\newtheorem{thm}{Theorem}[section]
\newcommand*{\QEDB}{\hfill\ensuremath{\square}}%
\theoremstyle{definition}
\newtheorem{defn}[thm]{Definition}
\newtheorem{lem}[thm]{Lemma}
\newtheorem{cor}[thm]{Corollary}
\newtheorem{rem}[thm]{Remark}
\theoremstyle{remark}
\newtheorem*{notn}{Notation}
\renewcommand{\P}{\mathbb{P}}
\title{Connectivity of the k-out Hypercube}
\author{ Michael Anastos\footnote{email:manastos@andrew.cmu.edu}  \vspace{5mm}
\\Department of Mathematical Sciences,
\\Carnegie Mellon University,
\\Pittsburgh PA 15213.
\date{}}
\begin{document}
\maketitle
\nocite{*}

\begin{abstract}
In this paper we study  the connectivity properties of the random subgraph of the $n$-cube generated by the $k$-out model and denoted by $Q^n(k)$. Let $k$ be an integer, $1\leq k \leq n-1$. We let $Q^n(k)$ be the graph that is generated by independently including for every $v\in V(Q^n)$ a set of $k$ distinct edges chosen uniformly from all the $\binom{n}{k}$ sets of distinct edges that are incident to $v$.  
We study connectivity the properties of $Q^n(k)$ as $k$ varies. 
We show that w.h.p.\@\footnote{we say that a sequence of events $\{\mathcal{E}_n\}$  holds with high probability (w.h.p.\@) or equivalency almost surely if $\P(\mathcal{E}_n)\to  1 $ as $n\to \infty$. } $Q^n(1)$ does not contain a giant component i.e.\@ a component that  spans $\Omega(2^n)$ vertices. Thereafter we show that such a component emerges when $k=2$. In addition the giant component spans all but $o(2^n)$ vertices  and hence it is unique. We then establish the connectivity threshold found at $k_0= \log_2 n -2\log_2\log_2 n $. The threshold is sharp in the sense that  $Q^n(\lfloor k_0\rfloor )$ is disconnected but $Q^n(\lceil k_0\rceil+1)$ is connected  w.h.p. Furthermore we show that   w.h.p.\@ $Q^n(k)$ is $k$-connected
 for every $k\geq \lceil k_0\rceil+1$.
\end{abstract}

\section{Introduction}

The n-dimensional cube, denoted by $Q^n$, is the graph with vertex set  $V=\{0,1\}^n$ in which two vertices are connected if and only if  they differ into precisely one coordinate. Clearly $Q^n$ is an $n$-regular bipartite graph on $2^n$ vertices. In this paper we study the random subgraph of the $n$-cube generated by the $k$-out model and denoted by $Q^n(k)$.
Let $k$ be an integer, $1\leq k \leq n-1$. We let $Q^n(k)$ be the graph that is generated by independently including for every $v\in V(Q^n)$ a set of $k$ distinct edges chosen uniformly from all the $\binom{n}{k}$ sets of distinct edges that are incident to $v$.  

Random subgraphs of $Q^n$ can be generated in various ways. 
The most usual way to generate such  graphs is either using the $G(Q^n,p)$ model or the $(Q^n)_t$ random process. In the $G(Q^n,p)$ model every edge of $Q^n$ is included independently with probability $0<p<1$. On the other hand the random process $(Q^n)_t$ is generated by starting with $(Q^n)_0$, the empty graph on $V$, and
extending $(Q^n)_i$ to $(Q^n)_{i+1}$ by adding to $(Q^n)_i$ an edge from $Q^n$,
 that is not currently present, uniformly at random. Various results on connectivity have been establish in both models. Burtin \cite{Burtin} was the first to study the connectivity of $G(Q^n,p)$. He proved that $G(Q^n,p)$ connected w.h.p.\@ when $p>\frac{1}{2}$ and is disconnected when $p<\frac{1}{2}$. His result was sharped by Erd\"os and Spencer who also conjectured that if $p\geq \frac{1+\epsilon}{n}$, $\epsilon>0$, then $G(Q^n,p)$  almost surely has a giant component. Their conjecture was  verified by Ajtai, 
 Koml\'os and Szemer\'edi \cite{Ajtai1982}. The connectivity of the random process $(Q^n)_t$ was studied by Bollob\'as, Kohayakawa and {\L}uczak \cite{Bollobas2},\cite{Bollobas1}. They established the following result. Let $\ell=O(1)$, and $\tau_\ell=\min\{t\in[n2^n]: \delta_t\geq \ell\}$. Then w.h.p\@ $(Q^n)_{\tau_\ell}$ is $\ell$-connected. Here by $\delta_t$ we denote the minimum degree of $(Q^n)_t$. Furthermore we say that a graph is  $\ell$-connected if it has more than $\ell$ vertices and remains connected whenever fewer than $\ell$ vertices are removed.
 
Observe that $\ell$-connectivity requires that the minimum degree of a graph is at least $\ell$. In both of the above models one has to wait until the minimum degree is $\ell$. Once this requirement is fulfilled  then the graph is $\ell$-connected w.h.p. One is therefore interested in models of a random graph which guarantee a certain minimum degree while not having too many edges. The $k$-out model meets this requirement. 

There have already been studies on connectivity properties of random graphs that are generated by the k-out model. For an arbitrary graph $G$ let $G(k)$ denote the random subgraph of $G$ that is generated by the $k$-out model, $1\leq k\leq \delta(G)$ (here $\delta(G)$ denotes the minimum degree of $G$). In the case that $G$ is the compete graph on $n$ vertices $K_n$ the following are known to hold w.h.p.\@ (see \cite{Alan}). First $K_n(1)$ is 
disconnected. Then $K_n(2)$ is connected, a proof of which can been found in the Scottish book \cite{BLMS:BLMS0265}. Furthermore Fenner and Frieze \cite{Fenner1982OnTC} show that for $k\geq 2$ we have that $K_n(k)$ is $k$-connected. The last theorem has been recently generalized by Frieze and Johansson \cite{Johansson} in the case where $k=O(1)$. They showed that for an arbitrary graph $G$ of minimum degree $\delta(G)\geq \big( \frac{1}{2}+\epsilon\big)n$ we have that the random graph $G(k)$ is $k$-connected for $2\leq k =O(1)$.     

In this paper we study connectivity properties of $Q^n(k)$  as $n\to\infty$. As we vary $k$ we ask whether some specific connectivity properties hold. Our results are summarized in the three theorems given below.  

\begin{thm}\label{first}
W.h.p.\@ $Q^n(1)$ does not contain a component spanning $\Omega(2^n)$ vertices.
\end{thm}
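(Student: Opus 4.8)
The plan is a first/second‑moment argument resting on the following structural remark. Since each vertex $v$ includes exactly one incident edge — the edge to $f(v)\in N_{Q^n}(v)$ — the graph $Q^n(1)$ has at most $2^n$ edges and is the undirected graph underlying the functional digraph of the random map $f\colon V\to V$. In any functional digraph the connected component of a vertex is its \emph{basin}, and one checks, by a short induction along a path of $Q^n(1)$, that $w$ and $v_0$ lie in the same component if and only if the forward orbits $O(x):=\{x,f(x),f^2(x),\dots\}$ meet; equivalently, $f^{\,i}(w)=f^{\,j}(v_0)$ for some $i,j\ge 0$. Hence $|C(v_0)|=\#\{w:O(w)\cap O(v_0)\neq\varnothing\}$, and I would bound $\mathbb{E}\,|C(v_0)|$ and then deduce the theorem from the identity $\sum_{v\in V}|C(v)|=\sum_{\text{components }C}|C|^2\ge M^2$, where $M$ is the order of the largest component of $Q^n(1)$.

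The decisive point — and the reason $Q^n$ behaves differently from $K_n$, for which $K_n(1)$ \emph{does} contain a component of linear order — is that forward orbits in $Q^n(1)$ are short. As long as the orbit $v_0,f(v_0),f^2(v_0),\dots$ has not yet revisited a vertex it is a simple random walk on $Q^n$, whose $k$‑th step flips a uniformly random coordinate $J_k$ independently of the others. If $J_{2i-1}=J_{2i}$ for some $i$, these two flips cancel and the orbit has already self‑intersected; since the events $\{J_{2i-1}=J_{2i}\}$, $i\ge 1$, are independent and each has probability $1/n$, the orbit has more than $L:=n^{3}$ vertices with probability at most $\bigl(1-\tfrac1n\bigr)^{\lfloor L/2\rfloor}\le e^{-L/(2n)}=e^{-n^{2}/2}$. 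A union bound over the $2^{n}$ vertices then shows that the event $\mathcal A:=\{\text{every forward orbit of }f\text{ has at most }L\text{ vertices}\}$ occurs with probability $1-o(1)$.

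Now work on the event $\mathcal A$. If $w\in C(v_0)$, then $O(v_0)$ and $O(w)$ meet at a first common vertex $p$, reached from $v_0$ by iterating $f$ some $i\le L$ times and from $w$ by iterating $f$ some $j\le L$ times, with the two resulting $f$‑paths simple and vertex‑disjoint apart from $p$. A prescribed such pair of $f$‑paths is realised with probability exactly $n^{-(i+j)}$, the disjointness making the two path‑events independent. Union‑bounding over $p$, over $i,j\le L$ and over the two paths, carrying out the sum over $w$ first (which contributes $\sum_{w}\#\{\text{length-}j\text{ walks }w\to p\}=\#\{\text{length-}j\text{ walks from }p\}\le n^{j}$, cancelling $n^{-j}$) and then the sum over $p$ (which contributes $\sum_{p}\#\{\text{length-}i\text{ walks }v_0\to p\}\le n^{i}$, cancelling $n^{-i}$), one is left with
\[
\mathbb{E}\bigl[\,|C(v_0)|\,\mathbf{1}_{\mathcal A}\,\bigr]\ \le\ \sum_{i=0}^{L}\ \sum_{j=0}^{L}\ 1\ =\ (L+1)^{2}\ =\ O(n^{6}).
\]
By vertex‑transitivity of $Q^n$ this gives $\mathbb{E}\bigl[M^{2}\mathbf{1}_{\mathcal A}\bigr]\le\mathbb{E}\bigl[\sum_{v\in V}|C(v)|\,\mathbf{1}_{\mathcal A}\bigr]=2^{n}\,\mathbb{E}\bigl[|C(v_0)|\,\mathbf{1}_{\mathcal A}\bigr]=O\bigl(2^{n}n^{6}\bigr)$, so that, for every fixed $\epsilon>0$, Markov's inequality yields
\[
\P\bigl(M\ge\epsilon 2^{n}\bigr)\ \le\ \P(\mathcal A^{c})+\P\bigl(M^{2}\mathbf{1}_{\mathcal A}\ge\epsilon^{2}2^{2n}\bigr)\ \le\ o(1)+\frac{O(n^{6})}{\epsilon^{2}\,2^{n}}\ =\ o(1),
\]
which is precisely the assertion of the theorem.

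The step I expect to require the most care is the orbit‑length estimate of the second paragraph: one must justify that, up to its first repetition, the orbit is genuinely a coordinate‑flip random walk with fresh independent increments, and then extract the bound $\bigl(1-\tfrac1n\bigr)^{\lfloor L/2\rfloor}$ on the probability of no self‑intersection within $L$ steps; one also needs $L$ to lie in the window $\omega(n^{2})\le L\le o(2^{n/2})$, which $L=n^{3}$ meets. Everything afterwards — the union bound over the meeting vertex and the two orbit segments, and the second‑moment/Markov conclusion — is routine. I note that the same computation in fact gives more: with probability $1-o(1)$ the largest component of $Q^n(1)$ has only $O\bigl(2^{n/2}\,\mathrm{poly}(n)\bigr)$ vertices.
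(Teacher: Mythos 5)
Your proposal is correct and follows essentially the same route as the paper: a union bound showing all forward $f$-orbits are short (the paper uses length $n^2$, you use $n^3$), then a first-moment count of pairs of vertices whose orbits meet at a common vertex via two vertex-disjoint, hence independent, directed paths, concluded with $M^2\le\sum_{C}|C|^2$ and Markov. The only cosmetic difference is that you bound the number of candidate paths by summing over all walks in $Q^n$ weighted by their probability $n^{-(i+j)}$, whereas the paper averages over the meeting vertex using that $G_D$ has at most $n^2+1$ realized di-paths out of each vertex; both give the same $\mathrm{poly}(n)\cdot 2^n$ bound.
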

\begin{thm}\label{second}
W.h.p.\@ $Q^n(2)$ contains a unique giant component that spans all but $o(2^n)$ vertices.
\end{thm}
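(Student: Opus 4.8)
The plan is to reduce Theorem~\ref{second} to two statements. Fix a threshold $L:=2^{\lfloor\sqrt n/3\rfloor}$ (any $L$ with $1\ll\log_2 L\ll\sqrt n$ works below). I will show: \textbf{(A)} w.h.p.\@ the number of vertices of $Q^n(2)$ that lie in components of size at most $L$ is $o(2^n)$; and \textbf{(B)} w.h.p.\@ $Q^n(2)$ has at most one component of size larger than $L$. Together these give the theorem: by (A) at least $2^n-o(2^n)$ vertices lie in components of size $>L$, so one such component exists, and by (B) it is unique and hence spans $2^n-o(2^n)$ vertices, while every other component has size $\le L=o(2^n)$. So all the content is in (A) --- a first moment bound that exploits the minimum-degree-$2$ property --- and in (B), the uniqueness of the giant, where the geometry of $Q^n$ enters.

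For (A) I would use the first moment method. If a vertex $v$ lies in a component $C$ with $|C|=s$, then every $u\in C$ must have selected both of its edges from among the $d_C(u)$ edges of $Q^n$ joining $u$ to $C$, and since the selections at distinct vertices are independent,
\[
\P\bigl(C\text{ is a union of components of }Q^n(2)\bigr)=\prod_{u\in C}\frac{\binom{d_C(u)}{2}}{\binom n2}.
\]
Now $\sum_{u\in C}d_C(u)=2e(C)$, where $e(C)$ is the number of edges of $Q^n$ induced by $C$, and the edge-isoperimetric inequality for $Q^n$ (subcubes are extremal) gives $e(C)\le\tfrac12 s\log_2 s$; since $t\mapsto\log\binom t2$ is concave and $t\mapsto\binom t2$ is increasing on $[1,\infty)$, Jensen's inequality combined with $\tfrac1s\sum_{u\in C}d_C(u)=2e(C)/s\le\log_2 s$ gives $\prod_{u\in C}\binom{d_C(u)}{2}\le\bigl(\tfrac12(\log_2 s)^2\bigr)^{s}$, so the probability above is at most $\bigl(2(\log_2 s)^2/n^2\bigr)^{s}$. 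Multiplying by $2^n(en)^{s}$ --- a standard bound on the number of connected vertex subsets of $Q^n$ of size $s$ --- and summing over $s$ and $v$,
\[
\mathbb{E}\bigl[\#\{v:\,|\mathrm{comp}(v)|\le L\}\bigr]\le 2^n\sum_{s=4}^{L}s\Bigl(\frac{2e(\log_2 s)^2}{n}\Bigr)^{s}=2^n\cdot O(n^{-4})=o(2^n),
\]
since for $4\le s\le L$ the summand decreases geometrically in $s$, so the sum is dominated by its $s=4$ term; Markov's inequality then gives (A). (The lower limit $s=4$ is because $Q^n(2)$ has minimum degree $2$, so its smallest components are $4$-cycles; a second-moment computation in fact shows that $\Theta(2^n/n^6)$ vertices really do lie in $4$-cycle components --- consistent with $o(2^n)$, and the reason connectivity fails already at $k=2$.)

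The hard part is (B), and I do not expect a union bound to work there: $Q^n$ has about $n^{s}$ connected subsets of size $s$, while the probability that all $2|\partial C|$ half-edge selections on the edge boundary of a set $C$ point inward is only about $\exp(-2|\partial C|/n)\ge\exp(-2|C|)$ (as $|\partial C|\le n|C|$ always), so the entropy of the ``bad cut'' can never be paid. My plan is instead a two-round (``sprinkling'') argument: reserve a set $S$ of coordinate directions and, in the first round, reveal enough of the randomness to fix the partition of $Q^n$ into the components of a stochastically dominated sub-model that never uses a direction in $S$ (a convenient choice of first round is to expose $Q^n(1)$, all of whose components are $o(2^n)$ by Theorem~\ref{first}); then, in the second round, reveal the remaining randomness and show it connects the large pieces. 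The structural heart --- and the main obstacle --- is to show that any component of size $>L$ produced by the first round is spread throughout the cube (it meets a positive fraction of every subcube obtained by fixing the $S$-coordinates); this ought to follow from the vertex-isoperimetric / expansion properties of $Q^n$ together with (A), and it immediately forces any such component to have size $\Omega(2^n)$, so that there are only $O(1)$ of them and any two of them have $\Omega(2^n)$ joining edges (here using (A) again, since the $o(2^n)$ vertices not in large components cannot absorb all of a linear-sized edge boundary). The second round then merges them w.h.p. The genuinely delicate points are: realizing a two-round exposure inside the $k$-out model, where the per-vertex budget is fixed at $2$ and ``holding back randomness'' must be done via a dominated model or careful conditioning; and proving the spread-out property (equivalently, that no component has size between $L$ and $\Omega(2^n)$) at the required quantitative level, which is where I expect the bulk of the work to lie.
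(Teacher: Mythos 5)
Part (A) of your proposal is correct and essentially complete: the first-moment bound combining the edge-isoperimetric inequality, concavity/AM--GM, and the $(en)^s$ count of connected subsets does show that w.h.p.\@ only $o(2^n)$ vertices of $Q^n(2)$ lie in components of size at most $L=2^{\lfloor\sqrt n/3\rfloor}$ (it is the $k=2$ instance of the computation the paper runs in Lemma \ref{notmedium} for $k\approx\log_2 n$; the exponent $s(k-1)=s$ is exactly why the method caps out at $\log_2 s=o(\sqrt n)$, as you observe). The reduction of the theorem to (A) plus (B) is also sound. The difficulty is that (B) --- at most one component of size $>L$ --- is where the entire content of the theorem lives, and your proposal does not prove it: you correctly diagnose that a union bound over connected vertex subsets cannot pay the entropy of a large cut, you propose sprinkling on top of $Q^n(1)$, and you then defer the decisive step (``no component of size between $L$ and $\Omega(2^n)$'', equivalently the spread-out property) as ``where I expect the bulk of the work to lie''. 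As written that step is also internally inconsistent: the components ``produced by the first round'' $Q^n(1)$ all have size $O(n^3 2^{n/2})=o(2^n)$ (this is what the paper's proof of Theorem \ref{first} actually yields), so they are neither spread through the cube nor of size $\Omega(2^n)$; the objects you need to control are components of $Q^n(2)$, and arguing that they are few because they are large because there are no intermediate ones is circular without a new input.

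The missing idea, which is the paper's, is to take the union bound not over vertex subsets but over \emph{bipartitions of the components of the previously revealed graph}, of which there are only $2^{N}$ where $N$ is the number of those components. If every component of the final graph had size at most $(1-\epsilon)2^n$, isoperimetry forces $\Omega(\epsilon 2^n)$ edges of $Q^n$ to join distinct components, a random bipartition of the components retains half of them in expectation, and a fixed bipartition with $m$ crossing edges is missed by the sprinkled choices with probability at most $e^{-m/n}$. Beating $2^{N}$ requires $N\ll \epsilon 2^n/n$, and $Q^n(1)$ has $(1+o(1))2^{n-1}/n$ components --- too many for any $\epsilon\le 1$. The paper therefore inserts an intermediate round ($Q^n(1.5)$: only the even-parity vertices reveal their second choice) and shows, using the cycle structure of $Q^n(1)$ together with the same partition argument, that the component count drops to $O\big(2^n/(n\log_2\log_2 n)\big)$; only then does the final sprinkle (the odd-parity vertices) close the argument with $\epsilon=\Theta(1/\log_2\log_2 n)$, giving a component of size $(1-o(1))2^n$ directly. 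This bookkeeping --- two sprinkling rounds indexed by the bipartition of $V$ into parity classes, with the union bound taken over partitions of already-determined components rather than over subsets of $V$ --- is precisely the mechanism your plan for (B) is missing, so the proposal as it stands has a genuine gap at the uniqueness step.
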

\begin{thm}\label{third}
Let $k_0= \log_2 n -2\log_2\log_2 n $. Then w.h.p.\@ the following hold,
\begin{enumerate}
    \item if $k\leq \lfloor k_0 \rfloor $ then $Q^n(k)$ is disconnected,
    \item if $k\geq \lceil k_0 \rceil+1$ then $Q^n(k)$ is $k$-connected.
\end{enumerate}
\end{thm}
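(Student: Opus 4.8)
The plan is to handle the two statements separately; in both, the extremal obstruction is a $k$-dimensional subcube that forms its own connected component, and $k_0$ is exactly the point where the first moment of such subcubes changes behaviour. For the disconnectivity statement ($k\le\lfloor k_0\rfloor$): for a $k$-subcube $C$ let $\mathcal A_C$ be the event that $C$ is a component of $Q^n(k)$; since each vertex of $C$ has exactly $k$ neighbours inside $C$, $\mathcal A_C$ holds iff every vertex of $C$ selects precisely the $k$ coordinates spanning $C$ and every external neighbour of $C$ misses its single edge into $C$, so $\P(\mathcal A_C)=\binom nk^{-2^k}(1-k/n)^{2^k(n-k)}$. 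With $X=\sum_C\mathbf{1}[\mathcal A_C]$ and $\binom nk2^{n-k}$ subcubes in all, one gets $\log_2\mathbb E X=(n-k)-(2^k-1)\log_2\binom nk+o(n)$, which tends to $+\infty$ for $k\le\lfloor k_0\rfloor$ and to $-\infty$ for $k\ge\lceil k_0\rceil+1$ — this is the computation that pins down $k_0=\log_2 n-2\log_2\log_2 n$ and explains the choice of endpoints in the theorem. A second moment argument then completes it: two distinct subcube-components must be parallel and disjoint, and $\mathcal A_C,\mathcal A_{C'}$ are independent unless $C$ and $C'$ share an external neighbour, so $\mathbb E[X^2]=(1+o(1))(\mathbb E X)^2+\mathbb E X$ and Chebyshev gives $X\ge1$ w.h.p. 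The only delicate point is checking that close pairs $(C,C')$ contribute a lower-order term.

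For the $k$-connectivity statement ($k\ge\lceil k_0\rceil+1$), suppose $Q^n(k)$ is not $k$-connected: there are $S$ with $|S|\le k-1$ and a component $A$ of $Q^n(k)-S$ with $1\le|A|\le 2^{n-1}$; put $B=V\setminus(S\cup A)$. Then $A$ is connected in $Q^n$, and for $A$ to be a component we need both (i) every $v\in A$ selects only edges inside $A\cup S$, and (ii) no $v\in B$ selects an edge into $A$. Every $v\in A$ has at least $k$ neighbours in $A\cup S$, so summing degrees in $Q^n[A\cup S]$ and invoking the edge-isoperimetric inequality of the cube (a set of size $w$ spans at most $\tfrac12 w\log_2 w$ edges) forces $|A\cup S|(k-\log_2|A\cup S|)<k^2$, whence $|A|\ge(1-o(1))2^k$. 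I would split on $|A|$ at a threshold $T$, say $T=2^{(\log_2 n)^2}$.

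In the range $(1-o(1))2^k\le|A|\le T$ one uses (i): writing $d_v=|N(v)\cap(A\cup S)|\le\min(n,|A|+|S|-1)$, the probability of (i) is at most $\prod_{v\in A}\binom{d_v}{k}/\binom nk^{|A|}$; bounding the number of connected sets $A$ of size $m$ by $2^n(en)^m$, the number of sets $S$ by $(mn)^k$, and — this matters in the window $m\approx2^k$ — using that a set of size $2^k$ inducing minimum degree $\ge k$ in $Q^n$ is a $k$-subcube, so the enumeration there reduces to counting subcubes, the resulting sum over $m\le T$ is governed by the same subcube term as in the disconnectivity estimate and hence tends to $0$ precisely because $k\ge\lceil k_0\rceil+1$.

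The range $|A|>T$, where both $A$ and $B$ are exponentially large, is the main obstacle. Here (ii) has probability at most $\exp(-\tfrac kn\,e_{Q^n}(A,B))$ with $e_{Q^n}(A,B)\ge e_{Q^n}(A,V\setminus A)-(k-1)n\ge|A|(n-\log_2|A|)-(k-1)n$ by edge-isoperimetry, but the naive count $2^n(en)^{|A|}$ of candidate sets dwarfs the reciprocal of this probability once $|A|$ nears $2^{n-1}$: the extremal sets there (the $(n-1)$-subcubes) number only $\Theta(n)$, yet near-extremal sets are abundant. The plan is to replace the naive count by a stability/enumeration input — the number of sets with edge-boundary at most $t$ is $2^{o(t)}$, since any set with edge-boundary within a constant factor of extremal must be close to a subcube and such sets are few — so that $\sum_t 2^{o(t)}2^{nk}e^{-kt/n}\to0$ over the dyadic scales $t\ge2^{n-1}$. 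Carrying out this isoperimetric bookkeeping for large $|A|$ is where essentially all the difficulty lies. Finally, note the argument yields $k$-connectivity directly rather than mere connectivity, as $S$ ranges over all sets of size $\le k-1$ throughout; the only price is the harmless factor $2^{nk}$ (or $(mn)^k$) counting the choices of $S$.
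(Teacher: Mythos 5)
Your treatment of part 1 (first and second moment on $k$-subcube components) is essentially the paper's proof of Lemma \ref{low}, except that the paper restricts to $k_0$-cubes lying at level $n/2$ to keep the counting clean; your correlation analysis for overlapping/adjacent subcubes is the same negative-correlation argument the paper uses. For part 2, your deterministic lower bound $|A|\geq(1-o(1))2^k$ via min-degree plus edge-isoperimetry is exactly the paper's Lemma \ref{notsmall} (and you do not actually need the subcube-rigidity refinement near $|A|\approx 2^k$: the paper's Lemma \ref{notmedium} shows the crude count $2^n(en)^m(2m)^k$ against the AM--GM degree bound already wins throughout $[n_s,2^{n/5}]$, because the deterministic bound pushes $|A|$ a constant factor $2^{0.9}$ past $2^{k_0}=n/\log_2^2 n$).

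The genuine gap is the range of large $|A|$, and it is twofold. First, the enumeration input you invoke --- ``the number of sets with edge-boundary at most $t$ is $2^{o(t)}$'' --- is asserted, not proved, and you concede it is where all the difficulty lies; no stability theorem of this form is cited or derived. Second, and more seriously, even if that lemma were granted it would not close the union bound: the failure probability is $\exp(-k\,e(A,B)/n)\approx 2^{-\Theta(t\log_2 n/n)}$ with $k\approx\log_2 n$, so you need the number of candidate sets with boundary $\approx t$ to be $2^{o(t\log n/n)}$, a much stronger requirement than $2^{o(t)}$ (since $\log n/n\to 0$, a count of, say, $2^{t/\log\log n}$ satisfies your stated bound but swamps the probability). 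At $t\approx 2^{n-1}$ even single-vertex perturbations of an $(n-1)$-subcube already produce $2^{\Theta(2^{n-1}\log_2 n/n)}$ sets of comparable boundary, so the margin, if it exists at all, is in the constant $\log_2 e$ --- far too delicate for an $o(\cdot)$ bookkeeping. The paper avoids this entirely by a different device: it exposes the $k$-th out-edges in two rounds, first for the vertices in small disconnectable sets of $G_0\sim Q^n(k-1)$ (a set $\mathcal{A}_0$ shown to have size at most $2^{n/10}$), proves that in the resulting $G_1$ every $(k-1)$-disconnectable set has size at least $2^{n/5}$, and then observes that any large disconnectable set of the final graph must be a \emph{union of components of $G_1-L$}, each of size at least $2^{n/5}$. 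This caps the number of candidate sets $T$ with $|T|\in[2^{jn/10},2^{(j+1)n/10}]$ at $2^{n2^{(j-1)n/10}}$, which is doubly-exponentially smaller than the reciprocal failure probability $\exp(2^{jn/10}/20n)$, and the union bound closes with room to spare. Without this (or a genuinely proved sharp isoperimetric-stability enumeration), your argument does not establish part 2.
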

The most surprisingly  feature of our results, as opposed to what someone might expect based on the results concerning $K_n(2)$, is that $Q^n(2)$ is not connected. Furthermore even though $Q^n(2)$ contains a giant component that spans all but $o(2^n)$ vertices we have that as $k$ increases $Q^n(k)$ persists on not being connected until $k$  passes $k_0$.  On the other hand, as is proved for $K_n(k)$, we are able to prove that if $Q^n(k)$ is connected then it is $k$-connected. 
\vspace{3mm}
\\The rest of the paper is split as follows.  In section 2 we give some notation and  preliminary results. Thereafter  in Sections 3 and 4 we give the proofs of Theorem \ref{first} and \ref{second} respectively. We continue by proving the first part of Theorem \ref{third}  in Section 5. We  give the proof of the second part  in section 6. We close with section 7.

\section{Notation-Preliminaries}
In this section we give some definitions and basic results that are
used throughout the paper. We use $V$ and $E$ in order to denote $V(Q^n)$ and $E(Q^n)$ respectively. 
\begin{defn}
We say that a graph $G$ is  $\ell$-connected if it has more than $\ell$ vertices and remains connected whenever fewer than $\ell$ vertices are removed.
\end{defn}
\begin{notn}
For $v\in V$ and $A\subset V$  we set $E(v,A):=\{vw \in E:  w\in A\}$.
Furthermore  for $A,B \subset V$ we set $E(A,B):= \{vw\in E(G): v\in A, w\in B\}$.
Finally we denote the quantities $|E(v,A)|$ and $|E(A,B)|$ by $d(v,A)$ and $d(A,B)$ respectively. 
\end{notn}
In various places we are going use the following inequalities (see \cite{Ch0}, \cite{Mc}). By $Bin(n,p)$ we  mean the $Binomial(n,p)$ random variable.
\begin{lem}\label{con1}
(\emph{Chernoff's bounds})  Let $X$ be distributed as a $Bin(n,p)$ random variable. Then for any 
$0 \leq \epsilon  \leq 1$,
$$\mathbb{P}\big(X\geq (1+\epsilon)np\big)\leq e^{-\frac{\epsilon^2np}{3}} $$
\end{lem}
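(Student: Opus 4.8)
The plan is to use the exponential moment method (the standard Chernoff trick). Writing $X = \sum_{i=1}^n X_i$ as a sum of i.i.d.\ Bernoulli$(p)$ indicators, I would fix a parameter $t>0$ and apply Markov's inequality to $e^{tX}$:
$$\mathbb{P}\big(X \geq (1+\epsilon)np\big) = \mathbb{P}\big(e^{tX} \geq e^{t(1+\epsilon)np}\big) \leq e^{-t(1+\epsilon)np}\,\mathbb{E}\big[e^{tX}\big].$$
By independence, $\mathbb{E}[e^{tX}] = \prod_{i=1}^n \mathbb{E}[e^{tX_i}] = \big(1 + p(e^t-1)\big)^n$, and using $1+x \leq e^x$ (with $x = p(e^t-1)\geq 0$) this is at most $e^{np(e^t-1)}$. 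Hence the right-hand side above is bounded by $\exp\!\big(np(e^t - 1 - t(1+\epsilon))\big)$.

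Next I would optimise over $t$. The function $g(t) = e^t - 1 - t(1+\epsilon)$ is minimised at $t = \ln(1+\epsilon)\geq 0$, and there $g$ equals $\epsilon - (1+\epsilon)\ln(1+\epsilon)$. This yields
$$\mathbb{P}\big(X \geq (1+\epsilon)np\big) \leq \exp\!\Big(np\big(\epsilon - (1+\epsilon)\ln(1+\epsilon)\big)\Big) = \left(\frac{e^\epsilon}{(1+\epsilon)^{1+\epsilon}}\right)^{np}.$$
It then remains only to verify the elementary real-variable inequality $\epsilon - (1+\epsilon)\ln(1+\epsilon) \leq -\epsilon^2/3$ for all $0 \leq \epsilon \leq 1$, since plugging it into the display gives exactly the stated bound $e^{-\epsilon^2 np/3}$.

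For this last step I would set $f(\epsilon) := \epsilon - (1+\epsilon)\ln(1+\epsilon) + \epsilon^2/3$ and show $f \leq 0$ on $[0,1]$. One computes $f(0)=0$, $f'(\epsilon) = \tfrac{2\epsilon}{3} - \ln(1+\epsilon)$ so $f'(0)=0$, and $f''(\epsilon) = \tfrac{2}{3} - \tfrac{1}{1+\epsilon}$, which is negative on $[0,\tfrac12)$ and positive on $(\tfrac12,1]$. Thus $f'$ decreases and then increases on $[0,1]$; since $f'(0)=0$ and $f'(1) = \tfrac23 - \ln 2 < 0$, we get $f' \leq 0$ throughout $[0,1]$, so $f$ is non-increasing and $f(\epsilon) \leq f(0)=0$.

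The only mildly delicate point is this final calculus check — in particular confirming that $f'$ stays non-positive all the way to $\epsilon=1$ even though $f''$ changes sign at $\epsilon=\tfrac12$; everything preceding it is routine. Alternatively one could simply quote the bound $\tfrac{e^\epsilon}{(1+\epsilon)^{1+\epsilon}} \leq e^{-\epsilon^2/3}$ (valid for $0\le\epsilon\le 1$) from a standard reference such as \cite{Ch0} or \cite{Mc}, but I would prefer to include the short self-contained verification above.
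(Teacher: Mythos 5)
Your proposal is correct. Note, though, that the paper does not prove this lemma at all: it is stated as a standard tool and simply attributed to the references \cite{Ch0}, \cite{Mc}, so there is no in-paper argument to compare against. Your exponential-moment derivation is the canonical one, and the steps all check out: Markov applied to $e^{tX}$, the bound $\mathbb{E}[e^{tX}]=(1+p(e^t-1))^n\leq e^{np(e^t-1)}$, optimisation at $t=\ln(1+\epsilon)$ giving the exponent $np\big(\epsilon-(1+\epsilon)\ln(1+\epsilon)\big)$, and the calculus verification that $f(\epsilon)=\epsilon-(1+\epsilon)\ln(1+\epsilon)+\epsilon^2/3\leq 0$ on $[0,1]$. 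The last step is handled carefully and correctly: $f'(\epsilon)=\tfrac{2\epsilon}{3}-\ln(1+\epsilon)$ first decreases and then increases, with $f'(0)=0$ and $f'(1)=\tfrac{2}{3}-\ln 2<0$, so $f'\leq 0$ on all of $[0,1]$ and hence $f\leq f(0)=0$; this is exactly the point where a sloppier argument could go wrong, and you have closed it. The only remark is that, for the purposes of this paper, citing the bound (as the author does) would suffice, but your self-contained verification is a valid and complete alternative.
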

\begin{lem}\label{con2}
(\emph{McDiarmid's Inequality}) Let $X_1,X_2,...,X_n$ be independent random variables with $X_i$ taking values in $A_i$. Further, let 
$f:\prod_{i\in[n]} A_i \mapsto \mathbb{R}$ and assume there exist $c_1,...,c_n\in \mathbb{R}$
such that whenever $x,x'$ differ only in their $i$-th coordinate we have
\begin{align}\label{lip}
|f(x)-f(x')|\leq c_i.
\end{align}
Then $\forall \epsilon>0$,
$$\mathbb{P}\big[f-\mathbb{E}[f(x)]\geq \epsilon \big]\leq \exp\bigg\{ \frac{-2\epsilon^2}{{\sum}_{i=1}^{n}c_i^2} \bigg\} $$
\end{lem}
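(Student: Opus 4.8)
The plan is to prove this by the standard Doob martingale (bounded differences / Azuma--Hoeffding) argument. Work on the product space $\prod_{i\in[n]}A_i$ carrying the law of $(X_1,\dots,X_n)$, and let $\mathcal F_i=\sigma(X_1,\dots,X_i)$ with $\mathcal F_0$ trivial. Set $Z_i=\mathbb E[f(X)\mid\mathcal F_i]$, so that $Z_0=\mathbb E[f(X)]$, $Z_n=f(X)$, and the increments $D_i=Z_i-Z_{i-1}$ form a martingale difference sequence: $\mathbb E[D_i\mid\mathcal F_{i-1}]=0$ and $f(X)-\mathbb E[f(X)]=\sum_{i=1}^n D_i$. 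For $\lambda>0$, Markov's inequality applied to $e^{\lambda(\cdot)}$ gives
\[
\mathbb P\big(f(X)-\mathbb E[f(X)]\ge\epsilon\big)\le e^{-\lambda\epsilon}\,\mathbb E\Big[\exp\Big(\lambda\textstyle\sum_{i=1}^n D_i\Big)\Big],
\]
so the whole task reduces to bounding the conditional moment generating function of each $D_i$ and then optimizing over $\lambda$.

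The first key step is to show that, conditionally on $\mathcal F_{i-1}$, the increment $D_i$ is a centered random variable taking values in an interval of width at most $c_i$. Here independence of the coordinates is essential: conditioning on the realized values $x_1,\dots,x_{i-1}$, one has $Z_{i-1}=\mathbb E\,f(x_1,\dots,x_{i-1},X_i,\dots,X_n)$ and $Z_i=g(X_i)$ where $g(y):=\mathbb E\,f(x_1,\dots,x_{i-1},y,X_{i+1},\dots,X_n)$. For any $y,y'$, the integrands $f(x_1,\dots,x_{i-1},y,X_{i+1},\dots,X_n)$ and $f(x_1,\dots,x_{i-1},y',X_{i+1},\dots,X_n)$ differ only in coordinate $i$, so \eqref{lip} yields $|g(y)-g(y')|\le c_i$ pointwise; hence $\sup_y g-\inf_y g\le c_i$, and $D_i=g(X_i)-\mathbb E[g(X_i)]$ lies (conditionally on $\mathcal F_{i-1}$) in an interval of length $\le c_i$ with conditional mean $0$, the endpoints of that interval being $\mathcal F_{i-1}$-measurable.

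The second key step is Hoeffding's lemma: if $Y$ has $\mathbb E[Y]=0$ and takes values in an interval of length $\ell$, then $\mathbb E[e^{\lambda Y}]\le e^{\lambda^2\ell^2/8}$ for every $\lambda$ (which follows because $\psi(\lambda):=\log\mathbb E[e^{\lambda Y}]$ satisfies $\psi(0)=\psi'(0)=0$ and $\psi''\le\ell^2/4$). Applying it conditionally gives $\mathbb E[e^{\lambda D_i}\mid\mathcal F_{i-1}]\le e^{\lambda^2 c_i^2/8}$. Now peel off the increments one at a time with the tower property: writing $S_j=\sum_{i\le j}D_i$,
\[
\mathbb E[e^{\lambda S_n}]=\mathbb E\big[e^{\lambda S_{n-1}}\,\mathbb E[e^{\lambda D_n}\mid\mathcal F_{n-1}]\big]\le e^{\lambda^2 c_n^2/8}\,\mathbb E[e^{\lambda S_{n-1}}]\le\cdots\le\exp\Big(\tfrac{\lambda^2}{8}\textstyle\sum_{i=1}^n c_i^2\Big).
\]
Substituting into the Markov bound and choosing $\lambda=4\epsilon/\sum_{i=1}^n c_i^2$ to minimize $-\lambda\epsilon+\tfrac{\lambda^2}{8}\sum_i c_i^2$ gives $\mathbb P\big(f(X)-\mathbb E[f(X)]\ge\epsilon\big)\le\exp\{-2\epsilon^2/\sum_{i=1}^n c_i^2\}$, as claimed.

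The only genuinely delicate point I anticipate is the first step: making precise that conditioning on $\mathcal F_{i-1}$ collapses $D_i$ into a centered, bounded-range function of $X_i$ alone. This is exactly where the independence of $X_1,\dots,X_n$ enters, and it is what lets the hypothesis \eqref{lip} — a statement about perturbing one coordinate of a fixed deterministic point — be upgraded to a bound on a conditional expectation. Hoeffding's lemma is a short self-contained convexity estimate that I would either cite (as the paper already does) or prove in two lines, and the martingale telescoping together with the one-variable optimization over $\lambda$ is entirely routine.
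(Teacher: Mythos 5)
Your proof is correct: the Doob martingale decomposition, the use of independence to show each conditional increment is centered and confined to an interval of length $c_i$, Hoeffding's lemma, and the optimization over $\lambda$ are exactly the standard bounded-differences argument, and the constants work out to $\exp\{-2\epsilon^2/\sum_i c_i^2\}$. The paper does not prove this lemma at all — it is quoted with a citation to McDiarmid's survey — and your argument is essentially the canonical proof given in that source, so there is nothing to reconcile beyond noting that you have supplied the proof the paper omits.
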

\begin{rem}
In the setup above if $f$ satisfies  condition \eqref{lip} then so does $-f$. Therefore by applying McDiarmid's Inequality twice, once with $f$ and once with $-f$, we get that $\forall \beta >0$
$$\mathbb{P}\bigg\{f\notin \big[(1-\beta)\mathbb{E}\big(f(x)\big), (1+\beta)\mathbb{E}\big(f(x)\big)\big] \bigg\}\leq 2\exp\bigg\{ \frac{-2\big[\beta \mathbb{E}\big(f(x)\big)\big]^2}{{\sum}_{i}^{n}c_i^2} \bigg\} $$
\end{rem}
We will also use the following isoperimetric inequality,  see for example  Bollob\'as and Leader {\cite{Leader}}.
\begin{lem}\label{iso}
Let $A \subset V$. Then,
$$d(A,A)\leq \frac{|A|\log_2|A|}{2}.$$
Equivalently, since $d(A,V{ \setminus} A)=n|A|-2d(A,A)$,
we have
$$d(A,V{ \setminus} {A})\geq n|A|-|A|\log_2|A|.$$
\end{lem}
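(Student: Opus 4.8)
The statement is the classical edge-isoperimetric inequality for the cube, and the plan is to prove it by induction on the dimension $n$. For $n=1$ the cube $Q^1$ is a single edge, and one checks the inequality directly for the nonempty subsets $A$ (with the convention $0\log_2 0=0$): the cases $|A|\in\{0,1,2\}$ read $0\le 0$, $0\le 0$ and $1\le 1$. For the inductive step, split $Q^n=Q_0\dun Q_1$, where $Q_0$ (resp.\ $Q_1$) is the subcube spanned by the vertices whose last coordinate is $0$ (resp.\ $1$); each of $Q_0,Q_1$ is a copy of $Q^{n-1}$ and the edges between them form a perfect matching. Write $A_i=A\cap Q_i$ and $a_i=|A_i|$, so $a_0+a_1=|A|$. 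Every edge inside $A$ either lies inside $A_0$, or inside $A_1$, or is a matching edge joining $A_0$ to $A_1$; since the matching is perfect, the number of such matching edges inside $A$ is at most $\min\{a_0,a_1\}$. Applying the induction hypothesis to $A_0\subset Q^{n-1}$ and $A_1\subset Q^{n-1}$ then gives
\[
d(A,A)\ \le\ \tfrac12 a_0\log_2 a_0+\tfrac12 a_1\log_2 a_1+\min\{a_0,a_1\}.
\]

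It remains to verify the elementary inequality $\tfrac12 a_0\log_2 a_0+\tfrac12 a_1\log_2 a_1+\min\{a_0,a_1\}\le\tfrac12(a_0+a_1)\log_2(a_0+a_1)$ for all integers $a_0,a_1\ge 0$ (again with $0\log_2 0=0$; if one of the $a_i$ is $0$ this is an equality, and if $a_0+a_1=0$ the whole statement is trivial). Assuming without loss of generality $a_1\le a_0$ and writing $t=a_1/(a_0+a_1)\in(0,\tfrac12]$, dividing through by $\tfrac12(a_0+a_1)$ reduces the inequality to $g(t):=(1-t)\log_2(1-t)+t\log_2 t+2t\le 0$ on $(0,\tfrac12]$. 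Here $g(0^+)=0$, $g(\tfrac12)=0$, and $g''(t)=\tfrac1{\ln 2}\bigl(\tfrac1t+\tfrac1{1-t}\bigr)>0$, so $g$ is convex and hence attains its maximum over $[0,\tfrac12]$ at an endpoint; thus $g\le 0$ there, which closes the induction and proves $d(A,A)\le\tfrac12|A|\log_2|A|$.

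For the equivalent form, count the $n|A|$ edge-endpoints incident to $A$ according to where the other endpoint lies: internal edges contribute $2$ and edges to $V\setminus A$ contribute $1$, giving the identity $n|A|=2\,d(A,A)+d(A,V\setminus A)$, so the lower bound $d(A,V\setminus A)\ge n|A|-|A|\log_2|A|$ follows by substitution. The only real content is the one-variable inequality $g\le 0$, which is routine once the convexity observation is made, so I do not anticipate a genuine obstacle. (If one prefers to avoid induction there is a short entropy alternative: let $X$ be uniform on $A$ and let $m_i$ be the number of edges of $Q^n$ inside $A$ in direction $i$, so $d(A,A)=\sum_i m_i$; conditioning on the remaining coordinates gives $H(X_i\mid X_{-i})=2m_i/|A|$, and combining $H(X)=\log_2|A|$ with Shearer's inequality $(n-1)H(X)\le\sum_i H(X_{-i})$ yields $\sum_i m_i\le\tfrac12|A|\log_2|A|$ at once.)
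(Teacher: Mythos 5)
Your proof is correct, but it supplies something the paper deliberately omits: the paper states Lemma \ref{iso} as a known edge-isoperimetric inequality and simply cites Bollob\'as and Leader for it, giving no proof of its own. Your argument is the standard inductive one: split $Q^n$ into the two subcubes determined by the last coordinate, bound the cross edges inside $A$ by $\min\{a_0,a_1\}$ since they form a perfect matching, invoke the induction hypothesis on $A_0,A_1$, and reduce everything to the one-variable inequality $g(t)=(1-t)\log_2(1-t)+t\log_2 t+2t\le 0$ on $(0,\tfrac12]$, which your convexity/endpoint argument settles correctly (note $g(0^+)=g(\tfrac12)=0$ and $g''>0$, so $g$ lies below the chord). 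The degree-counting identity $n|A|=2d(A,A)+d(A,V\setminus A)$ giving the second form is also right, and your entropy sketch via $H(X_i\mid X_{-i})=2m_i/|A|$ plus Han/Shearer is a legitimate alternative route to the same bound. So the comparison is simply this: the paper treats the lemma as imported background, while you give a self-contained elementary proof; your induction buys self-containedness at the cost of a page of routine verification, whereas the citation buys brevity and also points to the sharper compression-type results in Bollob\'as--Leader, of which this $\tfrac12|A|\log_2|A|$ bound is a weak but sufficient consequence for the paper's purposes.
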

\begin{rem}\label{iso2}
The function $f(x)=nx-x\log_2x$ has a unique stationary point which is a maximum. Therefore  $\forall a\leq b\in [0,2^n]$ if $A\subset V$ and $|A|\in [a,b]$ then Lemma \ref{iso} implies that $d(A,V{ \setminus} A)\geq \min\{ f(a), f(b)\}.$
\end{rem}
Finally we are also going to use the following two results. For a proof of Lemma \ref{trees1} in the case where  the underline graph has maximum degree $D$ see Knuth \cite{Knuth}.
\begin{lem}\label{trees1}
For $v\in V$ there are at most $\binom{sn}{s} / [(n-1)s+1]$ trees $T$ such that $|T|=s$ and $v\in V(T)$.
\end{lem}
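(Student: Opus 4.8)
The plan is to recognise this as the special case $D=n$ of a standard enumeration bound for graphs of maximum degree $D$ --- precisely the form attributed to Knuth just before the statement --- so that one route is simply to cite \cite{Knuth}. For a self-contained argument I would proceed as follows. Recall that an \emph{$n$-ary plane tree on $s$ nodes} is a rooted tree on $s$ nodes in which every node carries $n$ ordered child-slots, each slot being either empty or occupied by one of that node's children (so the non-empty slots of a node are in bijection with its children, the slot index recording position), and that the number of such trees equals the Fuss--Catalan number $\binom{sn}{s}/[(n-1)s+1]$ (a standard identity, provable e.g.\ by the cycle lemma of Dvoretzky and Motzkin applied to the preorder occupancy word of the tree; see also \cite{Knuth}).

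Fix $v$, and at every vertex $u$ of $Q^n$ label the $n$ incident edges by the coordinates $1,\dots,n$, so that the edge labelled $c$ joins $u$ to the neighbour $u^{(c)}$ obtained by flipping coordinate $c$. Given a subtree $T$ of $Q^n$ with $v\in V(T)$ and $|V(T)|=s$, regarded as rooted at $v$, I would associate to it the $n$-ary plane tree $\Phi(T)$ on $s$ nodes whose node set is $V(T)$, whose root is $v$, and in which, for a node $u$, the $c$-th child-slot holds $u^{(c)}$ when $u^{(c)}$ is a child of $u$ in the rooted tree $T$, and is empty otherwise. This is well defined: at the root all $n$ slots are available, while at any other vertex $u$ the slot labelled by the coordinate joining $u$ to its parent is necessarily empty. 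Moreover $\Phi$ is injective, since the shape of $\Phi(T)$ together with the fixed convention that its root is $v$ determines every vertex of $T$ (an occupied $c$-th child-slot at $u$ must hold $u^{(c)}$) and hence every edge of $T$.

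Combining the two observations, the number of trees $T$ with $|V(T)|=s$ and $v\in V(T)$ is at most the number of $n$-ary plane trees on $s$ nodes, namely $\binom{sn}{s}/[(n-1)s+1]$, which is the claim. I do not expect a genuine obstacle here; the one point that needs a little care is pinning down the \emph{exact} denominator $(n-1)s+1$ --- that is, recognising the relevant count as a Fuss--Catalan number rather than settling for a cruder bound such as $(en)^{s}$ --- and making sure the encoding faithfully records that the root of $T$ has $n$ usable directions whereas every other vertex has only $n-1$, the direction back to its parent being absorbed into an empty slot of $\Phi(T)$.
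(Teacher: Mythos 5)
Your proposal is correct. The paper gives no proof of this lemma, only the citation to Knuth for the maximum-degree-$D$ version; your argument — rooting $T$ at $v$, injecting it into the set of $n$-ary plane trees via the coordinate labels, and invoking the Fuss--Catalan count $\binom{sn}{s}/[(n-1)s+1]$ — is exactly the standard argument behind that reference, so it is essentially the same approach, written out in full.
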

\begin{cor}\label{trees}
For $v\in V$ there are at most $(en)^{s}$ sets $S$ such that (i) $v\in 
S$, (ii) $|S|=s$ and (iii) $G[S]$
is connected.
\end{cor}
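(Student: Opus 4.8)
The plan is to deduce Corollary~\ref{trees} directly from Lemma~\ref{trees1} together with a standard estimate on binomial coefficients. First I would observe that if $S\subseteq V$ satisfies (i)--(iii), then the connected graph $G[S]$ (here $G=Q^n$) admits at least one spanning tree $T$, and such a $T$ is a tree with vertex set $V(T)=S$; in particular $|V(T)|=s$ and $v\in V(T)$. Conversely, the vertex set of any tree $T$ with $|V(T)|=s$ and $v\in V(T)$ is a set $S$ satisfying (i)--(iii). Hence the map $T\mapsto V(T)$, defined on the family of trees $T$ that are subgraphs of $Q^n$ with $|V(T)|=s$ and $v\in V(T)$, is a surjection onto the family of sets $S$ we wish to count. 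A surjection cannot decrease cardinality, so the number of sets $S$ satisfying (i)--(iii) is at most the number of such trees $T$.

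Next I would apply Lemma~\ref{trees1}: since $Q^n$ has maximum degree $n$, the number of trees $T$ with $|V(T)|=s$ and $v\in V(T)$ is at most $\binom{sn}{s}/[(n-1)s+1]\le \binom{sn}{s}$. Finally, using the elementary bound $\binom{m}{\ell}\le (em/\ell)^{\ell}$ with $m=sn$ and $\ell=s$ gives $\binom{sn}{s}\le (en)^{s}$, which is exactly the claimed bound.

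I do not expect a genuine obstacle here; the only points that need a little care are (a) ensuring we are counting subsets of $V$ rather than isomorphism types or labelled copies of trees — the surjectivity argument takes care of this, and the fact that distinct spanning trees of the same $S$ collapse to one vertex set only strengthens the inequality — and (b) checking that Lemma~\ref{trees1} is invoked in the regime where the host graph has maximum degree exactly $n$, which holds for $Q^n$. One should also dispose of the trivial case $s=1$, where $S=\{v\}$ is the unique such set and $(en)^{1}\ge 1$.
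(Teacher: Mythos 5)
Your argument is correct and is essentially the paper's own proof: pass from connected sets to spanning trees, invoke Lemma~\ref{trees1}, and bound $\binom{sn}{s}\le (en)^s$. The surjection step you spell out is left implicit in the paper, but nothing of substance differs.
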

\begin{proof}
It follows directly from Lemma \ref{trees} and the following inequality,
\begin{align*}
&\binom{sn}{s} / [(n-1)s+1] \leq \binom{sn}{s}\leq\bigg(\frac{ens}{s}\bigg)^s=(en)^s.
\qedhere
\end{align*}
\end{proof}

\section{Structural properties of $Q^n(1)$}
We split this section into two parts.  In the first part we prove Theorem \ref{first}. Thereafter we 
split $[0,2^n]$ into sub-intervals and for each interval we
study the number of  components in $Q^n(1)$ with size in that interval. We use this information to prove Theorem \ref{second} in the next section.
\vspace{3mm}
\\We generate  $Q^n(1)$ in the following manner.
Every vertex $v\in V$ independently chooses a vertex $f(v)$ from those adjacent to it in $Q^n$ uniformly at random. Let $E_D$ be the set of arcs $\{(v,f(v)):v\in V\}$.   We set $G_D:=(V,E_D)$. Lastly we set $Q^n(1)$ be the simple graph that we get from $G_D$ when we  ignore orientation. 
\begin{rem}
$G_D$ is the union of in-arborescences and directed cycles. Moreover the in-arborescences can be chosen such that the root of every in-arborescence lies on a cycle.
\end{rem}
\subsection{The lack of a giant component}
\begin{lem}\label{small paths}
W.h.p.\@ $\nexists v,w\in V$ such that in $G_D$ there is a directed path from $u$ to $w$  of length larger than $n^2$.
\end{lem}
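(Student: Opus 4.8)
The plan is a straightforward first-moment (union-bound) argument over the possible starting vertices of a long directed path. The key structural point is that in $G_D$ every vertex has out-degree exactly one, so a directed path $v_0\to v_1\to\cdots\to v_\ell$ is completely determined by its initial vertex: one necessarily has $v_i=f^{i}(v_0)$ for all $i$, and the only randomness left is whether these vertices happen to be pairwise distinct. Hence the event in the statement is precisely the event that there exists some $u\in V$ for which $u,f(u),f^2(u),\dots,f^{L}(u)$ are all distinct, where $L:=n^2+1$.

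I would first fix $u\in V$, set $v_i:=f^i(u)$, and estimate $\mathbb{P}(v_0,\dots,v_L\text{ distinct})$ by revealing the values $f(v_0),f(v_1),\dots$ one at a time. As long as $v_0,\dots,v_{i-1}$ have turned out to be distinct, the value $f(v_{i-1})$ has not yet been exposed (since $v_{i-1}\notin\{v_0,\dots,v_{i-2}\}$), and it is a uniformly random choice among the $n$ neighbours of $v_{i-1}$ in $Q^n$; as $v_{i-2}$ is one such neighbour and already lies in $\{v_0,\dots,v_{i-1}\}$, the conditional probability that $v_i=f(v_{i-1})$ is new is at most $1-1/n$ for $i\ge 2$ (for $i=1$ it equals $1$, since $v_1$ is automatically a neighbour of $v_0$). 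Writing $\mathbb{P}(v_0,\dots,v_L\text{ distinct})$ as the telescoping product of these conditional probabilities gives $\mathbb{P}(v_0,\dots,v_L\text{ distinct})\le (1-1/n)^{L-1}=(1-1/n)^{n^2}\le e^{-n}$, using $(1-1/n)^n\le e^{-1}$.

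A union bound over the $2^n$ choices of $u$ then shows that the probability that some directed path of length larger than $n^2$ exists is at most $2^n e^{-n}=(2/e)^n\to 0$, since $2<e$. I do not anticipate a genuine obstacle; the only point that needs to be phrased with care is the sequential-revelation step, namely that the product of conditional probabilities must be taken over the decreasing sequence of events ``$v_0,\dots,v_{i-1}$ distinct'', on which $f(v_{i-1})$ is indeed a fresh, uniformly distributed value — this is what legitimises the factor $1-1/n$. Note also that the bound is not tight: the same computation would already kill all directed paths of any length exceeding $(\ln 2+\epsilon)n^2$, so the threshold $n^2$ is chosen merely for convenience in later sections.
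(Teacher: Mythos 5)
Your proposal is correct and is essentially the paper's own argument: both reveal the out-edges $f(v_{i-1})$ sequentially along the path, observe that on the event that no repeat has yet occurred the new value is uniform over the $n$ neighbours of $v_{i-1}$, one of which (the predecessor) is already visited, and so bound the survival probability of each step by $1-1/n$, yielding $2^n(1-1/n)^{n^2}\le 2^ne^{-n}=o(1)$ after a union bound over starting vertices. The only cosmetic difference is that you condition on ``all vertices so far distinct'' while the paper conditions on ``the revealed arcs do not span a cycle''; since every vertex has out-degree one these are the same event.
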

\begin{proof}
Let $v\in V$. Explore, by sequentially revealing the out-edges, the vertices that $v$ can reach (i.e\@ the vertices $v,f(v),f^2(v),...$). Suppose that  we have revealed the arcs $\big(v,f(v)\big),...,\big(f^{i-1}(v),f^i(v)\big)$ and that these arcs do not span a cycle. Then $\big(f^i(v),f^{i+1}(v)\big)$ is still distributed uniformly at random  over the $n$  arcs out of $f^{i}(v)$. Thus with probability $\frac{1}{n}$ we have $f^{i-1}(v)=f^{i+1}(v)$, in which case we say that $f^i(v)$ ``closes the path''. Let $B(v,n^2)$ be the event that
there exists a directed path out of $v$ of size larger than $n^2$ and    $A(v,i)$ be the event that 
$ \big(v,f(v)\big),...,\big(f^{i-1}(v),f^i(v)\big)$ do not span a cycle. Then,
\begin{align}
    \mathbb{P}\big (\exists v\in V: B(v,n^2)\big) &
    \leq\sum_{v\in V}\prod_{i\in[n^2]} \mathbb{P}\big( f^i(v) \text{ does not close the path}\vert A(v,i) \big)\\
    &\leq\sum_{v\in V} \prod_{i\in[n^2]} \bigg(1-\frac{1}{n}\bigg)=2^n\bigg(1-\frac{1}{n}\bigg)^{n^2}
    \leq 2^n \cdot e^{-\frac{1}{n}\cdot n^2}=o(1).
    \qedhere
\end{align}
\end{proof}
\vspace{3mm}
 \emph{\bf{Proof of Theorem \ref{first}}}. Let $Z$ be that number of unordered pairs $u,v\in V$ such that there is a path from $u$ to $v$ in $Q^n(1)$ (i.e.\@ $u,v$ belong to the same component in $Q^n(1)$).
Let $u,v$ be such a pair. Then in $G_D$ there exists a unique $w\in V$ such that there exists  di-paths (directed paths) from both $u$ and $v$ to $w$ that share no vertices other than $w$.  (here we use the convection that for every $v\in V$ there is a di-path from $v$ to $v$ of length 0). Set $Z=Z_S+Z_L$ where $Z_S$ counts the pairs of vertices where both of the corresponding paths have length at most $n^2$ and $Z_L$ counts the rest of the pairs. Lemma \ref{small paths} implies that there do not exist any di-paths of size larger than $n^2$ thus $Z_L=0$  w.h.p. 
\vspace{3mm}
\\In order to bound $Z_S$ for $v,w\in V$ let $\mathcal{P}_{u\mapsto w}$ be the set of all di-paths from $v$ to $w$ of length at most $n^2$. Furthermore for $u,v,w\in V$ let 
$\mathcal{P}_{u,v\mapsto w}\subset \mathcal{P}_{u\mapsto w}\times \mathcal{P}_{v\mapsto w}$ be the set of  all the pairs of di-paths $(P_1,P_2)\in    \mathcal{P}_{u\mapsto w}\times \mathcal{P}_{v\mapsto w}$ where $P_1$ and $P_2$ do not share any vertices other than $w$. We denote by $\mathbb{I}(\cdot)$  the indicator function. Therefore for a path $P$ we have  that $\mathbb{I}(P)=1$ in the event that $E(P)\subset E(G_D)$ and 0 otherwise. In addition for a set of paths $\mathcal{P}$ we have that 
$\mathbb{I}(\mathcal{P})=1$ in the event that there exists some $P\in \mathcal{P}$ such that  $\mathbb{I}(P)=1$ and 0 otherwise. Finally for a pair of paths $P_1,P_2$ we set $\mathbb{I}(P_1\wedge P_2)=\mathbb{I}(P_1)\mathbb{I}(P_2)$. Thus
\begin{align}\label{pathcount}
    \mathbb{E}(Z_S)&= \mathbb{E}\bigg[\sum_{u,v,w\in V}
    \sum_{(P_1,P_2)\in \mathcal{P}_{u,v\mapsto w}}
    \mathbb{I}
    \bigg( P_1 \wedge P_2\bigg) \bigg]=
  \sum_{u,v,w\in V} \sum_{(P_1,P_2)\in \mathcal{P}_{u,v\mapsto w}}   \mathbb{E}\big[\mathbb{I}\big(P_1 )\big] \mathbb{E}\big[\mathbb{I}\big( P_2)\big] \\ 
  &\leq  \sum_{u,v,w\in V} \sum_{(P_1,P_2)\in \mathcal{P}_{u\mapsto w}\times \mathcal{P}_{v\mapsto w}}   \mathbb{E}\big[\mathbb{I}\big(P_1 )\big] \mathbb{E}\big[\mathbb{I}\big( P_2)\big]
   = \sum_{u,v,w\in V}  \mathbb{E}\big[\mathbb{I}(\mathcal{P}_{u\mapsto w})
    \big]
    \mathbb{E}\big[\mathbb{I}(\mathcal{P}_{v\mapsto w}) \big] \\& 
    =\sum_{w\in V}  \mathbb{E}\bigg(\sum_{u\in V}\mathbb{I}(\mathcal{P}_{u\mapsto w})
    \bigg)
    \mathbb{E}\bigg(\sum_{v\in V}\mathbb{I}(\mathcal{P}_{v\mapsto w}) \bigg)
\end{align}
  In the second equality we used linearity of expectations and the fact that if
 $(P_1,P_2)\in \mathcal{P}_{u,v\mapsto w}$ then $P_1,P_2$ do not share 
  any vertices other than $w$. Therefore since both $P_1$,$P_2$ are both directed towards $w$ we have that they appear independently in $G_D$.
 In the inequality we used that  $\mathcal{P}_{u,v\mapsto w}\subset \mathcal{P}_{u\mapsto w}\times \mathcal{P}_{v\mapsto w}$. 
Observe that  for every $w_1,w_2\in V$ we have
\begin{align}
\mathbb{E}\bigg(\sum_{v\in V}\mathbb{I}(\mathcal{P}_{v\mapsto w_1}) \bigg)=\mathbb{E}\bigg(\sum_{v\in V}\mathbb{I}(\mathcal{P}_{v\mapsto w_2}) \bigg).
\end{align}
Therefore for $w\in V$
\begin{align}
    \mathbb{E}\bigg(\sum_{v\in V}\mathbb{I}(\mathcal{P}_{v\mapsto w}) \bigg)
    &=\frac{1}{|V|}\sum_{w'\in V}\mathbb{E}\bigg(\sum_{v\in V}\mathbb{I}(\mathcal{P}_{v\mapsto w'}) \bigg) = \frac{1}{2^{n}}\sum_{v\in V}
    \mathbb{E}\bigg(\sum_{w'\in V}\mathbb{I}(\mathcal{P}_{v\mapsto w'}) \bigg)
\\&\leq  2^{-n}\sum_{v\in V} (n^2+1) =(n^2+1).
\end{align}
 In the inequality we used that out of any vertex there are at most $n^2+1$ di-paths of length at most $n^2$ (counting the path of length 0). Substituting in \eqref{pathcount} we get
\begin{align}
    \mathbb{E}(Z_S)\leq \sum_{w\in V} (n^2+1)^2 =(n^2+1)^2 2^n.
\end{align}
Thus  the Markov inequality gives us 
\begin{align}
    \mathbb{P}\big( Z_S \geq n^6 2^n\big)=o(1).
\end{align}
Hence w.h.p.\@ $Z=Z_S+Z_L \leq n^62^n$.\vspace{3mm}\\Now let 
$C$  be the size of a largest component in $Q^n(1)$. By summing over all the unordered pairs of vertices that belong to a largest component of $Q^n(1)$, including pairs of repeated vertices,  we have  \begin{align}
    C^2\leq Z.
\end{align}
Therefore  given that $Z\leq n^62^n$ we have that $C\leq n^32^{\frac{n}{2}} =o(2^n)$ and so there is no component spanning $\Omega(2^n)$ vertices.\QEDB
\subsection{Distribution of Cycles in $Q^n(1)$}
The following lemma is the first step in proving that $Q^n(2)$ has a giant component.
\begin{lem}\label{structural}
W.h.p.\@ in $Q^n(1)$ the following hold.
\begin{enumerate}[(i)]
    \item There are $(1+o(1)){2^{n-1}}/{n}$ cycles of length two.
    \item There are  $O\big({2^{n}}/{n^{1.5}}\big)$ cycles of length  greater than two.
\end{enumerate}
\end{lem}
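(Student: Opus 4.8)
The plan is to analyze the two parts separately using the description of $G_D$ as a union of in-arborescences hanging off directed cycles. A directed cycle of length $\ell$ in $G_D$ corresponds precisely to a sequence of distinct vertices $v_0, v_1, \dots, v_{\ell-1}$ with $f(v_i) = v_{i+1}$ (indices mod $\ell$), all of them forming a closed walk in $Q^n$; when we ignore orientation this becomes a cycle of length $\ell$ in $Q^n(1)$ (for $\ell = 2$ it is a pair $u,v$ with $f(u)=v$ and $f(v)=u$, i.e.\ a ``doubled edge'' which we count as a $2$-cycle). Throughout I will use first and second moment estimates on the number $X_\ell$ of directed $\ell$-cycles.

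\emph{Part (i): expectation and concentration for $2$-cycles.} For a fixed edge $uv \in E(Q^n)$, the event that it forms a $2$-cycle is $\{f(u)=v\} \cap \{f(v)=u\}$, which by independence has probability $1/n^2$. Since $|E(Q^n)| = n 2^{n-1}$, linearity of expectation gives $\mathbb{E}(X_2) = n2^{n-1}/n^2 = 2^{n-1}/n$. For concentration I would compute the second moment: for two distinct edges $e, e'$, the corresponding indicator events are independent unless $e,e'$ share a vertex $w$, in which case they are mutually exclusive (since $f(w)$ can point only one way), so $\mathbb{E}(X_2(X_2-1)) \le \mathbb{E}(X_2)^2$. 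Hence $\mathrm{Var}(X_2) \le \mathbb{E}(X_2) = 2^{n-1}/n$, and Chebyshev's inequality gives $X_2 = (1+o(1))2^{n-1}/n$ w.h.p.

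\emph{Part (ii): first-moment bound on longer cycles.} For $\ell \ge 3$, the number of closed directed walks of length $\ell$ in $Q^n$ starting at a fixed vertex is at most the number of walks of length $\ell$ in an $n$-regular graph returning to the start, which is $n^\ell$ crudely, but a closed walk uses an even number of steps in each coordinate direction, so it returns only if $\ell$ is even and in fact the number of closed walks of length $\ell$ from a given vertex is at most $\binom{\ell}{\ell/2}(n/2)^{\ell/2} \cdot$ (something polynomial) — more simply, it is known (and easy via the spectrum $\{n-2j\}$ of $Q^n$, or directly) that the number of closed walks of length $\ell$ from a fixed vertex is at most $n^{\ell/2}\binom{\ell}{\ell/2}$ when $\ell$ is even and $0$ when $\ell$ is odd; even the trivial bound $n^\ell$ is not enough, so we do need the even-length refinement. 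Each such cycle survives in $G_D$ with probability $n^{-\ell}$, giving
\begin{align}
\mathbb{E}(X_\ell) \le 2^n \cdot n^{\ell/2}\binom{\ell}{\ell/2} \cdot n^{-\ell} \le 2^n (2/\sqrt{n})^\ell
\end{align}
for even $\ell \ge 4$, and $X_\ell = 0$ for odd $\ell$. I would also handle very long cycles, say $\ell > n^2$: Lemma \ref{small paths} already rules out directed paths of length $> n^2$, hence also cycles of length $> n^2$, w.h.p. Summing the geometric series over even $4 \le \ell \le n^2$,
\begin{align}
\sum_{\ell \ge 4 \text{ even}} \mathbb{E}(X_\ell) \le \sum_{\ell \ge 4 \text{ even}} 2^n (2/\sqrt n)^\ell = 2^n \cdot \frac{(2/\sqrt n)^4}{1 - 4/n} = O(2^n/n^2),
\end{align}
and then Markov's inequality gives that w.h.p.\ the number of cycles of length $> 2$ is $O(2^n/n^{1.5})$ (in fact $O(2^n/n^2)$, but the weaker bound suffices and is cleaner to state if one is worried about the $o(1)$ failure probabilities being absorbed).

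\emph{Main obstacle.} The delicate point is counting closed walks of length $\ell$ in $Q^n$ accurately enough: the naive bound $n^\ell$ makes $\mathbb{E}(X_\ell)$ only $2^n$, which is useless. One must exploit that a closed walk in the cube traverses each of the $n$ coordinate directions an even number of times, so if $\ell = 2m$ the walk is determined by choosing which $m$ steps are ``forward'' in their direction (a $\binom{2m}{m}$ factor) together with the unordered multiset of directions used, bounded by $n^m$; this is exactly where the saving $(2/\sqrt n)^\ell$ comes from. Getting this combinatorial estimate right — and confirming it dominates the $\ell=2$ term removed — is the only real work; everything else is routine first/second moment bookkeeping.
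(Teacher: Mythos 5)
Your part (i) is fine: the edge-by-edge first moment gives $\mathbb{E}(X_2)=2^{n-1}/n$, and since two disjoint edges give independent indicators while edges sharing a vertex give mutually exclusive ones, the Chebyshev argument is valid. (The paper instead counts vertices on $2$-cycles and applies McDiarmid; the two routes are interchangeable here.)

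Part (ii), however, has a genuine gap, and it sits exactly where the real difficulty of the lemma lies. Your key counting claim --- that the number of closed walks of length $\ell$ from a fixed vertex of $Q^n$ is at most $\binom{\ell}{\ell/2}n^{\ell/2}$ --- is false once $\ell$ is of order $n$ or larger, and you apply it for all $\ell\le n^2$. By the spectral formula the number of closed walks of length $\ell=2m$ from a fixed vertex is $2^{-n}\sum_j\binom{n}{j}(n-2j)^{2m}\ge n^{2m}/2^n$, which for $m\ge n$ is at least $(n^2/2)^m$ and dwarfs $\binom{2m}{m}n^m\le(4n)^m$. The flaw in your derivation is that fixing which $m$ steps are ``forward'' and fixing the multiset of directions does not determine the walk: you must also assign the directions to the steps, which costs roughly an extra factor $m!$ (this is the $\binom{2\ell}{\ell}\ell!$ pairing count in the paper, and the discrepancy is not an artifact of counting walks rather than cycles --- already the ``up-then-down'' cycles that use $m$ distinct coordinates twice each number about $\binom{n}{m}(m!)^2$, exceeding $(4n)^m$ for moderately large $m$). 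With the corrected count $(2m-1)!!\,n^m$ your bound becomes $\mathbb{E}(X_{2m})\le 2^n(2m/n)^m$, which is summable and $O(2^n/n^2)$ only for $m\lesssim n/4$; at $m=n$ it is $4^n$ and the pattern-counting first moment is useless throughout $m\in[n/4,n^2/2]$. So your geometric series with ratio $4/n$ does not survive the correction, and the middle range of cycle lengths is left unproved.

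This is precisely why the paper splits part (ii) into three regimes: cycle lengths up to $n/2$ by the pairing count above; lengths in $[n/2,2n^2]$ by a different idea, namely coupling the Hamming distance $|L_i(v)|$ of $f^i(v)$ from $v$ with a biased random walk on $\{0,1,\dots,n\}$ and showing (its Lemma \ref{bias}) that the probability of a return to $0$ at any even time in $[n/2,2n^2]$ is $O(n^{-4})$, so the expected number of such cycles is $O(2^n n^{-4})$; and lengths beyond $n^2$ by Lemma \ref{small paths}, as you do. To repair your proof you would need either this return-probability argument (or an equivalent mixing/large-deviation estimate for the random walk on the cube) for the middle range; no refinement of the closed-walk count alone can give it, since the number of candidate cycles there genuinely exceeds $n^{\ell}$ times any useful margin.
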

\begin{proof}
 Since $Q^n$ is bipartite it does not  contain any odd cycles. For $\ell \in [2^{n-1}]$ let $X_{2\ell}$ be the number of cycles of length $2\ell$ and set $X=\sum_{i\in \mathbb{N}}X_{2i}$.
Also let $Y_{2\ell}$ be the number of vertices that lie on cycles of length $2\ell$. Clearly $X_{2\ell}= \frac{1}{2\ell} Y_{2\ell} \leq\frac{1}{2} Y_{2\ell}$.
We start by proving (i). 
\vspace{3mm}
\\ For $v\in V$ we denote by $e_1(v)$ the edge that is chosen by $v$ in the generation of $Q^n(1)$. 
We first bound $\mathbb{E}(X_2)$. 
Then we use McDiarmid's inequality to establish concentration of $X_2$. 
\vspace{3mm}
\\A vertex $v$ lies on a 2-cycle in $Q^n(1)$ if $v$ and one of its neighbors choose the edge between them, thus with probability $n\cdot1/n^2$. 
Therefore $\mathbb{E}(Y_2)=\frac{2^n}{n}$.  
Observe that $Y_2$ is a function of $e(v_1),...,e(v_{2^n})$. Moreover if we alter one of the edges then we may destroy or/and we may create at most one cycle of size 2. Thus Lemma \ref{con2}, with $c_i=1$ for $i\in [2^n]$, implies 
$$\P\big(|Y_2-\mathbb{E}(Y_2)| \geq  2^{\frac{3n}{4}}\big) \leq 2\exp\bigg\{\frac{2^{\frac{3n}{2}}}{2^n}\bigg\}=o(1).$$
Hence w.h.p.\@ $X_2=Y_2/2 =(1+o(1))\frac{2^{n-1}}{n}$.
\vspace{3mm}
\\In order to prove (ii) (i.e.\@ to bound the number of cycles of size at least 4) for every $v\in V$ we define
the sequence $\{S_i(v)\}_{i\in \mathbb{N}}$  
with elements in  $[n]$ 
as follows. For $i \in \mathbb{N}$  we set $S_i(v)$ to be the coordinate in which $f^{i-1}(v)$ and $f^i(v)$ differ. Note we can deduce  $\{f^i(v)\}_{i\geq0}$ from $\{S_i(v)\}_{i\in \mathbb{N}}$. 
\begin{rem}
Given that $\big(v,f(v)\big),...,\big(f^{i-1}(v),f^i(v)\big)$ do not span a cycle of any length,  $f^{i+1}(v)$ is independent of  $v,f(v),...,f^{i}(v)$ and is distributed uniformly at random over all the neighbors of $f^i(v)$. Hence $S_{i+1}(v)$ is distributed uniformly over $[n]$. 
\end{rem}
We now reveal the terms of $\{S_i(v)\}_{i\in \mathbb{N}}$ one by one.
Let $\mathcal{E}_{2\ell}(v)$ be the event that $v$ 
 belongs to a cycle of length $2\ell$.
 If $\mathcal{E}_{2\ell}(v)$ occurs
 then $f^{2\ell}(v)=v$. Hence every element in $[n]$ appears an even number of times in the first $2\ell$ terms of the sequence $\{S_i(v)\}_{v\in \mathbb{N}}$.
That is because if $x\in [n]$ appears  an odd number of times among those terms
then $v$ and $f^{2\ell}(v)$ differ in their $x$-th entry.
Therefore in the event $\mathcal{E}_{2\ell}(v)$
 we can pair the first $2\ell$ terms of $\{S_i(v)\}_{v\in \mathbb{N}}$
   such that
 in every pair
 the two elements are the same. We can pair the $2\ell$ terms by first choosing $\ell$ terms out of the $2\ell$ and then pairing them with the remaining ones. That is in $\binom{2\ell}{\ell} \ell!$ ways. 
Assume that the $a^{th}$ term of the sequence, $S_{a}(v)$, is paired with the $b^{th}$ term $S_{b}(v)$, $a<b$. Then given that $\big(v,f(v)\big),...,\big(f^{b-2}(v),f^{b-1}(v)\big)$ do not span a cycle of any length,  $S_b(v)$ equals $S_a(v)$ with probability $\frac{1}{n}$.  Hence,
\begin{align}
  \mathbb{E}\bigg(\sum_{\ell=2}^{n/4}X_{2\ell}\bigg)&\leq  \mathbb{E}\bigg(\sum_{\ell=2}^{n/4}Y_{2\ell}\bigg) \leq 2^n \sum_{\ell=2}^{n/4} \binom{2\ell}{\ell} \ell! \bigg( \frac{1}{n} \bigg)^{\ell}
   \leq 2^n\sum_{\ell=2}^{n/4} \bigg(\frac{2\ell}{n}\bigg)^{\ell}=O\bigg(\frac{2^n}{n^2}\bigg).
\end{align}
The Markov inequality implies that w.h.p.\@ 
$\sum_{\ell=2}^{n/4}X_{2\ell}\leq \frac{2^n}{n^{1.5}}.$
\vspace{3mm}
\\A cycle of size $2\ell$ induces a path in $G_D$ of length $2\ell$. Therefore Lemma \ref{small paths} implies that there does not exists a cycle of size larger than $n^2$. To bound the remaining 
variables $Y_{2\ell}$, i.e.\@ when $\ell \in [n/4,n^{2}/2]$, we define  the sequence
$\{L_i(v)\}_{i\in \mathbb{N}}$.
For $i\in \mathbb{N}$ we set $L_i(v)=\{j\in[n]:(v)_j\neq \big(f^i(v)\big)_j\}$. By $\big(f^i(v)\big)_j$ we denote the $j$-th coordinate of $f^i(v)$, hence  $L_i(v)$ records the entries in which $v$ and $f^i(v)$ differ.
\vspace{3mm}
\\Recall that given that the edges $\big(v,f(v)\big),...,\big(f^{i-1}(v),f^{i}(v)\big)$ do not span a cycle of any length  $f_{i+1}(v)$ is chosen uniformly at random from the $n$ neighbors of $f^i(v)$. Let $j=S^{i+1}(v)$. If  $j\in L_i(v)$,
then $L_{i+1}(v)=L_i(v){ \setminus}\{j\}$.
On the other hand if $j\in[n]{ \setminus} L_i(v)$, then $L_{i+1}(v)=L_i(v)\cup \{j\}$. Thus, as $j$ is chosen uniformly at random from $[n]$ we have that 
$|L_{i+1}(v)|=|L_{i}(v)-1|$ with probability $\frac{|L_{i}(v)|}{n}$ and
$|L_{i+1}(v)|=|L_{i}(v)|$ with probability $\frac{n-|L_{i}(v)|}{n}$. 
\vspace{3mm}
\\Now let $\{L_i\}_{i\geq0}$ be  the biased random walk  {defined by},
$$L_{i} = \begin{cases} 0 &\mbox{if $i=0$},\\
L_{i-1}-1&\mbox{ with probability } \frac{L_{i-1}}{n} \hspace{10mm} \text{if } i\geq 1, \\ L_{i-1}+1 &\mbox{ with probability }\frac{n-L_{i-1}}{n} \hspace{6mm} \text{if } i\geq 1.
\end{cases}
$$
If $v=f^0(v)$ lies on a cycle of length $2\ell$ then $f^{2\ell}(v)=v$ and $L_{2\ell}(v)=\emptyset$. 
Therefore we can couple the sequence $\{|L_i(v)|\}_{i\geq0}$ with the bias random walk $\{L_i\}_{i\geq0}$ such that if $v$ lies on a cycle of length $2\ell$ then $L_{2\ell}=0.$ In order to finish the proof we use the following fact whose proof is given in the appendix.
\begin{lem}\label{bias}
Let $\{L_i\}_{i\geq0}$ be a biased random walk given by, 
$$L_{i} = \begin{cases} 0 &\mbox{if $i=0$},\\
L_{i-1}-1&\mbox{ with probability } \frac{L_{i-1}}{n} \hspace{10mm} \text{if } i\geq 1, \\ L_{i-1}+1 &\mbox{ with probability }\frac{n-L_{i-1}}{n} \hspace{6mm} \text{if } i\geq 1.
\end{cases}
$$
Then we have that
$\mathbb{P}\big( 
\exists \ell \in [n/4,n^2]: L_{2\ell}=0  \big)\leq n^{-4}.$
\end{lem}
Given Lemma \ref{bias} we have that 
\begin{align}
    \sum_{\ell =\frac{n}{4}}^{\frac{n^2}{2}}\mathbb{E}(X_{2\ell})&\leq 
    \sum_{\ell =\frac{n}{4}}^{\frac{n^2}{2}}\mathbb{E}(Y_{2\ell})=
    2^n\mathbb{P}\big(v \text{ belongs to a cycle of size }2\ell, \ell \in [n/4,n^2/3]\big)\\
    &\leq 2^n \mathbb{P}\big(\exists \ell \in [n/4,n^2]: L_{2\ell}=0\big)\leq n^{-4}2^n.
\end{align}
Finally the Markov inequality implies, 
$\mathbb{P}\bigg( \underset{\ell ={n}/{4}}{\overset{{n^2}/{2}}{\sum}}X_{2\ell}\geq \frac{2^n}{n^2}\bigg)=o(1). $
\end{proof}

\begin{cor}\label{number}
W.h.p.\@  $Q^n(1)$ consists of $(1+o(1)){2^{n-1}}/{n}$ connected components.
\end{cor}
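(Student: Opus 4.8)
The plan is to observe that the connected components of $Q^n(1)$ are in bijection with the directed cycles of $G_D$, and then to read off the count from Lemma \ref{structural}.

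First I would note that forgetting orientations and collapsing multi-arcs does not change which vertices lie in the same component, so the connected components of $Q^n(1)$ coincide with the weakly connected components of $G_D$. Now $G_D$ is a functional digraph: every vertex has out-degree exactly one, so every weakly connected component on $m$ vertices carries exactly $m$ arcs and, being connected, contains exactly one cycle. By the Remark following the construction of $G_D$ this cycle is a directed cycle and the remainder of the component is a union of in-arborescences feeding into it. Consequently the number of connected components of $Q^n(1)$ equals the total number of directed cycles of $G_D$.

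It then remains only to combine the two parts of Lemma \ref{structural}. W.h.p.\@ there are $(1+o(1))2^{n-1}/n$ directed cycles of length two and $O(2^n/n^{1.5})$ directed cycles of length greater than two (there are no odd cycles, since $Q^n$ is bipartite). Adding these and using $2^n/n^{1.5} = o(2^n/n)$ gives that w.h.p.\@ $Q^n(1)$ has $(1+o(1))2^{n-1}/n$ connected components, as claimed.

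There is no real obstacle here; the only step requiring a moment's care is the identification of components with directed cycles, which is immediate from the functional-graph structure already recorded in the Remark, and everything else is a direct substitution into Lemma \ref{structural}.
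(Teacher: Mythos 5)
Your proof is correct and follows exactly the paper's route: the paper also deduces the corollary from Lemma \ref{structural} together with the fact that each component of $Q^n(1)$ contains a unique cycle. Your write-up merely spells out the functional-digraph justification that the paper leaves implicit.
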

\begin{proof}
The corollary follows directly from Lemma \ref{structural} and the fact that every component in $Q^n(1)$ contains a  unique cycle.
\end{proof}
\section{The emergence  of the Giant Component}
\subsection{The construction of $Q^n(1.5)$}
Let $V=V_0\cup V_1$ where $V_0=\{v\in V: 
{\sum}_{i\in[n]}(v)_i=0\mod 2\}$ and $V_1=V{ \setminus} V_0$. For $v\in V$ denote the edge found in $Q^n(1)$ $\big(v,f(v)\big)$ by $e_1(v)$.
Given $Q^n(1)$ we construct $Q^n(1.5)$ as follows. Every $v\in V_0$ independently chooses, uniformly at random, an edge $e_{1.5}(v)$ from those adjacent to it in $Q^n$, excluding $e_1(v)$. Let $E_{1.5}':=\{e_{1.5}(v):v\in V_0\}$ and define $Q^n(1.5)$ by 
$E_{1.5}:=E\big(Q^n(1.5)\big):=E_{1.5}'\cup  E\big(Q^n(1)\big)$ and $V\big(Q^n(1.5)\big):=V$.  
\begin{lem}\label{1.5}
W.h.p.\@ $Q^n(1.5)$ consist of at most $\frac{5\cdot2^n}{n\log_2\log_2n}$ components.
\end{lem}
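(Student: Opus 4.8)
The plan is to think of the construction of $Q^n(1.5)$ as adding, for each vertex $v \in V_0$, one extra random edge $e_{1.5}(v)$ to the graph $Q^n(1)$, and to track how the component count drops as these $|V_0| = 2^{n-1}$ new edges are revealed one at a time. Each new edge either merges two distinct components (decreasing the count by one) or lies inside an existing component (no change). So the number of components of $Q^n(1.5)$ equals the number of components of $Q^n(1)$ minus the number of ``useful'' edges $e_{1.5}(v)$ that actually merge two components. By Corollary \ref{number}, w.h.p.\@ $Q^n(1)$ has $(1+o(1))2^{n-1}/n$ components, so it suffices to show that w.h.p.\@ at least $(1-o(1))2^{n-1}/n$ of the new edges are useful — equivalently, the number of \emph{wasted} edges (those falling within a component of the current graph) is $o(2^{n-1}/n)$; then the surviving component count is $o(2^{n}/n) = o(2^n/(n\log_2\log_2 n))$ after one is slightly more careful about constants. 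Actually, to get the stated bound $\tfrac{5\cdot 2^n}{n\log_2\log_2 n}$ one does not even need all edges to be useful: it is enough that all but $O(2^n/(n\log_2\log_2 n))$ of them are useful, which is a weaker and hence easier statement.

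The key structural input is Lemma \ref{structural}: w.h.p.\@ every component of $Q^n(1)$ is a ``unicyclic'' graph whose unique cycle has length $2$, except for $O(2^n/n^{1.5})$ components whose cycle is longer; moreover by Lemma \ref{small paths} all directed paths (hence all components' depths) are at most $n^2$, so every component has size at most, say, $n^3$ w.h.p. The point of this is that when we reveal $e_{1.5}(v)$ for $v \in V_0$, the probability it is wasted is at most (size of $v$'s current component)$/n \le n^3/n = n^2$ ... which is useless as a naive bound. So the real work is to argue that \emph{typical} components are small. First I would show that w.h.p.\@ all but $o(2^n/(n\log_2\log_2 n))$ vertices of $Q^n(1)$ lie in components of size at most $s_0 := \tfrac{1}{10}\log_2\log_2 n$ (or some slowly growing function); this should follow from a first-moment / subcritical-branching-process estimate on $G_D$: the expected number of vertices in components of size $\ge s$ decays (roughly) geometrically in $s$ because each step of the exploration branches with in-degree whose distribution is dominated by a $\mathrm{Bin}(n-1,1/n) \approx \mathrm{Poisson}(1)$ variable, and a Poisson$(1)$ branching process survives to generation $s$ with probability exponentially small in $s$. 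With $s_0$ as above, $2^{s_0}$ is a small power of $n$, which will be absorbed comfortably.

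The main computation is then a first/second-moment bound on the number $W$ of wasted edges. I would write $W = \sum_{v\in V_0} \mathbb{I}[e_{1.5}(v) \text{ has both endpoints in } v\text{'s component in } Q^n(1)]$. Conditioning on $Q^n(1)$, we have $\mathbb{P}(e_{1.5}(v) \text{ wasted} \mid Q^n(1)) \le (c_v - 1)/(n-1)$ where $c_v$ is the size of $v$'s component, since at most $c_v - 1$ of the $n-1$ available edges out of $v$ (other than $e_1(v)$) can land inside that component. Splitting $V_0$ according to whether $c_v \le s_0$ or $c_v > s_0$: the first group contributes expected $W \le \sum_v s_0/(n-1) \le 2^{n-1} s_0 / (n-1)$, which is $O(2^n \log_2\log_2 n / n)$ — too big by a factor $(\log_2\log_2 n)^2$! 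So I must be sharper: instead of bounding every small component's contribution by $s_0/n$, sum $c_v$ over $v$ in small components, i.e.\@ bound $\sum_{v: c_v \le s_0} (c_v-1) = \sum_{\text{small comps } C} |C|(|C|-1)$. The expected value of this is $2^n \cdot \mathbb{E}[(|C(v)|-1)\mathbb{I}(|C(v)|\le s_0)]$ where $C(v)$ is $v$'s component, and $\mathbb{E}[|C(v)| - 1]$ is bounded (since the component-size distribution has a bounded mean in the subcritical/critical regime — here each vertex's out-tree contributes a geometric-ish amount), so this expectation is $O(2^n)$, giving $\mathbb{E}[W \mid \text{small part}] = O(2^n/n) = o(2^n/(n\log_2\log_2 n))$ — good, provided one can show $\mathbb{E}[\sum_C |C|^2]$ is only $O(2^n \cdot (\text{slowly growing}))$, which the geometric tail gives. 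For the second group (vertices in large components), I use that w.h.p.\@ the total number of such vertices is $o(2^n/(n\log_2\log_2 n))$, and each such vertex wastes at most one edge, so that group contributes $o(2^n/(n\log_2\log_2 n))$ deterministically once we condition on the high-probability event. Combining, $\mathbb{E}[W] = o(2^n/(n\log_2\log_2 n))$ after conditioning on a high-probability event, and Markov finishes it; then the component count of $Q^n(1.5)$ is at most $(1+o(1))2^{n-1}/n - (|V_0| - |\{v: e_{1.5}(v)\text{ not a merge}\}|)$... more directly, number of components $= $ (components of $Q^n(1)$) $- \#\{\text{useful edges}\} \le (1+o(1))2^{n-1}/n$, which is $\le \tfrac{5\cdot 2^n}{n\log_2\log_2 n}$ for large $n$.

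The hard part, and the place where I expect to spend the most care, is the quantitative control of the component-size distribution of $Q^n(1)$: establishing that $\mathbb{E}[\sum_{\text{components } C} |C|^2]$ (or at least the truncated version restricted to $|C| \le s_0$) is $O(2^n)$ up to slowly-growing factors, and simultaneously that only $o(2^n/(n\log_2\log_2 n))$ vertices lie in components of size exceeding $s_0$. Both of these require a clean domination of the component exploration in the functional-digraph $G_D$ by a subcritical-or-critical branching process and a tail estimate for that process. The bipartite-shift trick (each cycle has even length, each component has exactly one cycle, most cycles have length $2$ by Lemma \ref{structural}) should let me reduce a generic component to: a short cycle, plus in-trees hanging off it, and then the in-tree sizes are governed by i.i.d.\@-ish $\mathrm{Bin}(n-1,1/n)$ offspring, so the total-progeny tail is the standard $\mathbb{P}(\text{progeny} = s) = \Theta(s^{-3/2})$ (critical) or geometric (subcritical) bound; integrating against $s^2$ over $s \le s_0$ is then a routine but delicate sum that I would need to carry out honestly.
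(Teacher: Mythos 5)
Your accounting does not close, and the gap is structural rather than technical. First, note that the target bound $\frac{5\cdot 2^n}{n\log_2\log_2 n}$ is \emph{smaller} than the component count of $Q^n(1)$ by a factor of roughly $\log_2\log_2 n/10$, so your final step ``number of components $\le (1+o(1))2^{n-1}/n$, which is $\le \frac{5\cdot 2^n}{n\log_2\log_2 n}$'' is backwards: for large $n$ one has $\frac{2^{n-1}}{n}\gg \frac{5\cdot 2^n}{n\log_2\log_2 n}$. The whole content of the lemma is that the half-round of sprinkling merges all but a $O(1/\log_2\log_2 n)$ fraction of the $\sim 2^{n-1}/n$ components of $Q^n(1)$; bounding by the $Q^n(1)$ count proves nothing. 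Second, your ``useful versus wasted'' dichotomy is miscounted: there are $|V_0|=2^{n-1}$ new edges but only $\sim 2^{n-1}/n$ components, so the number of useful (merging) edges is at most $2^{n-1}/n$ and hence at least $(1-\frac1n)2^{n-1}$ new edges are necessarily wasted. Your claimed equivalence ``at least $(1-o(1))2^{n-1}/n$ edges useful $\iff$ at most $o(2^{n-1}/n)$ edges wasted'' is therefore false, and the quantity you actually estimate, $\sum_v (c_v-1)/(n-1)$, only counts edges landing in $v$'s \emph{original} $Q^n(1)$-component. The dominant source of waste — an edge joining two original components that earlier sprinkled edges have already merged — is exactly what you must rule out for most components, and your computation never touches it. (As a side remark, ``every component of $Q^n(1)$ has size at most $n^3$'' also does not follow from Lemma \ref{small paths}: that lemma bounds the depth of the in-arborescences, not their size, and the paper's own proof of Theorem \ref{first} only gets $O(n^3 2^{n/2})$ for the largest component.)

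What is actually needed, and what the paper does, is an expansion argument: if $Q^n(1.5)$ still had many vertices in components of size at most $n^2$, then by the isoperimetric inequality (Lemma \ref{iso}) there would be $\Omega(2^n)$ hypercube edges between distinct components, hence a partition of the components of $Q^n(1)$ with at least $2^{n-1}$ crossing edges all of which were avoided by every $e_{1.5}(v)$; a union bound over the at most $2^{(1+o(1))2^{n-1}/n}$ partitions (using Corollary \ref{number}) kills this. A separate first-moment count (the paper's Claim 2, which is the only place your branching-process-flavoured estimates have an analogue) handles components of size below $\log_2\log_2 n$. If you want to keep your sequential-sprinkling picture, the question becomes whether the contracted multigraph on the $\sim 2^{n-1}/n$ super-vertices with $2^{n-1}$ super-edges is nearly connected, and since its average degree $\sim n$ sits essentially at the connectivity threshold for $\sim 2^{n-1}/n$ vertices, you cannot avoid an expansion/partition argument of this kind.
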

\begin{rem}
The extra $1/\log_2\log_2n$ factor in the number of the components  will be the catalyst in our proof  for the size of the giant component in $Q^n(2)$.
\end{rem}
\begin{proof}
We split our proof into two parts. In the first one we show that an at most  $1/n$ fraction  of the vertices lie in a component of size at most $n^2$.  After this we show that the number of components of size less than $\log_2\log_2n$ in $Q^n(1.5)$ is small.   
\vspace{3mm}
\\{\bf{Claim 1.}} W.h.p.\@ in $Q^n(1.5)$ at most $\frac{2^{n+2}}{n}$ vertices lie on a component of size at most $n^2$.
\vspace{3mm}
\\ \emph{Proof of Claim 1}. Let $C_1,...,C_z$ be the components of $Q^n(1)$. For a given partition of the components of $Q^n(1)$ into two sets $P_1,P_2$ define $E_{P_1,P_2}(1):=\{uv\in E : u\in C_i, v\in C_j, C_i\in P_1 \text{ and } C_j\in P_2 \}$ and for $i=1,2$ $V(P_i)=\{v\in V: \exists C_j\in P_i \text{ with } v\in C_j \}$. Furthermore define the set of partitions $\mathcal{P}_1:=\big\{(P_1,P_2): |E_{P_1,P_2}(1)| \geq {2^{n-1}}  \big\}$. For $(P_1,P_2)\in \mathcal{P}_1$ let $\mathcal{B}_{1.5}(P_1,P_2)$ be the event that $E_{P_1,P_2}(1)\cap E_{1.5}'=\emptyset  $. Then 
\begin{align}\label{uselater}
    \mathbb{P}\big(\mathcal{B}_{1.5}(P_1,P_2)\big)
    &= \prod_{v\in V(P_1)\cap V_0}
    \Bigg(1-\frac{d\big(v,V(P_2)\big)}{n-1} \Bigg)
    \prod_{v\in V(P_2)\cap V_0}
    \Bigg(1-\frac{d\big(v,V(P_1)\big)}{n-1} \Bigg)\\
    &\leq \exp\Bigg\{-\sum_{v\in V(P_1)\cap V_0}
    \frac{d\big(v,V(P_2)\big)}{n}
    -\sum_{v\in V(P_2)\cap V_0}
    \frac{d\big(v,V(P_1)\big)}{n}
    \Bigg\}\\
    &= \exp\bigg\{- \frac{ | E_{P_1,P_2}{(1)}| }{n}\bigg\} \leq 
    \exp\bigg\{- \frac{ 2^{n-1}}{n}\bigg\}.
\end{align}
To go from the second to the third line we used that every edge in $E_{P_1,P_2}(1)$ has one endpoint in each of $V(P_1),V(P_2)$ and that exactly one of those belongs to $V_0$.
Corollary \ref{number} implies that  $|\mathcal{P}_1|\leq 2^{\frac{(1+o(1))2^{n-1}}{n}}$.
Therefore 
\begin{align}\label{partition1.5}
    \mathbb{P}\big ( \exists (P_1,P_2)\in \mathcal{P}_1 : \mathcal{B}_{1.5}(P_1,P_2) \text{ occurs }\big)
    \leq 2^{\frac{(1+o(1))2^{n-1}}{n}} \cdot \exp\bigg\{- \frac{ 2^{n-1}}{n}\bigg\}=o(1).
\end{align}
Now let $C_1',..,C_w'$ be the components of $Q^n(1.5)$ where $C_1',...,C_s'$, $s\leq z$, are all the components of size at most $n^2$. Furthermore set $E_B(1.5)=\{uv\in E: u\in C_i', v\in C_j' \text{ and } i\neq j\}$.
Let $\mathcal{B}_{1.5}$ be the event that 
 more than $\frac{2^{n+2}}{n}$ vertices lie on a component of size at most $n^2$. If $\mathcal{B}_{1.5}$ occurs then
 \begin{align}
     \big| E_B(1.5)\big|&= \frac{1}{2}\sum_{i\in [z]}E(C_i,V{ \setminus} C_i) 
     \geq  \frac{1}{2}\sum_{i\in [s]}E(C_i,V{ \setminus} C_i)
    \geq \frac{1}{2}\sum_{i\in [s]}(n|C_i|-|C_i|\log_2|C_i|)\\
    &\geq \frac{1}{2}\sum_{i\in [s]}
    (n-\log_2n^2)|C_i|
    \geq  \frac{[1-o(1)]n}{2}\cdot \frac{2^{n+2}}{n}=[1-o(1)]2^{n+1}.
 \end{align}
We now partition $C_1',...,C_z'$ into two sets $P_1'',P_2''$ by independently including each $C_i'$ in $P_1''$ with probability 0.5 and into $P_2''$ otherwise. 
Let $E_{P_1'',P_2''}(1.5)=\{uv\in Q^n: u\in C_i', v\in C_j', C_i'\in P_1'' \text{ and } C_j'\in P_2''  \}$. Then
\begin{align}
\mathbb{E}\big(\big| E_{P_1'',P_2''}(1.5)\big| \big)=\sum_{e\in E_B}\mathbb{P}\big(e\in E_{P_1'',P_2''}(1.5)\big)=\sum_{e\in E_B}0.5\geq [1-o(1)]2^{n}.
\end{align}  
Therefore if the event $\mathcal{B}_{1.5}$ occurs then there exists a partition $P_1',P_2'$ of $C_1',...,C_w'$  such that $E_{P_1',P_2'}(1.5)\geq [1-o(1)]2^{n}$. Each of $C_i'$ is a union sets in  $\{C_1,C_2,...,C_w\}$. In addition $C_i'$ are disjoint. Thus $P_1',P_2'$ induces a partition of $\{C_1,C_2,...,C_w\}$ into two sets $P_1,P_2$ such that $|E_{P_1,P_2}(1)|=| E_{P_1',P_2'}(1.5)|\geq [1-o(1)]2^{n}\geq 2^{n-1}$ and $E_{P_1,P_2}(1)\cap E_{1.5}'=\emptyset$. Hence, since  $\mathcal{B}_{1.5}$ implies that there exists  $(P_1,P_2)\in \mathcal{P}_{1}$ such that $\mathcal{B}_{1.5}(P_1,P_2)$  occurs, \eqref{partition1.5} implies  that $\mathbb{P}\big(\mathcal{B}_{1.5}\big)=o(1).$ \QEDB
\vspace{3mm}
\\ {\bf{Claim 2.}} W.h.p.\@ in $Q^n(1.5)$ there are at most $\frac{2^{n}}{n^{1.5}}$ vertices that lie on a connected component $C_T$  which satisfies the following: i) $|C_T|\leq s=\log_2\log_2n$, ii) any cycle spanned by $C_T$ of size larger than 2 has a nonempty intersection with $E_{1.5}'$.
\vspace{3mm}
\\ \emph{Proof of Claim 2}. Let $M$ be the number of  connected components satisfying the above  conditions and let $H$ be one of them. Then the following are true. $H$  spans  a component  $H_1$ in $G_D$ of size at most $s$. Moreover $H_1$ spans a cycle $C_{H_1}$ of size 2 (due condition ii). In addition if we let  
$v\in C_{H_1} \cap V_0$ then $\exists w\in V$ and a component  $H_2$ in $G_D$ such that $e_{1.5}(v)=(v,w)$, 
 $w\in H_2$, $|H_2|\leq s$  and $H_2$ contains a 2-cycle.
\vspace{3mm}
\\We can specify an instance of the above configuration as follows. First we specify $H_1$  
by choosing two vertices $v_1,v_2$ and two in-arborescences $T_1,T_2$ rooted at $v_1$ and $v_2$ respectively. Furthermore we request that $v_2$ is a neighbor of $v_1$ (w.l.o.g\@ $v_1\in V_0$) so that is feasible for 
$\{v_1,v_2\}$ to span a cycle in $G_D$. In addition  $T_1,T_2$ must satisfy
  $|T_1|=s_1, |T_2|=s_2$ for some  $s_1,s_2\leq s$. Thus in $G_D$, $H_1$ consists of the 2 in-arborescences  $T_1,T_2$ and the directed cycle $v_1,v_2,v_1$.
Thereafter we choose $(v_1,w)=e_{1.5}(v)$. In the case that $H_1=H_2$ we choose $w\in V(H_1)$ and we set $s_3=0$.
 Otherwise we choose $w\notin V(H_1)$ 
and we also choose a tree $T_3$ that contains $w$ such that it is vertex disjoint from $H_1$ and it has size $s_3\leq s$. 
We then choose a vertex $v_3$ in $V(T_3)$ and a neighbor of it $v_4$. Then we direct every arc on $T_3$ either towards $v_3$ or $v_4$ to create two in-arborescences $T_4$, $T_5$ rooted at $v_3$ and $v_4$ respectively. Thus in $G_D$, $H_2$ consists of the 2 in-arborescences  $T_3,T_4$ and the directed cycle $v_3,v_4,v_3$. Furthermore $w$ belongs to one of $T_3,T_4$.
 Observe that the probability of all the arcs in $E(T_1)\cup E(T_2)\cup\{(v_1,v_2),(v_2,v_1)\}$
 occurring in $G_D$ is $\frac{1}{n^{s_1+s_2}}.$ In addition $(v_1,w)$ occurs with probability $\frac{1}{n-1}$ and in the case that $H_1\neq H_2$ the arcs in $E(T_4)\cup E(T_5)\cup\{(v_3,v_4),(v_4,v_3)\}$
occur in $G_D$ with probability $\frac{1}{n^{s_3}}.$
\vspace{3mm}
\\There are $2^n$ choices for $v_1$ thereafter $n$ choices for $v_2$. Furthermore from Lemma \ref{trees1}
we have that there are at most $\binom{n}{s_i} / [(n-1)s_i+1]\leq n^{s_i}/ [(n-1)s_i+1]$ choices for $T_i$, $i\in \{1,2,3\}$. In the case that $w\in V(H_1)$ or equivalently in the case that $s_3=0$
there are at most $s_1+s_2\leq 2s$ choices for $w$. Otherwise there are at most $n$ of them. 
Therefore,
\begin{align}
    \mathbb{E}(M)&\leq \sum_{s_1,s_2\leq s}n2^n\prod_{i\in[2]} \frac{n^{s_i}}{(n-1)s_i+1}\cdot \frac{1}{n^{s_1+s_2}}  \cdot\frac{1}{n-1}
    \bigg({2s}+\sum_{s_3=1}^{s}   \frac{n^{s_3}}{(n-1)s_3+1} \cdot n\cdot   \frac{1}{n^{s_3}}\bigg) \\
    & \leq \sum_{s_1,s_2\leq s}n2^n\cdot \frac{1}{n^2(n-1)} 
    ({2s}+s ) \leq \frac{s^2\cdot 3s \cdot 2^n }{n(n-1)}=o\bigg(\frac{2^n\cdot \log_2 n}{n^2}\bigg).
\end{align}
The Markov inequality implies that $\mathbb{P}\big(M\geq \frac{2^n}{n^{1.5}}\big)=o(1)$. \QEDB
\vspace{3mm}
\\In $Q^n(1.5)$ there are  w.h.p.\@ at most $\frac{2^n}{n^2}$ components  of size at least $n^2$. 
Furthermore  Claim 1 implies that  w.h.p.\@ there are at most $\frac{2^{n+2}}{n\log_2\log_2n}$ components  of size in $[\log_2\log_2n,n^2]$. 
Thereafter Claim 2 implies that  w.h.p.\@  there are at most $\frac{2^{n}}{n^{1.5}}$ components of 
size at most $s=\log_2\log_2n$ such  that any cycle of length larger than 2 spanned by such a component has a nonempty intersection with $E_{1.5}'$.
Any component that we have not accounted for must span a cycle of size at least 4 whose edges lie in $E\big(Q^n(1)\big)$. The number of such components is bounded by the number of cycles of size larger than 4 in $Q^n(1)$ which is $O\big(\frac{2^n}{n^{1.5}}\big)$ (by Lemma \ref{structural}). 
Summing up we have that $Q^n(1.5)$  consists of at most  $\frac{5\cdot 2^{n}}{n\log_2\log_2n}$ connected components.
\end{proof}
\begin{lem}
W.h.p\@ $Q^n(2)$ has a connected component of size $[1-o(1)]2^n$.
\end{lem}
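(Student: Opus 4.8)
\emph{Proof strategy.} The plan is to realise $Q^n(2)$ as $Q^n(1.5)$ together with one extra sprinkled edge per vertex of $V_1$ and to run a union bound over partitions of the components of $Q^n(1.5)$, in the same spirit as Claim 1 in the proof of Lemma \ref{1.5}. Recall that $Q^n(2)$ is obtained from $Q^n(1.5)$ by letting each $v\in V_1$ pick, independently and uniformly, an edge $e_2(v)$ from the $n-1$ edges of $Q^n$ at $v$ other than $e_1(v)$, and setting $E\big(Q^n(2)\big)=E\big(Q^n(1.5)\big)\cup\{e_2(v):v\in V_1\}$; crucially $\{e_2(v):v\in V_1\}$ is independent of $Q^n(1.5)$. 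First I would condition on the w.h.p.\@ event (Lemma \ref{1.5}) that $Q^n(1.5)$ has at most $m:=\frac{5\cdot2^n}{n\log_2\log_2 n}$ components $D_1,\dots,D_m$. Since $Q^n(1.5)\subseteq Q^n(2)$, every component of $Q^n(2)$ is a union of some $D_i$'s, so it suffices to show that w.h.p.\@ there is no partition of $\{D_1,\dots,D_m\}$ into two classes, each spanning at least $\epsilon_n2^n$ vertices and with no $e_2$-edge between them, where $\epsilon_n:=1/\log_2\log_2 n$.

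Call a partition $(P_1,P_2)$ of $\{D_1,\dots,D_m\}$ \emph{bad} if no edge $e_2(v)$, $v\in V_1$, joins $V(P_1)$ to $V(P_2)$, and \emph{balanced} if $\min\{|V(P_1)|,|V(P_2)|\}\geq\epsilon_n2^n$. For a fixed $(P_1,P_2)$ the estimate mirrors \eqref{uselater}, with the roles of $V_0$ and $V_1$ exchanged: since $Q^n$ is bipartite with parts $V_0,V_1$, every edge of $E\big(V(P_1),V(P_2)\big)$ has exactly one endpoint in $V_1$, and since $e_1(v)$ already lies inside the $D_i$ containing $v$ it never crosses; hence $\mathbb{P}\big((P_1,P_2)\ \text{bad}\big)\leq\exp\{-d(V(P_1),V(P_2))/n\}$. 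If $(P_1,P_2)$ is balanced, its smaller class has size in $[\epsilon_n2^n,2^{n-1}]$, so the isoperimetric inequality (Lemma \ref{iso} and Remark \ref{iso2}) gives $d(V(P_1),V(P_2))\geq\min\{f(\epsilon_n2^n),f(2^{n-1})\}=\epsilon_n2^n\log_2(1/\epsilon_n)$ for $\epsilon_n$ small, where $f(x)=nx-x\log_2 x$. As there are at most $2^m$ partitions, a union bound yields
\[
\mathbb{P}\big(\exists\text{ a balanced bad partition}\big)\ \leq\ 2^m\exp\Big\{-\tfrac{\epsilon_n2^n\log_2(1/\epsilon_n)}{n}\Big\}\ \leq\ \exp\Big\{\tfrac{2^n}{n\log_2\log_2 n}\big(5\ln 2-\log_2\log_2\log_2 n\big)\Big\}\ =\ o(1),
\]
the last step because $\log_2\log_2\log_2 n\to\infty$. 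I expect this balance to be the main obstacle, and it is exactly where the extra $1/\log_2\log_2 n$ factor of Lemma \ref{1.5} is essential: with only $\Theta(2^n/n)$ components one could not push $\epsilon_n$ below a constant, and the argument would yield only a linearly large component rather than an almost-spanning one.

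Finally I would deduce the giant from the absence of a balanced bad partition. Suppose $Q^n(2)$ has no component on at least $(1-\epsilon_n)2^n$ vertices. If its largest component $C^{*}$ has at least $\epsilon_n2^n$ vertices, take $P_1$ to be the $D_i$'s contained in $C^{*}$ and $P_2$ the rest: both classes span at least $\epsilon_n2^n$ vertices and no $e_2$-edge crosses (every edge of $Q^n(2)$ lies inside a component), so $(P_1,P_2)$ is balanced and bad. Otherwise every component of $Q^n(2)$ has fewer than $\epsilon_n2^n<2^{n-1}$ vertices, and adding components greedily to $P_1$ until $|V(P_1)|\geq2^{n-1}$ gives $|V(P_1)|,|V(P_2)|\geq(\tfrac12-\epsilon_n)2^n\geq\epsilon_n2^n$, again balanced and bad. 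Either way we contradict the previous paragraph, so w.h.p.\@ $Q^n(2)$ has a component of size at least $(1-\epsilon_n)2^n=[1-o(1)]2^n$. Apart from the balance in the displayed union bound, every step is a routine adaptation of the computation in the proof of Lemma \ref{1.5}.
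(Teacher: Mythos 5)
Your proof is correct and follows essentially the same route as the paper: sprinkle the $V_1$-edges onto $Q^n(1.5)$, union-bound over the at most $2^m$ two-class partitions of its components using Lemma \ref{1.5} for $m$, and use the isoperimetric inequality to lower-bound the number of crossing cube-edges, with the $1/\log_2\log_2 n$ gain in the component count beating the $\log_2\log_2\log_2 n$ gain from isoperimetry. The only (harmless) difference is that you apply isoperimetry directly to a balanced partition and construct such a partition deterministically from the components of $Q^n(2)$, whereas the paper applies it component-by-component of $Q^n(2)$ and then extracts a partition with many crossing edges via a random-partition averaging argument.
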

\begin{proof}
We generate $E_2'$ by  independently including  for every $v\in V_1$ an edge that is adjacent to $v$ chosen uniformly at random from the $n-1$ edges adjacent to it in $Q^n$ excluding $e_1(v)$. 
We then extend $Q^n(1.5)$ to $Q^n(2)$ by adding to its edge set  all the edges in $E_2'$.
Henceforth we follow the same argument given in the prove of Lemma \ref{1.5}${\setminus}$Claim 1.
\vspace{3mm}
\\Let $W_1,...,W_z$ be the components of $Q^n(1.5)$. For a given partition of the components of $Q^n(1.5)$ into two sets $P_1,P_2$ define $E_{P_1,P_2}(1.5)=\{uv\in E : u\in W_i, v\in W_j, W_i\in P_1 \text{ and } W_j\in P_2 \}$. Furthermore define the set of partitions $\mathcal{P}_{1.5}:=\big\{(P_1,P_2): |E_{P_1,P_2}(1.5)| \geq \frac{25\cdot 2^{n}}{\log_2\log_2 n}  \big\}$. For $(P_1,P_2)\in \mathcal{P}_{1.5}$ let $\mathcal{B}_2(P_1,P_2)$ be the event that $E_{P_1,P_2}(1.5)\cap E_{2}'=\emptyset  $. Then  similar calculations to those done for \eqref{uselater} imply 
\begin{align}
    \mathbb{P}\big(\mathcal{B}_2(P_1,P_2)\big)
    \leq  \exp\bigg(- \frac{\big| E_{P_1,P_2}(1.5)\big| }{n}\bigg) =
    \exp\bigg(- \frac{25\cdot 2^{n}}{n\log_2\log_2 n} \bigg).
\end{align}
As argued earlier $Q^n(1.5)$ w.h.p.\@ consists of at most $\frac{5\cdot 2^{n}}{n\log_2\log_2 n}$ connected components  and hence  $|\mathcal{P}_{1.5}|\leq 2^{\frac{5\cdot 2^{n}}{n\log_2\log_2 n}}$.
Therefore 
\begin{align}\label{partition2}
    \mathbb{P}\big ( \exists (P_1,P_2)\in \mathcal{P}_{1.5} : \mathcal{B}_2(P_1,P_2) \text{ occurs }\big)
    \leq 2^{\frac{5\cdot 2^{n}}{n\log_2\log_2 n}} \cdot \exp\bigg(- \frac{25\cdot 2^{n}}{n\log_2\log_2 n}\bigg)=o(1).
\end{align}
Now let $W_1',..,W_q'$ be the components of $Q^n(2)$ and let $\mathcal{B}_2$ be the event that $Q^n(2)$ has no connected component of size larger than $h=\bigg(1-\frac{100\log 2}{\log_2\log_2n}\bigg)2^n$. Furthermore define $E_B(2)=\{uv\in E: u\in W_i', v\in W_j' \text{ and } i\neq j\}$.
If $\mathcal{B}_{2}$ occurs then
 \begin{align}
    \big|  E_B(2)\big| &= \frac{1}{2}\sum_{i\in [z]}E(C_i,V{ \setminus} C_i) 
    \geq \frac{1}{2}\sum_{i\in [z]}(n|C_i|-|C_i|\log_2|C_i|)
    \geq \frac{1}{2}\sum_{i\in [z]}
    (n-\log_2h)|C_i|\\
&    \geq  -\frac{2^n}{2}\cdot \log_2\bigg\{1-\frac{100\log 2}{\log_2\log_2n}\bigg\} \geq
-\frac{2^n}{2} \frac{1}{\log 2} \cdot \frac{-100\log 2}{\log_2\log_2n}
= \frac{50 \cdot 2^{n}}{\log_2\log_2 n}.
 \end{align}
For the last inequality we used that $-\frac{x}{\log 2}\leq -\frac{\log(1+x)}{\log 2}=-\log_2(1+x)$.
We place independently and uniformly at random  $W_1',...,W_q'$ into one of the two sets $P_1'',P_2''$. Hence $P_1'',P_2''$ form a random partition of $W_1',...,W_q'$. In expectation half of the edges in $E_B(2)$
would cross this random partition (i.e.\@ their endpoints would belong to two components not found in the same set of the partition). Thus there exists a partition of $W_1',...,W_q'$ into two sets $P_1',P_2'$ where the number of edges that cross the partition $P_1',P_2'$ is at least $\big| \frac{E_B(2)}{2}\big | \geq \frac{25 \cdot 2^{n}}{\log_2\log_2 n}$.  Furthermore $P_1',P_2'$ induces a partition $P_1,P_2$ on the components of 
$Q^n(1.5)$, $W_1,...,W_z$, where the same number of edges 
cross the partition $P_1,P_2$. 
In the event $\mathcal{B}_2$ that number  is at least  $\frac{25 \cdot 2^{n}}{n\log_2\log_2 n}$.
Hence the occurrence of the event $\mathcal{B}_2$ implies that $\exists (P_1,P_2)\in \mathcal{P}_{1.5} $ such that $ \mathcal{B}_2(P_1,P_2) \text{ occurs } $. Thus \eqref{partition2} implies that $\mathbb{P}\big(\mathcal{B}_2\big)=o(1).$ 
\end{proof}

\section{Connectivity of $Q^n(k)$ - The Lower Bound}
We still have not shown that $Q^n(2)$ is disconnected. However this fact follows from the first part of Theorem \ref{third}  where we prove that  $\P\big( Q^n(k_0) \text{ is disconnected } \big)=1-o(1)$ plus the fact that $\P\big( Q^n(k) \text{ is disconnected }\big)$ is decreasing with respect to $k$. 
\vspace{3mm}  
\\In order to prove that $Q^n(k_0) \text{ is disconnected }$  we show that  $k_0$-cubes appear as connected components of $Q^n(k_0)$. That is there exists at least one $k_0$-cube $A\subset Q^n$ such that in the $k_0$-out model every vertex in $A$ chooses its neighbors in $A$ (there are exactly $k$ such neighbors). Moreover no vertex in $V{ \setminus} A$ chooses a neighbor in $A$. We use the following definition in order to describe the two  aforementioned  events.      
\begin{defn}
For $A,B\subset V$ and $k\in \mathbb{N}$ we let ${\cal{E}}_k (A\rightarrow B)$  be  the event  that in $Q^n(k)$ every vertex in ${A}$ chooses its neighbors from $B$.
\end{defn}
\begin{lem}\label{low}
Let $k_0=\lfloor \log_2 n -2\log_2\log_2 n\rfloor $. Then w.h.p.\@ $Q^n(k_0)$ is disconnected.
\end{lem}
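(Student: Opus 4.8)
The plan is to prove disconnectivity by showing that with high probability some $k_0$-dimensional subcube of $Q^n$ occurs as an isolated connected component of $Q^n(k_0)$; any such subcube is a proper component, so $Q^n(k_0)$ is then disconnected. Throughout put $\delta:=(\log_2 n-2\log_2\log_2 n)-k_0$, so $\delta\in[0,1)$ and $2^{k_0}=n(\log_2 n)^{-2}2^{-\delta}$. For a $k_0$-dimensional subcube $A$ every vertex of $A$ has exactly $k_0$ neighbours inside $A$ and exactly $n-k_0$ outside, and each external neighbour of $A$ has exactly one edge to $A$; consequently the event $\mathcal{I}_A$ that $A$ is a connected component of $Q^n(k_0)$ is precisely the conjunction of (i) $\mathcal{E}_{k_0}(A\to A)$ and (ii) no vertex of $V\setminus A$ selects an edge into $A$. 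Events (i), (ii) depend on the independent choices of two disjoint sets of vertices; a vertex of $A$ realises its part of (i) with probability $\binom{n}{k_0}^{-1}$, while each of the $(n-k_0)2^{k_0}$ external neighbours of $A$ avoids its edge to $A$ with probability $\tfrac{n-k_0}{n}$. Hence, letting $X$ be the number of $k_0$-dimensional subcubes $A$ with $\mathcal{I}_A$ and using that there are $\binom{n}{k_0}2^{n-k_0}$ such subcubes,
\[
\mathbb{E}X=\binom{n}{k_0}2^{n-k_0}\cdot\binom{n}{k_0}^{-2^{k_0}}\cdot\Big(\tfrac{n-k_0}{n}\Big)^{(n-k_0)2^{k_0}} .
\]

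I would first show $\mathbb{E}X\to\infty$. Taking logarithms and applying Stirling's formula to $\binom{n}{k_0}$ (so that $-2^{k_0}\ln\binom{n}{k_0}\approx -2^{k_0}k_0\ln n+2^{k_0}\ln k_0!$, whose second summand is a positive $\Theta(n\log\log n/\log n)$ term), together with $\ln(1-k_0/n)=-k_0/n+O(k_0^2/n^2)$ and $2^{k_0}=n(\log_2 n)^{-2}2^{-\delta}$, one obtains after simplification
\[
\ln\mathbb{E}X= n\ln 2\,(1-2^{-\delta})+\Theta\!\Big(\tfrac{n\log\log n}{\log n}\Big),
\]
where the second-order term is positive. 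Since $\delta\ge0$ the main term is nonnegative, so $\ln\mathbb{E}X\to+\infty$ and $\mathbb{E}X\to\infty$. This first-moment estimate is the heart of the proof: one must expand $\ln\mathbb{E}X$ precisely enough to fix the sign of the lower-order correction, because it is exactly this correction that forces $\mathbb{E}X\to\infty$ when $\delta$ is near $0$, and this is where the value of the constant in $k_0=\log_2 n-2\log_2\log_2 n$ enters.

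Next I would run the second moment method on $X$, bounding $\mathbb{E}[X^2]=\sum_{A,A'}\mathbb{P}(\mathcal{I}_A\cap\mathcal{I}_{A'})$ according to the relative position of the two subcubes. The diagonal pairs $A=A'$ contribute $\mathbb{E}X=o((\mathbb{E}X)^2)$. If $A\neq A'$ but $A\cap A'\neq\emptyset$, then $\mathbb{P}(\mathcal{I}_A\cap\mathcal{I}_{A'})=0$, since a vertex of $A\cap A'$ would have to select exactly its $k_0$ neighbours in $A$ and exactly its $k_0$ neighbours in $A'$, impossible because distinct $k_0$-dimensional subcubes through a vertex induce distinct neighbourhoods of it. If $A,A'$ are vertex-disjoint with no edge of $Q^n$ joining them, then $\mathcal{I}_A,\mathcal{I}_{A'}$ are determined by choices of disjoint vertex sets apart from the common external neighbours of $A$ and $A'$, and a common external neighbour must avoid two distinct edges (probability $\binom{n-2}{k_0}\binom{n}{k_0}^{-1}\le(\tfrac{n-k_0}{n})^2$); hence $\mathbb{P}(\mathcal{I}_A\cap\mathcal{I}_{A'})\le\mathbb{P}(\mathcal{I}_A)\mathbb{P}(\mathcal{I}_{A'})$, so all such pairs together contribute at most $(\mathbb{E}X)^2$. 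Finally, if $A,A'$ are disjoint but joined by an edge, there are at most $\binom{n}{k_0}2^{n-k_0}\cdot(n-k_0)2^{k_0}\cdot\binom{n}{k_0}$ ordered such pairs (choose $A$, an edge leaving it, and a subcube through the far endpoint); keeping for each the two internal conditions (independent, of probability $\binom{n}{k_0}^{-2^{k_0+1}}$) and, of the external conditions, only those forcing the $\ge(n-k_0-1)2^{k_0}$ external neighbours of $A$ outside $A'$ to avoid $A$ and the $\ge(n-k_0-1)2^{k_0}$ external neighbours of $A'$ outside $A$ to avoid $A'$ (a vertex lying in both families contributing a factor at most $(\tfrac{n-k_0}{n})^2$), one gets $\mathbb{P}(\mathcal{I}_A\cap\mathcal{I}_{A'})\le\binom{n}{k_0}^{-2^{k_0+1}}\big(\tfrac{n-k_0}{n}\big)^{2(n-k_0-1)2^{k_0}}\le(1+o(1))\,\mathbb{P}(\mathcal{I}_A)^2$, using $k_0 2^{k_0}/n=o(1)$. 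Thus this part contributes at most $(1+o(1))\cdot\frac{(n-k_0)2^{k_0}\binom{n}{k_0}}{\binom{n}{k_0}2^{n-k_0}}\cdot(\mathbb{E}X)^2=O(n^3 2^{-n})\,(\mathbb{E}X)^2=o((\mathbb{E}X)^2)$. Altogether $\mathbb{E}[X^2]\le(1+o(1))(\mathbb{E}X)^2$, so $\mathrm{Var}(X)=o((\mathbb{E}X)^2)$, and Chebyshev's inequality gives $\mathbb{P}(X=0)=o(1)$. Hence with high probability $Q^n(k_0)$ contains an isolated $k_0$-dimensional subcube and is disconnected.

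The step I expect to be the main obstacle is the first-moment asymptotics: the second-moment bookkeeping is routine (a $k_0$-dimensional subcube is the smallest induced subgraph of $Q^n$ in which every vertex has degree at least $k_0$, so it is the natural minimal obstruction), whereas concluding $\mathbb{E}X\to\infty$ requires computing $\ln\mathbb{E}X$ accurately enough to pin down the sign of the $\Theta(n\log\log n/\log n)$ correction term, which matters precisely because the main term $n\ln 2(1-2^{-\delta})$ can be as small as $o(n)$.
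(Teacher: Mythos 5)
Your proposal is correct and follows essentially the same route as the paper: both exhibit an isolated $k_0$-dimensional subcube via the second moment method, with the same structure of first- and second-moment estimates (internal choices with probability $\binom{n}{k_0}^{-1}$ per vertex, external neighbours avoiding their unique edge into the cube, and a case analysis showing overlapping or nearby cubes contribute negligibly). The only differences are cosmetic: the paper restricts attention to $k_0$-cubes at level $n/2$ and gets by with the crude bounds $\binom{n}{k_0}^{-1}\geq n^{-k_0}$ and $\big(1-\tfrac{k_0}{n}\big)^{n2^{k_0}}\geq 2^{-k_0 2^{k_0}}$, since the $-2\log_2\log_2 n$ term in $k_0$ leaves a $2^{\Theta(n\log_2\log_2 n/\log_2 n)}$ cushion that absorbs these losses, whereas you work with all subcubes and track the asymptotics of $\ln\mathbb{E}X$ precisely via Stirling.
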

\begin{proof}
We say that a $k_0$-cube $H$ \emph{lies} at the level  $\ell$ of  $Q^n$ if there exist
a partition of $[n]$ into 3 sets $L_H$,$F_H$ and $U_H$ of size $\ell$, $k_0$ and $n-\ell-k_0$ respectively such that 
$\forall v \in V(H)$ the entries of $v$ corresponding to the elements in $L_H$ (and $U_H$ respectively) equal to 1 (0 respectively). The entries of $v$ corresponding to elements in $F_H$ may be either 0 or 1 i.e.\@ they are free.
\vspace{3mm}
\\Let $H_1,H_2,...H_{s}$ be the $k_0$-cubes that lie at level $\frac{n}{2}$ of $Q^n$. Furthermore let $X_i$  be the indicator of the event that $H_i$ spans a connected component of $Q^n(k_0)$ and define $X$ by  $X:=\sum_{i\in[s]}X_i.$ 

In order to specify a $k_0$-cube $H$ that lies at level $\frac{n}{2}$ we can first specify $L_H$, which can be done in $\binom{n}{\frac{n}{2}}$ ways, 
and thereafter we can specify $F_H$ which can be done in $\binom{\frac{n}{2}}{k_0}$ ways. 
Thus there are  
$s=\binom{n}{\frac{n}{2}}\binom{\frac{n}{2}}{k_0}$
$k_0$-cubes that lie at level $\frac{n}{2}$. 
Let $H$ be such a $k_0$-cube. We denote by $V_H$, $\overline{V_H}$ and $N(V_H)$ the sets $V(H)$, $V{ \setminus} V(H)$ and the neighborhood of $V(H)$ found in $V{ \setminus} V(H)$ respectively. 
Observe that  $X_H=1$ if and only if the events ${\cal{E}}_{k_0 }(V_H\rightarrow V_H)$ and 
${\cal{E}}_{k_0 }\big( N(V_H)\rightarrow \overline{V_H}\big)$ occur. 
The probability that a given vertex in $V_H$ chooses its $k_0$ neighbors from $V_H$ in $Q^n(k_0)$ is $\binom{n}{k_0}^{-1}>n^{-k_0}$.
 Hence $$\mathbb{P}\big({\cal{E}}_{k_0} (V_H\rightarrow V_H) \big) \geq (n^{-k_0})^{|V_H|}=n^{-k_02^{k_0}}.$$

On the other hand if $v\in N(V_H)$ then $v$ has exactly one entry, say $q$, in $L_H\cup U_H$ such that if $q\in L_H$ then $v_q=0$ otherwise $v_q=1$. Thus $v$ has exactly one neighbor $v'\in V_H$ ($v,v'$ differ only on their $q$-th entry). Therefore since there are $|V_H|(n-k_0)$ edges coming out of $V_H$ we have that $|N(V_H)|=  |V_H|(n-k_0)=2^{k_0}(n-k_0)$ and that the probability that a given vertex in $N(V_H)$
does not choose its neighbor in $V_H$ in the $k_0$ out model  is $\binom{n-1}{k_0}/\binom{n}{k_0}=\frac{n-k_0}{n}$.
Thus 
\begin{align}
\mathbb{P}\big({\cal{E}}_{k_0} (N(V_H)\rightarrow \overline{V_H}) \big) 
= \bigg( \frac{n-k_0}{n} \bigg)^{|N(V_H)|}\geq \bigg(1- \frac{k_0}{n} \bigg)^{n2^{k_0}}
\geq   2^{-k_02^{k_0}}.
\end{align}
Observe that $k_02^{k_0}\leq (\log_2 n-2 \log_2\log_2 n)\frac{n}{{ \log_2^2n}}$. Thus, as the events ${\cal{E}}_{k_0} \big(V_H\rightarrow V_H\big)$ and ${\cal{E}}_{k_0} \big(N(V_H)\rightarrow \overline{V_H}\big)$ are independent, we have the following.
\begin{align}
    \mathbb{E}(X)&= \binom{n}{\frac{n}{2}}\binom{\frac{n}{2}}{k_0} \mathbb{P}\bigg(  {\cal{E}}_{k_0} \big(V_H\rightarrow V_H\big) \bigg) \mathbb{P} \bigg({\cal{E}}_{k_0} \big(N(V_H)\rightarrow \overline{V_H}\big)\bigg)\\
    &\geq \frac{2^{n}}{n} \cdot n^{-k_02^{k_0}}\cdot 2^{-k_02^{k_0}} \geq 2^{c_1},\
\end{align}
where 
\begin{align}
  c_1&=n-\log_2 n -(\log_2 n-2\log_2\log_2n)\cdot  \frac{n}{\log^2_2 n}
    \cdot (1+\log_2 n)   \\
    &= -\log_2 n +2\log_2\log_2n \cdot \frac{(1+\log_2 n)\cdot  n}{\log_2^2 n} -\frac{n}{\log_2 n}
       \geq \frac{n\log_2\log_2n}{\log_2n}.
\end{align}
\\
Hence $\mathbb{E}( X)\to \infty$ as $n\to \infty$.
\vspace{3mm}
\\Now let $i\in [s]$. If $V_{H_1}\cap V_{H_i}{ \neq\emptyset} $ then { if} $X_1=1$ { then} there are no edges from { $V_{H_1}\setminus V_{H_i}$} to $V_{H_i}{ \setminus} V_{H_1}$. Thus $X_i=0$ i.e. $\P(X_i=1|X_1=1)=0$. On the other hand if $V_{H_1}\cap V_{H_i}=\emptyset $ and $N(V_{H_1})\cap N(V_{H_i})=\emptyset$  then $X_1,X_i
$ are independent { i.e} $\P(X_i=1|X_1=1)=\P(X_i=1).$
Finally if $V_{H_1}\cap V_{H_i}=\emptyset $ but $N(V_{H_1})\cap N(V_{H_i})\neq \emptyset$ we have that   $\P(X_i=1|X_1=1)\leq \P(X_i=1).$ That is because given $X_1=1$ 
every vertex $v\in N(V_{H_1})\cap N(V_{H_i})$ does not choose an edge with endpoint to $V_{H_1}$. Hence they choose their $k_0$ edges from the remaining ones, which include vertices in $V_{H_i}$, any one of which has now larger probability to be chosen. In addition
in the event that $v$ chooses an edge with an endpoint in $V_{H_i}$
we have $X_i=0$. 
In all three cases for $i\in[s]{ \setminus} \{1\}$ we have that   $\P(X_i=1|X_1=1)\leq \P(X_i=1).$ Hence
\begin{align}
\mathbb{E}(X^2)&=\mathbb{E}\bigg[\bigg(\sum_{i=1}^{s}X_i\bigg)^2\bigg]= s\cdot
\bigg[\mathbb{E}(X_1^2)+ \sum_{i=2}^{s} \mathbb{E}(X_i X_1)\bigg]\\
& =s\cdot
\bigg[\mathbb{P}(X_1=1)+ \sum_{i=2}^{s} \mathbb{P}(X_i=1 \wedge X_1=1) \bigg]\\
&= s\cdot
\bigg[\mathbb{P}(X_1=1)+ \sum_{i=2}^{s} \mathbb{P}(X_i=1\vert X_1=1)\P(X_1=1)\bigg]
\\ &\leq
s\cdot \mathbb{P}(X_1=1)\bigg[1+
\sum_{i=2}^s\mathbb{P}(X_i=1)\bigg]
\leq \mathbb{E}(X)\big[1+\mathbb{E}(X)\big]
\end{align}
Therefore, since $\mathbb{E}(X)\to \infty$, we have 
\begin{align}
&\mathbb{P}(X>0)\geq \frac{\mathbb{E}(X)^2}{\mathbb{E}(X^2)}\geq
\frac{\mathbb{E}(X)^2}{ \mathbb{E}(X)\big[1+\mathbb{E}(X)\big]}
= \frac{\mathbb{E}(X)}{1+\mathbb{E}(X)} =1-o(1).
\qedhere
\end{align}
\end{proof}
\section{$k$-Connectivity of $Q^n(k)$}
The fact that the threshold for connectivity of $Q^n(k)$ is sharp follows from the second part of Theorem \ref{third} which we restate as Lemma \ref{upper}.
\begin{lem}\label{upper}
Let  $k\geq \lceil\log_2 n -2\log_2\log_2 n\rceil +1$. Then w.h.p.\@ $Q^n(k)$ is $k$-connected.
\end{lem}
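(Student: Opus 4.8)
The plan is to show that, with high probability, $Q^n(k)$ has no \emph{bad cut}: a partition $V=A\dun S\dun B$ with $A,B\neq\emptyset$, $|S|\le k-1$, and no $A$--$B$ edge in $Q^n(k)$. Since every vertex of $Q^n(k)$ has degree at least $k$ by construction, the absence of a bad cut is equivalent to $k$-connectivity. Given such a cut, after replacing $A$ by a smallest component of $Q^n(k)-S$ and $S$ by $S\cap N(A)$, we may assume $A$ induces a connected subgraph of $Q^n$, $S\subseteq N(A)$, and $1\le|A|\le 2^{n-1}$. The key observation is a factorisation: the bad event for a fixed pair $(A,S)$ is exactly $\mathcal{E}_k(A\rightarrow A\cup S)\cap\mathcal{E}_k(B\rightarrow B\cup S)$, and since disjoint sets of vertices make their $k$-out choices independently, this probability factors. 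For the $A$-side I would use $\binom{d(v,A\cup S)}{k}/\binom{n}{k}\le(d(v,A\cup S)/n)^{k}$, AM--GM over $v\in A$, and $\sum_{v\in A}d(v,A\cup S)=2d(A,A)+d(A,S)\le|A|\log_2|A|+n|S|$ (Lemma~\ref{iso}), giving $\P(\mathcal{E}_k(A\rightarrow A\cup S))\le\big((\log_2|A|+|S|)/n\big)^{k|A|}$. For the $B$-side I would use $\binom{n-d(v,A)}{k}/\binom{n}{k}\le e^{-kd(v,A)/n}$ and sum over $v\in B$, getting $\P(\mathcal{E}_k(B\rightarrow B\cup S))\le\exp\!\big(-\tfrac kn d(A,B)\big)$, where $d(A,B)\ge f(|A|)-n|S|$ with $f(x)=nx-x\log_2 x$ (Remark~\ref{iso2}).

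I would then union bound over $(A,S)$ using Corollary~\ref{trees} (at most $2^n(en)^a$ connected $a$-sets) and at most $\binom{n|A|}{|S|}$ choices of $S\subseteq N(A)$, splitting according to $a=|A|$. In an \emph{intermediate} range, roughly $n/\log_2^2 n\lesssim a\le 2^{n/2}$, one has $\log_2 a\le n/2$, so the $A$-side factor is at most $\big((n/2+k)/n\big)^{ka}\le(3/4)^{ka}$; since $k\ge\log_2 n-2\log_2\log_2 n$, this beats $2^n(en)^a\binom{n a}{|S|}$ once $a$ exceeds a suitable constant multiple of $2^{k_0}=n/\log_2^2 n$, and summing over $a$ and over $|S|\le k-1$ gives $o(1)$. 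For $a$ between $2^{n/2}$ and $2^{n-1}$ (so $B$ is also large), I would invoke a stability form of the edge-isoperimetric inequality on $Q^n$: the sets whose edge-boundary $d(A,V\setminus A)$ is within a constant factor of the extremal value $f(|A|)$ are close, in symmetric difference, to translates of coordinate sub-cubes, of which there are only $2^{O(n)}$; for these the factor $\exp(-\tfrac kn d(A,B))$ is doubly-exponentially small, and for all remaining $A$ one has $d(A,V\setminus A)$ bounded away from $f(|A|)$, making $\exp(-\tfrac kn d(A,B))$ small enough to absorb $2^n(en)^a$. (When $k$ is much larger than $\log_2 n$ the graph is dense, and these ranges are covered by cruder versions of the same estimates, with $|S|$-dependence handled separately.)

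The genuinely delicate case, and the main obstacle, is $a\lesssim 2^{k_0}=n/\log_2^2 n$, where a naive union bound over the $\sim 2^n$ positions of a small connected $A$ fails because the probability of a fixed bad configuration is only $n^{-O(k)}$, not $o(2^{-n})$. Here I would argue structurally. First, since each $v\in A$ chooses all its $k$ edges inside $A\cup S$, the graph $Q^n[A]$ has minimum degree at least $k-|S|$, so Lemma~\ref{iso} forces $a\ge 2^{k-|S|}$; and a direct geometric argument using that in $Q^n$ adjacent vertices have no common neighbour, vertices at distance $2$ have exactly $2$, and farther vertices none, shows $|S|\ge 2k-2a$ and hence $|A|\ge (k+1)/2$ for any bad cut, ruling out the truly tiny sets. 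Combining $a\ge 2^{k-|S|}$ with $a\lesssim n/\log_2^2 n$ forces $\log_2 a<k$, so $A$ — through Lemma~\ref{iso}, which is tight exactly for sub-cubes — must be close to a sub-cube of dimension $j=\lceil\log_2 a\rceil<k$, with $S$ containing at least $k-j$ neighbours of \emph{every} vertex of $A$; the hypercube geometry restricts this so severely that $A$ must in fact be essentially a sub-cube of dimension close to $k$. There are only $2^{O(n)}$ such near-sub-cube configurations, and for each the probability that it is sealed off has the same shape as the first-moment estimate in the proof of Lemma~\ref{low}; the hypothesis $k\ge\lceil k_0\rceil+1$ is precisely what makes that expectation tend to $0$ — this is the ``sharp'' half of Theorem~\ref{third}. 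I expect the bulk of the work to be making the quantitative isoperimetric-stability statement precise in the exact form needed and dovetailing it cleanly with the $k_0$-cube computation of Lemma~\ref{low}.
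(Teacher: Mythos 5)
Your first-moment machinery for moderately sized sets (Corollary~\ref{trees} to count connected sets, AM--GM plus Lemma~\ref{iso} to bound the probability) is indeed how the paper handles sets of size between roughly $n/\log_2^2n$ and $2^{n/5}$, but the rest of the plan has genuine gaps, and the quantitative claims do not survive at the threshold $k\approx\log_2 n$. In your ``intermediate'' range the inequality you rely on is arithmetically false: with $k=\log_2 n-2\log_2\log_2 n+O(1)$ you have $(3/4)^{k}=n^{-0.415+o(1)}$ while each vertex of $A$ costs a factor $en=n^{1+o(1)}$ in the enumeration, so $(3/4)^{ka}\cdot(en)^a=n^{(0.585+o(1))a}\to\infty$; the bound $\big((\log_2 a+k)/n\big)^{ka}$ only beats $(en)^a$ when $\log_2 a$ is safely below $n/2$ (the paper stops at $2^{n/5}$ for exactly this reason, see Remark~\ref{expl}). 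Worse, for large $A$ no constant-factor isoperimetric stability statement can rescue the union bound: to absorb the entropy of the family of candidate sets (whether you count connected sets, $(en)^a$, or all $a$-sets, $\binom{2^n}{a}$) you would need $d(A,V\setminus A)\gtrsim na\ln 2/k\approx na/\log_2 n$, i.e.\ a boundary exceeding the isoperimetric minimum $a(n-\log_2 a)$ by a factor of order $n/\log n$ when $a$ is of order $2^{n-1}$ --- and there are plenty of sets (unions of a few disjoint subcubes, for instance) whose boundary is within a small constant factor of the minimum yet which are far from any single subcube. So the dichotomy ``near a subcube, of which there are $2^{O(n)}$, or boundary bounded away from extremal'' is both unproven and quantitatively insufficient. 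The same criticism applies to the small range: ``$A$ must be essentially a sub-cube of dimension close to $k$'' is a stability assertion you have not proved, the enumeration of ``near-subcube configurations'' is not $2^{O(n)}$ once you allow symmetric differences of size comparable to $|A|$, and the bound $a\ge 2^{k-|S|}$ is vacuous when $|S|=k-1$.

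The paper closes precisely these gaps by a different mechanism, which your proposal lacks: a staged exposure of the $k$-out choices. It first generates $G_0\sim Q^n(k-1)$, then lets only the vertices lying in small minimal $(k-1)$-disconnectable sets of $G_0$ (the set $\mathcal{A}_0$, shown in Lemma~\ref{activek-1} to have size at most $2^{n/10}$) reveal their $k$-th edge, producing $G_1$. After this sprinkle, Lemma~\ref{notsmall} is essentially deterministic (every vertex of a small minimal $L$-disconnected set already has degree $k$, contradicting Lemma~\ref{iso}), and Lemma~\ref{notmedium} is the first-moment computation you sketched, valid up to $2^{n/5}$. The payoff is structural: for any fixed $L$ with $|L|=k-1$, every set that could still be $L$-disconnected after the final sprinkle must be a union of components of $G_1$ minus $L$, each of size at least $2^{n/5}$, so for each $L$ there are at most $2^{n2^{(j-1)n/10}}$ candidate unions $T$ with $|T|\approx 2^{jn/10}$, against a sealing probability of $\exp\{-2^{jn/10}/(20n)\}$. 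This collapse of the entropy of the union bound --- not isoperimetric stability --- is what makes the large-set case go through, and it is the missing idea in your proposal; without it (or a genuinely new substitute), the cases $|A|\gtrsim 2^{n/2-o(n)}$, and in fact the top of your intermediate range, remain unproved.
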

Let $k_1 =\lceil\log_2 n -2\log_2\log_2 n\rceil +1$ and let $k\geq k_1$.
In order to prove that $Q^n(k)$ is $k$-connected we use the first moment method. However we are not able to show in one go that the expected number of pairs $S,L$ such that  $L=k-1$ 
 and $S$ is disconnected from $V\setminus (S\cup L)$ in $Q^n(k)$ tends to zero as $n$ tends to infinity.
That is because the upper bounds that we use on the number of such pairs  and on the probability of the corresponding events occurring are not strong enough to yield the desired result. For deriving better bounds a better understanding of the geometry of the hypercube is essential (see Remark \ref{expl}).
 
In order to circumvent this problem we generate $Q^n(k)$ in three steps. We start by generating $G_{0}$ which is distributed as a $Q^n(k-1)$.
Thereafter we extend $G_{0}$ to $G_2$, which is distributed as a $Q^n(k)$, in two phases. In the  first phase we only allow vertices found in {\ a} small set of vertices that can be easily disconnected in $G_0$ to choose their $k^{th}$ edge adjacent to them. The remaining vertices will choose their $k^{th}$ edge { in} the second phase.
We show that after the first phase every set of vertices that can be easily disconnected is of large size. 
In this calculation   for upper bounding the number of pairs $S,L$ we use Corollary \ref{trees}.
Thereafter we argue that after the second phase no such set remains.
Here, in order to bound the number of pairs $S,L$ we make the following crucial observation.
Fix $L \subset V$. Then every set $S$ for which $\P\big(S$ is not connected to $V\setminus (S \cup L)\big)\neq 0$ is a union of components of the subgraph of $G_1$ induced by $G\setminus L$.
\subsection{Generation of the random graph sequence  $G_{0}\subseteq G_{1}\subseteq G_{2}$.}
We first generate $G_{0}$. Every vertex $v\in V$  independently chooses uniformly at random a set $E_{0}(v)$, consisting of $k-1$ edges, out of the $n$ edges incident to it in $Q^n$. We then define $G_{0}$ by $V(G_{0}):=V,$ $ E(G_{0}):=\underset{ v\in V}{\bigcup}{E_{0}(v)}.$ Clearly $G_{0}$ is distributed as a $Q^n(k-1).$  The following definitions are going to be { of use in} the constructions of $G_1$ and $G_2$.
\begin{defn}
Let $G$ be a graph. We say that a set $S\subset V$ can be $(k-1)$-disconnected in $G$ if there { exists} a set $L\subset V$ with $|L|=k-1$ such that there is no edge from $S$ to $V{ \setminus}(S\cup L)$.  If such a set $L$ exists we also say that $S$ is $L$-disconnected in $G$. 
\end{defn}
\begin{defn}\label{partition}
For $i\in\{0,1,2\}$ define
$$\mathcal{S}_i:=\{S\subset V: S\text { can be $(k-1)$-disconnected in $G_i$}\}.$$
Furthermore for $L\subset V$   define
$$\mathcal{S}_i(L):=\{S \subset V: S \text{ is a minimal} \text{ $L$-disconnected set in }G_i \}.$$
\end{defn}
Now let $n_s=2^{k_1-0.1}$ 
and  $\mathcal{A}_{0}=\{v\in V: \exists L,S\subset V \text{ s.t. } |L|=k-1, S\in \mathcal{S}_0(L), |S|\leq  n_s\text { and } v\in S\}.$ 
In other words  $\mathcal{A}_{0}$
consists of all the vertices that belong to some $(k-1)$-disconnected set  whose size is  relatively small (we consider these vertices to be the active ones during the construction of $G_{1}$). Every vertex $v\in \mathcal{A}_{0}$  independently chooses uniformly at random an edge $e_k(v)$ out of the $n-(k-1)$ edges that are incident to it in $Q^n$ and do not belong to $E_{0}(v)$. We let the set of newly chosen edges be $E_{1}'$ and we define $G_{1}$ by $V(G_{1}):=V$, $E(G_{1}):=E(G_{0})\cup E_{1}'$. 
\vspace{3mm}
\\ We finally extend $G_{1}$ to $G_{2}$. We let  $\mathcal{A}_{1}=V{ \setminus} \mathcal{A}_{0}$. Every vertex in $\mathcal{A}_{1}$ independently  chooses uniformly at random an edge $e_k(v)$ out of the $n-(k-1)$ edges that are incident to it in $Q^n$ and do not belong to $E_{0}(v)$. We let the set of newly chosen edges be $E_{2}'$. 
Finally we define $G_{2}$ by $V(G_{2}):=V$, $E(G_{2}):=E(G_{1})\cup E_{2}'$. Observe that once we construct $G_{2}$ we have that every vertex $v$ has chosen a set of exactly $k$ edges uniformly at random from all the edges incident it. Since $E(G_{2})=\underset{v\in V}{\bigcup}E_2(v)$ we have that $G_{2}$ is distributed as a $Q^n(k)$. 
The following remarks can be made concerning definition \ref{partition}.
\begin{rem}
For $i\in \{0,1,2\}$, every set in $\mathcal{S}_i(L)$ is connected in $G_i$ hence in $Q^n$.  Therefore its vertices span a connected subgraph of $Q^n$.
\end{rem}
\begin{rem}
For $i\in \{0,1,2\}$, every $L$-disconnected set in $G_i$ is the union of sets in $\mathcal{S}_i(L)$ hence,   $$\min\{|S|: S \in \mathcal{S}_i\}=\underset{L \in \binom{V}{\ell}}{\min}\big\{\min\{|S|:S\in \mathcal{S}_i(L)\}\big\}.$$
In addition if $S\in S_i(L)$ then for $0\leq j\leq i$, since $G_j\subset G_i$, we  have that $S$ is $L$-disconnected in $G_j$ hence it is a union of sets in $\mathcal{S}_j(L)$.
\end{rem}
\begin{lem}\label{notsmall}
 W.h.p.\@ $\nexists S,L\subset V$ s.t.\@ $|L|=k-1$,  $S\in \mathcal{S}_{1}(L)$ and $|S|<n_s$.
\end{lem}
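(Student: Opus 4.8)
\textbf{Proof plan for Lemma \ref{notsmall}.}

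The plan is to use the first moment method on the random set $E_1'$. Fix a target set $S$ with $|S| = s < n_s$ and a separator $L$ with $|L| = k-1$. If $S$ is $L$-disconnected in $G_1$, then in particular $S$ is $L$-disconnected in $G_0$, so each vertex of $S$ had its $k-1$ edges of $E_0(v)$ land inside $S \cup L$; and moreover each vertex $v \in S \cap \mathcal{A}_0$ (which, by construction, includes \emph{all} of $S$ once $S \in \mathcal{S}_0(L)$ with $|S| \le n_s$ — this is the point of defining $\mathcal{A}_0$ the way it is) also had its new edge $e_k(v) \in E_1'$ land inside $S \cup L$. So the event forces every vertex of $S$ to send all $k$ of its chosen edges into $S \cup L$, a set of size $s + k - 1$. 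Since $Q^n$ is $n$-regular, the probability that a fixed vertex $v$ chooses all $k$ of its $E_0(v) \cup \{e_k(v)\}$ edges within a prescribed set of $d(v, S\cup L) \le s + k - 2$ of its $n$ neighbors is at most $\binom{s+k-2}{k}\big/\binom{n}{k} \le \big((s+k)/n\big)^k$ (being slightly careful that $e_k(v)$ is chosen from the $n-(k-1)$ edges outside $E_0(v)$, which only helps). Raising to the power $s = |S|$ gives a bound like $\big((s+k)/n\big)^{ks}$ on $\P(S \text{ is } L\text{-disconnected in } G_1)$.

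Next I count pairs $(S,L)$. By the first remark after Definition \ref{partition}, $S$ spans a connected subgraph of $Q^n$, so by Corollary \ref{trees} there are at most $2^n (en)^s$ choices for $S$ of size $s$ containing a given structure — more precisely $2^n$ choices for a root vertex times $(en)^s$ connected sets through it, or just sum the bound $\binom{sn}{s}/[(n-1)s+1]$ over root vertices. For $L$ there are at most $\binom{n \cdot 2^n}{k-1} \le (n 2^n)^{k-1}$ choices, but in fact we only need $L$ adjacent to $S$, so there are at most $\binom{ns}{k-1} \le (ens/(k-1))^{k-1}$ relevant choices of $L$ — this tighter bound on $L$ is what makes the union bound converge. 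Combining, the expected number of bad pairs with $|S| = s$ is at most
\begin{align}
2^n \cdot (en)^s \cdot \bigg(\frac{ens}{k-1}\bigg)^{k-1} \cdot \bigg(\frac{s+k}{n}\bigg)^{ks}.
\end{align}
I would then sum over $s$ from $1$ to $n_s - 1 = 2^{k_1 - 0.1} - 1$ and show the sum is $o(1)$.

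The main obstacle — and the reason $n_s$ is chosen to be $2^{k_1 - 0.1}$ rather than something larger — is making this sum vanish despite the leading factor $2^n$, which is enormous. The term $(s+k)/n$ is at most roughly $n_s/n \approx 2^{k_1}/n$, and since $k_1 \approx \log_2 n - 2\log_2\log_2 n$ we have $2^{k_1}/n \approx 1/\log_2^2 n$, so $\big((s+k)/n\big)^{ks}$ is at most about $(\log_2^{-2} n)^{ks} = n^{-\Theta(ks/\log_2 n)}$ — wait, more usefully $(\log_2 n)^{-2ks}$. For this to beat $2^n \cdot (en)^{s+k-1} \le 2^{O(n)}$... it does \emph{not} for small $s$, which is exactly why the two-phase construction is needed: we are not trying to kill all $s$ here, only $s < n_s$, and for those $s$ the key is that $ks \ge k$ and $k \ge k_1 \approx \log_2 n$, so even $s = 1$ gives a factor $(\log_2 n)^{-2k} = (\log_2 n)^{-\Theta(\log n)} = 2^{-\Theta(\log n \log\log n)} = n^{-\omega(1)}$, which does beat $2^n (en)^{O(k)} = 2^{O(n)}$... no, $2^n$ is not beaten by $n^{-\omega(1)}$. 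The real point must be that $d(v, S \cup L)$ is typically much smaller than $s + k$ — by the isoperimetric inequality (Lemma \ref{iso}) a connected set $S$ of size $s$ with $s$ small has most of its vertices with \emph{few} edges into $S \cup L$, forcing a far better per-vertex bound; alternatively the factor $2^n$ is absorbed because a bad $S$ forces \emph{all} $n - k + 1$ non-chosen edge-slots... Rather than reconstruct the exact bookkeeping, the honest statement of the plan is: bound $\P$ by a product over $v \in S$ of $\binom{d(v,S\cup L)}{\cdot}/\binom{n}{\cdot}$, use $\sum_{v\in S} d(v, S\cup L) = \sum_v d(v,S) + d(S,L) \le s\log_2 s + (k-1)s$ from Lemma \ref{iso} to control the product, and then the $s\log_2 s \le s\log_2 n_s = O(sk)$ savings together with $k \ge k_1$ and the constant-fraction slack in the exponent of $n_s = 2^{k_1-0.1}$ (the $0.1$ buys a polynomial-in-each-term gain compounded $s$ times) will make $\sum_{s<n_s} \mathbb{E}(\#\text{bad pairs of size }s) \to 0$, including overcoming the $2^n$ prefactor because each of the $\ge n/2$ unused-and-forbidden directions at each vertex contributes a factor bounded away from $1$. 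I expect the delicate part to be exactly this interplay: showing the per-vertex probability is small enough, using isoperimetry to handle the $d(v,S\cup L)$ terms, so that the $(en)^s$ and $2^n$ factors are dominated.
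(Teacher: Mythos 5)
Your approach --- a first-moment union bound over pairs $(S,L)$ with $|S|=s<n_s$, using the probability that all $k$ chosen edges of every $v\in S$ land in $S\cup L$ --- does not work in this range, and you half-notice this yourself without resolving it. The best available probability bound per pair is of order $\big((1+o(1))\log_2 s/n\big)^{s(k-1)}=2^{-\Theta(s\log_2^2 n)}$ (this is what the paper's Lemma \ref{notmedium} extracts via isoperimetry and AM--GM), while the count of connected candidate sets $S$ carries the factor $2^n$ for the location of $S$. The product is $2^{n+O(s\log_2 n)-\Theta(s\log_2^2 n)}$, which tends to infinity unless $s\gtrsim n/\log_2^2 n$; but $n_s=2^{k_1-0.1}\approx 2^{-0.1}n/\log_2^2 n$ sits \emph{below} that threshold, so the expected number of bad pairs diverges for every $s<n_s$ (already for $s=2$ it is about $2^{\,n-2\log_2^2 n}$). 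No refinement of the $d(v,S\cup L)$ bookkeeping rescues this: the probability of one small bad configuration is only quasi-polynomially small, and there are exponentially many places to put it. This is precisely the failure of the ``one go'' first-moment argument that the paper flags at the start of Section 6, and it is the entire reason for the two-phase construction of $G_1$ and $G_2$.

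The paper's proof of Lemma \ref{notsmall} is in fact not probabilistic at all: it is a deterministic consequence of the definition of $\mathcal{A}_0$ together with the isoperimetric inequality. If $S$ with $|S|<n_s$ is $L$-disconnected in $G_1$, it was already $L$-disconnected in $G_0$, so every $v\in S$ lies in a small minimal disconnectable set of $G_0$; hence $v\in\mathcal{A}_0$ and $v$ received the extra edge $e_k(v)$ in phase 1. Thus every $v\in S$ has degree at least $k$ in $G_1$ with all its edges inside $S\cup L$, so $S\cup L$ spans at least $k|S|/2$ edges of $Q^n$, which Lemma \ref{iso} caps at $\frac{|S|+k}{2}\log_2(|S|+k)$. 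A short case analysis (using that two vertices of $Q^n$ share at most two common neighbours to exclude $|S|=o(k)$) then forces $k\le\log_2|S|+o(1)$, i.e.\ $|S|\ge 2^{k-o(1)}>2^{k_1-0.1}=n_s$, a contradiction. So the missing idea is not a sharper union bound but the observation that the event in the lemma is impossible by construction plus edge counting; the only randomness enters through the earlier, separately proved facts about $G_0$ and $\mathcal{A}_0$.
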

\begin{proof} 
Assume the claim is false and let $L$, $S$ be  a contradicting pair. Since $S$ is $L$-disconnected in $G_{1}$ it is also $L$-disconnected in $G_{0}$. Hence every vertex in $S$ belongs to some minimal $(k-1)$-disconnected set in $G_{0}$ of size at most $n_s$. Due to the construction of $G_{1}$ every such vertex has degree at least $k$ in $G_{1}$. Thus $S\cup L$ span at least $\frac{k|S|}{2}$ edges in $G_{1}$. In addition, by Lemma \ref{iso}, every set of $|S|+(k-1)$ vertices span at most 
$\frac{|S|+(k-1)}{2}\log_2(|S|+k-1)
\leq \frac{|S|+k}{2}\log_2(|S|+k)$ edges in $Q^n$.
Thus, since $G_1\subset Q_n$, we have
\begin{align}\label{small ccase}
\frac{k|S|}{2} \leq \frac{|S|+k}{2}\log_2(|S|+k).
\end{align}
Consequently one of the following two inequalities holds. Either
\begin{align}\label{case smalll}
\frac{k|S|}{4} \leq \frac{k}{2}\log_2(|S|+k) 
\hspace{5mm}\text{which implies }\hspace{5mm} |S| \leq 2\log_2(|S|+k)
\end{align}
or
\begin{align}\label{case small22}
\frac{k|S|}{4} \leq \frac{|S|}{2}\log_2(|S|+k) 
\hspace{5mm}\text{ which implies}\hspace{5mm} k \leq 2 \log_2(|S|+k).
\end{align}
If \eqref{case smalll} holds then, since $k$ is larger than $0.5\log_2n$, it must be the case that $|S|=o(k)$, in particular $|S|\leq \frac{k}{4}$. $S$ is $L$-disconnected in $G_{1}$ therefore every vertex in $S$ is adjacent, in $G_{1}$, to $k$ vertices in $S\cup L$. Therefore, since  $|S|\leq \frac{k}{4}$
, every vertex in $S$ is adjacent to at least $\frac{3k}{4}$ vertices in $L$. Hence any 2 distinct vertices in $S$ share at least 3 neighbors in $L$ which contradicts the fact that any two vertices in $Q^n$
have at most two common neighbors (note $S$ is $(k-1)$-disconnected and every vertex in $S$ has degree at least $k$ hence $|S|>1$). So it must be the case that \eqref{case small22} holds.   \eqref{case small22} implies that $|S|\geq k$ which implies 
\begin{align}\label{violation}
k\leq 2\log_2(2|S|)\leq 3\log_2|S|.
\end{align}
\eqref{small ccase} can be rewritten as 
\begin{align}\label{later}
\frac{k|S|}{2} \leq \frac{|S|+k}{2}\log_2(|S|+k) = \frac{|S|+k}{2}\log_2\bigg(1+\frac{k}{|S|}\bigg)+\frac{|S|}{2}\log_2|S| +\frac{k}{2}\log_2|S|.
\end{align}
Dividing throughout by $\frac{|S|}{2}$, setting $u=3\log_2|S|$ and using that  $k\leq u$
we get that in the case that the statement of our lemma is false the  following inequality holds
\begin{align}\label{laterr2}
k \leq \bigg(1+\frac{u}{|S|}\bigg)\log_2\bigg(1+\frac{u}{|S|}\bigg)
+\log_2|S|+\frac{u}{|S|}\log_2|S|=\log_2|S|+o(1).
\end{align}
Therefore a crude lower bound on $|S|$ is $|S|\geq 2^{0.9k}\geq  n^{0.8}$. 
When $|S|\geq n^{0.8}$ we have that 
 $\big(1+\frac{u}{|S|}\big)\log_2\big(1+\frac{u}{|S|}\big)
+\frac{u}{|S|}\log_2|S|
\leq \big(1+\frac{u}{|S|}\big)\cdot \frac{1}{\log 2} \cdot\frac{u}{|S|}
+\frac{u}{|S|}\log_2|S|=o({ n^{-0.6}}).$ Therefore we can replace the $o(1)$ term in \eqref{laterr2} by $o(n^{-0.6})$.
Thus as $|S|\leq n_s$ \eqref{laterr2} implies that   
$$k \leq \log_2|S|+o(n^{-6})\leq \log_2 n_s+n^{-0.6}=k_1-0.1+n^{-0.6}<k,$$
which give us a contradiction. 
\end{proof}
\begin{rem}\label{sizeproblem}
Let $k\geq k_1$. If $S,L\subset V$ are such that $|L|=k-1$ and $S\in \mathcal{S}_{1}(L)$ then { the} same argument we { used} to derive {\eqref{violation}  implies} that $k\leq  3\log_2|S|$. 
\end{rem}
\begin{lem}\label{notmedium}
Let ${ n_1=2^{\frac{n}{5}}}$. Then w.h.p.\@ $\nexists S,L\subset V$ s.t.\@ $|L|=k-1$,  $S\in \mathcal{S}_{1}(L)$ 
and $|S|\in [n_s,n_1].$
\end{lem}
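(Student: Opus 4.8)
The plan is to bound, via the first-moment method, the expected number of pairs $(S,L)$ with $|L|=k-1$, $S\in\mathcal{S}_1(L)$ and $|S|\in[n_s,n_1]$, and show this expectation is $o(1)$. First I would fix a target size $s\in[n_s,n_1]$ and count the ways to choose $S$ and $L$. By Remark~\ref{sizeproblem} every such $S$ spans a connected subgraph of $Q^n$, so by Corollary~\ref{trees} the number of choices for $S$ of size $s$ is at most $2^n(en)^{s}$ (pick a vertex, then the connected set containing it); and $L$ is chosen among at most $\binom{2^n}{k-1}\le 2^{n(k-1)}\le 2^{nk}$ sets. Next I would bound the probability that a \emph{fixed} candidate $(S,L)$ has $S$ being $L$-disconnected in $G_1$: every vertex of $S$ must send all of its $k$ chosen edges into $S\cup L$. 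The edge-boundary $d(S,V\setminus(S\cup L))$ is large — by the isoperimetric inequality (Lemma~\ref{iso}, Remark~\ref{iso2}) we have $d(S,V\setminus S)\ge n|S|-|S|\log_2|S|$, and removing the at most $(k-1)|S|$ edges into $L$ still leaves $d(S,V\setminus(S\cup L))\ge |S|(n-\log_2|S|-(k-1))\ge |S|(n-\log_2 n_1 - k)\gtrsim \tfrac{3n}{5}|S|$, using $|S|\le n_1=2^{n/5}$ and $k=O(\log n)$. Each such boundary edge, incident to a vertex $v\in S$, is \emph{avoided} by $v$'s $k$-out choice with probability $\binom{n-1}{k}/\binom{n}{k}=(n-k)/n$ per boundary edge at $v$ (and these are independent across $v$), so the probability that $v$ avoids all $b_v:=d(v,V\setminus(S\cup L))$ of them is at most $\big(1-\tfrac{k}{n}\big)^{b_v}\le \exp(-\tfrac{k}{2n}b_v)$ once $k\le n/2$, hence the probability for all of $S$ is at most $\exp\!\big(-\tfrac{k}{2n}\sum_v b_v\big)\le \exp\!\big(-\tfrac{k}{2n}\cdot\tfrac{3n}{5}|S|\big)=\exp(-\tfrac{3k}{10}|S|)$.

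Putting the pieces together, the expected number of bad pairs of size $s$ is at most
\begin{align*}
2^n\cdot(en)^{s}\cdot 2^{nk}\cdot e^{-\frac{3k}{10}s}
= 2^{n(k+1)}\cdot\exp\!\Big(s\big(1+\ln(en)-\tfrac{3k}{10}\big)\Big).
\end{align*}
Since $k\ge k_1=\lceil\log_2 n-2\log_2\log_2 n\rceil+1$ we have $k\ge(1-o(1))\log_2 n$, so $\tfrac{3k}{10}\ge(1-o(1))\tfrac{3}{10\ln 2}\ln n$, which dominates $1+\ln(en)=(1+o(1))\ln n$ — \textbf{this is where the argument is tight and is the main obstacle}: the naive per-boundary-edge bound only gives an exponent $\sim -k$ while the $S$-count costs $\sim s\ln n$ and the $L$-count costs $\sim nk$, so the $e^{-(3k/10)s}$ factor alone does not beat $2^{nk}$ when $s$ is as small as $n_s=2^{k_1-0.1}=\Theta(n/\log^2 n)$. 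The resolution is that for $s\ge n_s$ we also get to use $G_1$'s extra edges only implicitly; more usefully, we must exploit that $s$ is \emph{not tiny}: indeed $e^{-(3k/10)s}\le 2^{-(3k/(10\ln2)) s}$ and for $s\ge n_s\approx n/(2\log^2 n)$ the product $\tfrac{3k}{10\ln 2}\cdot s \gtrsim \tfrac{3}{20\ln 2}\cdot \tfrac{n}{\log n}$, which — while $o(nk)$ — forces a sharper accounting: one replaces the crude $\binom{2^n}{k-1}$ bound on $L$ by the observation that $L$ must lie inside $N(S)$, so there are at most $\binom{|S|n}{k-1}\le (sn)^{k}$ choices for $L$, costing only $\exp(k(\ln s+\ln n))=\exp(O(k\log n))=\exp(O(\log^2 n))$ rather than $2^{nk}$.

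With the improved bound $L\subset N(S)$, the expected number of bad pairs of size $s$ becomes at most
\begin{align*}
2^n\cdot(en)^{s}\cdot (sn)^{k}\cdot e^{-\frac{3k}{10}s}
\le 2^{n}\cdot \exp\!\Big(s\big(1+\ln(en)\big)+k\ln(sn)-\tfrac{3k}{10}s\Big).
\end{align*}
Now for $s\ge n_s$ the term $\tfrac{3k}{10}s$ beats $s(1+\ln(en))$ since $k/\ln 2 \ge (1-o(1))\log_2 n$ makes $\tfrac{3k}{10}\ge 3(1-o(1))\log_2 n/(10)\cdot\ln 2^{-1}$... more cleanly: $\tfrac{3k}{10}-1-\ln(en) = \tfrac{3k}{10}-\ln n - O(1)$ and $\tfrac{3k}{10}\ge \tfrac{3}{10}\log_2 n = \tfrac{3}{10\ln 2}\ln n \approx 0.43\ln n < \ln n$, so in fact this does \emph{not} close either — confirming that the truly essential input is that $|S|\ge n_s$ together with the stronger isoperimetric constant: when $s$ is in the middle range one should not use $d(S,V\setminus(S\cup L))\ge \tfrac{3n}{5}|S|$ but rather note that for $s\le n_1=2^{n/5}$ Lemma~\ref{iso} gives $d(S,V\setminus S)\ge |S|(n-\tfrac{n}{5})=\tfrac{4n}{5}|S|$, and subtracting $(k-1)|S|=o(n)|S|$ leaves $(1-o(1))\tfrac{4n}{5}|S|$; more importantly the $n$ boundary edges at a typical vertex mean the correct probability bound is $\big(\tfrac{n-k}{n}\big)^{\text{(boundary at }v)}$ summed to $\exp\!\big(-(1-o(1))\tfrac{k}{n}\cdot\tfrac{4n}{5}|S|\big)=\exp(-(1-o(1))\tfrac{4k}{5}|S|)$. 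Then $\tfrac{4k}{5}s \ge \tfrac{4}{5\ln 2}(\ln n)(1-o(1)) s > (1+\ln(en)) s$ since $\tfrac{4}{5\ln 2}>1$, and the $s$-summation term is defeated; the remaining factors $2^n\cdot(sn)^k = \exp(n\ln2+O(\log^2 n))$ are defeated by taking just a $(1+\varepsilon)$-fraction of the $\exp(-\tfrac{4k}{5}s)$ budget once $s\ge n_s=\Theta(n/\log^2 n)$, because $\tfrac{4k}{5}\cdot\tfrac{n}{\log^2 n}\gg n$. Finally I would sum over $s\in[n_s,n_1]$ (a polynomial-in-$2^n$ number of terms, each super-exponentially small) to conclude the expectation is $o(1)$, and invoke Markov's inequality. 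The one genuinely delicate point, which I would handle carefully rather than with the crude estimates above, is getting the constant in the $d(S,V\setminus(S\cup L))$ lower bound large enough (via Remark~\ref{iso2} applied on $[n_s,n_1]$) that $\tfrac{k}{n}d(S,V\setminus(S\cup L))$ strictly exceeds the combined entropy cost $\ln 2 \cdot n/s + \ln(en) + (k/s)\ln(sn)$ uniformly for $s\ge n_s$; this is exactly why the threshold $k_1$ and the cutoffs $n_s=2^{k_1-0.1}$, $n_1=2^{n/5}$ are chosen as they are.
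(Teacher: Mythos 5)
Your overall strategy (first moment over pairs $(S,L)$, enumeration of connected $S$ via Corollary \ref{trees}, restriction of $L$ to $N(S)$, isoperimetry for the boundary) matches the paper's, but the probability estimate you use is too weak and the argument does not close in exactly the range that matters. You bound the chance that a vertex $v\in S$ keeps all its choices inside $S\cup L$ by treating each of its $b_v$ boundary edges as avoided with probability $\tfrac{n-k}{n}$, i.e.\ by $\binom{n-b_v}{k}/\binom{n}{k}\le(1-\tfrac kn)^{b_v}\le e^{-kb_v/n}$. Since $b_v\le n$, this can never give more than a factor $e^{-k}=n^{-\log_2e+o(1)}\approx n^{-1.44}$ per vertex, against an entropy cost of $(en)$ per vertex; the net gain per vertex is only $n^{-0.15+o(1)}$. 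Over $s\ge n_s=\Theta(n/\log^2n)$ vertices the total gain is $\exp\{-\Theta(n/\log n)\}$, which does \emph{not} defeat the $2^n$ factor for positioning $S$ in the cube. Your closing assertion that $\tfrac{4k}{5}\cdot\tfrac{n}{\log^2n}\gg n$ is an arithmetic error: with $k=\Theta(\log n)$ this quantity is $\Theta(n/\log n)=o(n)$. So the argument genuinely fails for all $s$ between $n_s$ and roughly $n/\log n$ (it is fine for $s$ large). A secondary slip: in $G_1$ only vertices of $\mathcal{A}_0$ have made $k$ choices, the rest have made $k-1$, so your exponent should be $k-1$, which only worsens matters.

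The missing idea is to bound the same event the other way around: $v$'s $k-1$ choices must all land in the \emph{small} set $E(v,S\cup L)$, so the per-vertex probability is $\binom{d(v,S\cup L)}{k-1}\big/\binom{n}{k-1}\le\big(d(v,S\cup L)/n\big)^{k-1}$. The isoperimetric inequality bounds the number of edges inside $S\cup L$, hence the \emph{average} of $d(v,S\cup L)$ over $v\in S$ by $(1+o(1))\log_2 s$, and the AM--GM inequality then gives $\prod_{v\in S}d(v,S\cup L)\le\big((1+o(1))\log_2 s\big)^{s}$. This yields a per-vertex probability of order $(\log_2 s/n)^{k-1}=\exp\{-\Theta(\log^2n)\}$ for $s\le n^2$ — super-polynomially small rather than your polynomially small $e^{-\Theta(k)}$ — and the resulting total $2^{n+2s\log_2n}(\,(1+o(1))\log_2 s/n)^{s(k-1)}$ is $2^{-\Omega(n)}$ already at $s=n_s$. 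This is precisely the computation the paper carries out (its displays \eqref{armean}--\eqref{appl} and the bound on $c_2(s)$), and it is the step your proposal needs to be repaired with.
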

\begin{proof}
In proving the above statement we observe that for every $S,L\subset V$ such that $S\in \mathcal{S}_1(L)$ and $|L|=k-1$ then we have the following. 
There exists some $L'\subset L$ such that the induced subgraph of $Q^n$ on  $S\cup L'$ is connected
and in $G_1$ every vertex in $S$ is adjacent to vertices only in $S\cup L'$.
For $s\in \mathbb{N}$ let $\mathcal{D}_s=\{(S,L'): S,L'\subset V, |L'|\leq k-1, |S|=s \text{ and } S\cup L' \text{ is connected in }Q^n\}$. 
Corollary \ref{trees} implies that 
for fixed $v\in V$  there are at most $\underset{j\in[k-1]}{\sum}(en)^{s+j}$ choices for $S\cup L'$ such that $v\in S\cup L'$ and $s+1\leq |S\cup L|\leq s+(k-1)$. Thereafter there are at most $\binom{s+k-1}{k-1}\leq (2s)^k$ ways to choose $L'$ out of $S\cup L'$.
 Thus when $k\leq 3\log_2 s$ and $s\leq n^2$ we have that  
\begin{align}\label{basesmall}
|\mathcal{D}_s|\leq \sum_{j\in[k-1]}2^n(en)^{s+j}(2s)^k&\leq k2^n(en)^{s+k}(n^3)^{k}\leq 2^n(en)^{s+5k} \leq 2^{n+2s\log_2n}.
\end{align}
On the other hand when $k\leq 3\log_2 s$ and $n^2<s$ we have 
\begin{align}\label{baselarge}
|\mathcal{D}_s|\leq \sum_{j\in[k-1]}2^n(en)^{s+j}(2s)^k&\leq k2^n(en)^{s+k}2^{2k\log_2s}\leq k2^n(en)^{s+k}2^{s} \leq 2^{n+2s\log_2n}.
\end{align}
At the same time Lemma \ref{iso} implies that  for every $(S,L')\in \mathcal{D}_s$ we have 
\begin{align}\label{armean}
\frac{1}{s}\sum_{v\in S}d(v,S\cup L')&\leq\frac{1}{s}\sum_{v\in S\cup L'}d(v,S\cup L') 
\leq\frac{1}{s}\cdot(s+k) \log_2(s+k)=(1+o(1))\log_2s. 
\end{align}
Therefore by the arithmetic-geometric mean inequality we get 
\begin{align}\label{geo}
\big \{(1+o(1))\log_2s\big \}^s\geq \bigg( \frac{1}{s}\sum_{v\in S}d(v,S\cup L')\bigg)^s \geq \prod_{v\in S} d(v,S\cup L').
\end{align}
\eqref{geo} implies 
\begin{align}\label{appl}
\prod_{v\in S}
    \Bigg(\frac{\binom{d(v,S\cup L')}{k-1}}{\binom{n}{k-1}}\Bigg)\leq
   \prod_{v\in S}\Bigg(\frac{d\big(v,S\cup L' \big)}{n}\Bigg)^{k-1}   
\leq    \bigg(\frac{(1+o(1))\log_2 s}{n}\bigg)^{s(k-1)} 
\end{align}
Hence, using that $k\leq 3\log_2s$ (see Remark \ref{sizeproblem}), we have
\begin{align}
\P\big(\exists & L,S\subset V: |L|=k-1, S\in \mathcal{S}_{1}(L) \text{ and }
|S|=s \in[n_s, n^2)\big)  \\
&\leq \P\big(\exists s \in[n_s, n^2) \text{ and } \exists S,L'\subset V :(S,L')\in \mathcal{D}_s \text{ and } E_0(v)\subset (S\cup L')\times \{v\}\big)\\
    &\leq \sum_{s=n_s}^{n^2} \sum_{(S,L')\in \mathcal{D}_s}
    \Bigg(\frac{\binom{d(v,S\cup L')}{k-1}}{\binom{n}{k-1}}\Bigg) 
      \leq\sum_{s=n_s}^{n^2} 2^{n+2s\log_2 n} \bigg(\frac{(1+o(1))\log_2 s}{n}\bigg)^{s(k-1)}  \\
    &\leq\sum_{s=n_s}^{n^2} 2^{n+2s\log_2 n} \bigg(\frac{2\log_2 n^2}{n}\bigg)^{s(k_1-1)}
    \leq \sum_{s=n_s}^{n^2} 2^{c_2(s)}=o(1).
\end{align}
{In} the second line we used  \eqref{basesmall} and  \eqref{appl}. Furthermore { in} the last line we used that  for $s\in [n_s,n^2],$
\begin{align}
    c_2(s)&=n+2s\log_2 n -\big[\log_2 n-\log_2( 4\log_2 n) \big]s (k_1-1)
    \\
    &\leq n+2n_s  \log_2 n -\big[\log_2 n-\log_2( 4\log_2 n) \big]n_s (k_1-1)\\
    &\leq n+2\cdot \frac{4n}{\log_2^2n}  \log_2 n -\big[1-o(1)\big]\cdot \log_2 n       \cdot \frac{2^{0.9} n}{\log_2^2n} \log_2 n    \\
    &= -(2^{0.9}-1)n+o(n).
\end{align}
{In} the third line we used that $ \frac{2^{0.9}n}{\log_2^2n} \leq n_s \leq \frac{4n}{\log_2^2n}.$
Similarly, we have 
\begin{align}
\P\big(&\exists L,S\subset V: |L|=k, S\in \mathcal{S}_{1}(L) \text{ and }
|S|=s \in[n^2,n_1)\big)  \\ 
&\leq\sum_{s=n^2}^{n_1} 2^{n+2s\log_2 n}
\bigg(\frac{(1+o(1))\log_2 s}{n}\bigg)^{s(k-1)}  
\\
   &\leq\sum_{s=n^2}^{n_1} 2^{n+2s\log_2 n} \bigg(\frac{(1+o(1))\log_2 n_1}{n}\bigg)^{s(k_1-1)}
    =\sum_{s=n^2}^{n_1} 2^{n+2s\log_2 n} \bigg(\frac{(1+o(1))}{5}\bigg)^{s(k_1-1)}    
    \\
    &\leq \sum_{s=n^2}^{n_1} 2^{c_3(s)} \leq \sum_{s=n^2}^{n_1}2^{ -(\log_2 5-2)n^2}=o(1).
\end{align}
{ In} the last line we used that for $s\in [n^2,n_1]$ 
\begin{align}
c_3(s)&= {n+2s\log_2 n} -(1+o(1))\log_2 5 \cdot  s(k_1-1)\\
& \leq -(1+o(1))(\log_2 5-2)s\log_2n  \leq -(\log_2 5-2)n^2.\qedhere
\end{align}
\end{proof}
\begin{rem}\label{expl}
We can  extend the above calculations to pairs of sets $S,L$ where $|S|$ satisfies 
$\frac{\log_2 |S|}{n}-2=o\big( \frac{1}{n^2}\big).$
In order to implement similar calculations for larger sets $S$ we would have to sharpen the bounds derived in \eqref{baselarge},\eqref{geo} and \eqref{appl}. Observe that \eqref{baselarge} counts each  set multiple times. Moreover we expect that as  the size of the sets that we consider in \eqref{baselarge} is increased the proportion of the 
upper bound derived over the true value is also increased.
 At the same time observe that for fixed $S,L$ in order to bound the probability that $S$ is $L$-disconnected we do not use any information about the sets $S,L$ other than their sizes. 
 On the other hand \eqref{geo}, \eqref{appl} indicate that we can relate this probability with the quantity 
 $|E\big(S,V\setminus (S\cup L)\big)|$. 
\end{rem}
{In} the proof of Lemma \ref{upper} we are also going to use the following lemma.
\begin{lem}\label{activek-1}
W.h.p\@ $|\mathcal{A}_{0}|\leq 2^{\frac{n}{10}}.$
\end{lem}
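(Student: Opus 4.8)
The plan is to bound $\mathbb{E}(|\mathcal{A}_{0}|)$ and apply Markov's inequality. Recall that $\mathcal{A}_{0}$ consists of all vertices $v$ lying in some minimal $(k-1)$-disconnected set $S$ of $G_0$ with $|S|\le n_s=2^{k_1-0.1}$. So it suffices to show that the expected number of such pairs $(S,L)$ with $|L|=k-1$, $S\in\mathcal{S}_0(L)$, $|S|\le n_s$, weighted by $|S|$, is at most $2^{n/10}/n$ (say), and then $\mathbb{P}(|\mathcal{A}_0|>2^{n/10})\le \mathbb{E}(|\mathcal{A}_0|)/2^{n/10}=o(1)$.

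To estimate this expectation I would re-use the counting and probability bounds already established in the proof of Lemma \ref{notmedium}, but now applied to $G_0$ (a $Q^n(k-1)$) rather than $G_1$. First, as in that proof, any $S\in\mathcal{S}_0(L)$ with $|L|=k-1$ determines a subset $L'\subseteq L$ with $S\cup L'$ connected in $Q^n$ and every vertex of $S$ choosing all of its $k-1$ edges inside $S\cup L'$; moreover the isoperimetric bound (Lemma \ref{iso}, Remark \ref{iso2}) combined with minimality forces $k\le 3\log_2|S|$ exactly as in Remark \ref{sizeproblem}, so in particular $|S|\ge 2^{k_1/3}\ge n^{0.3}$ and the small-$|S|$ regime is vacuous. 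For the number of configurations I would use $|\mathcal{D}_s|\le 2^{n+2s\log_2 n}$ from \eqref{basesmall}, and for the probability that a fixed such configuration is realized in $G_0$ I would use the analogue of \eqref{appl}: for each $v\in S$ the chance that its $k-1$ chosen edges all land in $S\cup L'$ is $\binom{d(v,S\cup L')}{k-1}\big/\binom{n}{k-1}\le (d(v,S\cup L')/n)^{k-1}$, and the arithmetic–geometric mean inequality together with \eqref{armean} gives $\prod_{v\in S}(d(v,S\cup L')/n)^{k-1}\le\big((1+o(1))\log_2 s/n\big)^{s(k-1)}$.

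Putting these together, the weighted expected count is at most
\begin{align}
\sum_{s\le n_s} s\cdot 2^{n+2s\log_2 n}\left(\frac{(1+o(1))\log_2 s}{n}\right)^{s(k-1)}
\le \sum_{s\le n_s} s\cdot 2^{c_2(s)},
\end{align}
and since $s\le n_s\le 4n/\log_2^2 n$ we have $\log_2 s\le 2\log_2 n$, so the exponent $c_2(s)$ is, just as in Lemma \ref{notmedium}, at most $n+2s\log_2 n-[1-o(1)](\log_2 n)\, s(k_1-1)$, which is negative and in fact at most $-(2^{0.9}-1)n+o(n)$ for all $s$ in the relevant range once $s$ exceeds a small threshold; for the $o(n)$-sized lower tail of $s$ (values $n^{0.3}\le s=o(n)$) one checks directly that $c_2(s)\le n+2s\log_2 n - (1-o(1))(\log_2 n)(0.3 n - k)\cdots$, which is still $\le -\Omega(n^{0.3}\log n)$, well below $n/10$. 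Hence $\mathbb{E}(|\mathcal{A}_0|)\le \sum_{s\le n_s} s\cdot 2^{c_2(s)} = o(1)$, which is far smaller than $2^{n/10}$, and Markov gives the claim.

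The only genuinely delicate point is making sure the dominant term in the sum is controlled uniformly over the whole range $n^{0.3}\le s\le n_s$: the exponent $c_2(s)$ must be verified to stay well below $\tfrac{n}{10}$ for every such $s$, which is where one uses $k\ge k_1=\log_2 n-2\log_2\log_2 n+\Theta(1)$ to beat the $2s\log_2 n$ term. Everything else is a transcription of the estimates already proved for $G_1$ to the slightly smaller graph $G_0$, using only that $G_0\subseteq Q^n$ and that each vertex of $S$ makes $k-1$ independent-across-vertices uniform choices; no new structural input about the cube is needed.
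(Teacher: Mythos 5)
There is a genuine gap in the middle of your argument: the claim that the exponent $c_2(s)=n+2s\log_2 n-[1-o(1)](\log_2 n)\,s(k_1-1)$ is negative ``for all $s$ in the relevant range once $s$ exceeds a small threshold'' is false, and the range where it fails is exactly the range that your weaker lower bound $|S|\ge n^{0.3}$ leaves open. Writing $c(s)\approx n-(1-o(1))\,s\log_2^2 n$, the $+n$ term (the union bound over the $2^n$ possible root vertices of the connected set $S\cup L'$) dominates unless $s\gtrsim n/\log_2^2 n\approx 2^{k_1-1}$. In Lemma \ref{notmedium} this is harmless because there $s\ge n_s=2^{k_1-0.1}>2^{k_1-1}$, which is precisely where the factor $2^{0.9}$ in $-(2^{0.9}-1)n$ comes from; here, however, $s\le n_s$, so that computation points the wrong way. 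Concretely, for $s=n^{1/2}$ your bound on the expected number of bad configurations is about $2^{\,n-n^{1/2}\log_2^2 n}=2^{(1-o(1))n}$, far above $2^{n/10}$, and your assertion that the ``lower tail'' $n^{0.3}\le s=o(n)$ contributes only $2^{-\Omega(n^{0.3}\log n)}$ simply drops the $+n$ term. Moreover the conclusion $\mathbb{E}(|\mathcal{A}_0|)=o(1)$ cannot be right: subcubes of dimension $k-1$ that form components of $G_0$ put all their vertices into $\mathcal{A}_0$, and (as in the first--moment computation of Lemma \ref{low}) their expected number tends to infinity. If $\mathcal{A}_0$ were empty w.h.p.\ the whole two-phase construction of $G_1$ would be pointless.

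The missing ingredient is the paper's refinement of the degree/isoperimetry comparison: writing $|S|=2^{k-1}-a$ and expanding $\frac{(k-1)|S|}{2}\le\frac{|S|+k}{2}\log_2(|S|+k)$ around $|S|=2^{k-1}$ yields $a\le 2k^2$, i.e.\ every minimal $(k-1)$-disconnected set $S$ in $G_0$ with $|S|\le n_s$ in fact satisfies $|S|\ge n_\ell:=2^{k_1-1}-2k_1^2$. Restricting the sum to $s\in[n_\ell,n_s]$ makes the exponent $c_4(s)=o(n)$ --- still positive, of order $n\log_2\log_2 n/\log_2 n$, so the expectation is \emph{not} $o(1)$ but only $\le 2^{n/11}$ --- and Markov's inequality with the threshold $2^{n/10}$ then finishes. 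You need to add this size restriction; the rest of your transcription of the Lemma \ref{notmedium} machinery from $G_1$ to $G_0$ is fine.
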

\begin{proof}
Let $ S,L \subset V$ be such that $|L|< k$, $S\in \mathcal{S}_{0}(L)$ and $|S|\leq n_s$. The same arguments used to derive \eqref{small ccase} imply 
\begin{align}\label{base1}
\frac{(k-1)|S|}{2} &\leq \frac{|S|+(k-1)}{2}\log_2(|S|+(k-1))\\
&\leq \frac{|S|+k}{2}\log_2(|S|+k)
\end{align}
Let $a=2^{k-1}-|S|{ \geq 0}$. 
By dividing throughout by $\frac{|S|}{2}$ and then substituting $|S|=2^{k-1}-a$ we have
\begin{align}
    k-1&\leq\bigg(1+\frac{k}{|S|}\bigg)\log_2(|S|+k)
    =\bigg(1+\frac{k}{2^{k-1}-a}\bigg)\log_2\bigg[2^{k-1}\bigg(1+\frac{k-a}{2^{k-1}}\bigg)\bigg] \\
   &\leq     (k-1)\bigg(1+\frac{k}{2^{k-1}-a}\bigg)+2\log_2\bigg(1+\frac{k-a}{2^{k-1}}\bigg)\\
&\leq     k-1+\frac{(k-1)k}{2^{k-1}-a}+4\frac{k-a}{2^{k-1}}.\label{zx}
\end{align}
{ In} the second line at the calculations above we used that $k=o(|S|)=o(2^{k-1}-a)$ (see Remark \ref{sizeproblem}).
 Furthermore
{in} the last inequality we used that $\forall x \in \mathbb{R}$ we have that $\log(1+x)\leq x.$ Therefore $2x >\frac{x}{\log2}\geq \frac{\log(1+x)}{\log_2}=\log_2(1+x)$.
 \eqref{zx} implies
\begin{align}
0\leq
 \frac{2^{k-1}k(k-1)}{2^{k-1}(2^{k-1}-a)} +    \frac{\big(2^{k-1}-a\big)4(k-a)}{2^{k-1}(2^{k-1}-a)}  
   \leq\frac{2^{k-1}(k^2+4k-4a)}{2^{k-1}\big(2^{k-1}-a\big)}
\end{align}
Hence $ 0\leq  k^2+4k-4a $  which implies that $a\leq 2k^2$. Thus, since
 $|S|=2^{k-1}-a$, we have $|S|\in [2^{k-1}-2k^2,n_s]\subseteq [2^{k_1-1}-2k_1^2,n_s].$ Let $n_\ell=2^{k_1-1}-2k_1^2$. 
Using, { in} the second line of the calculations below, \eqref{appl}, \eqref{basesmall} and that $(1+o(1))\log_2s \leq \log_2^2n$ for $s\leq n_s$  we have
\begin{align}
    \mathbb{E}(|\mathcal{A}_{0}|)& 
    \leq n_s\mathbb{E}\big({  |\{S: \exists L\subset V, |L|= k-1, S\in \mathcal{S}_{0}(L) \text{ and }  |S|\in [n_\ell,n_s]\big\}|})\\
   &\leq n_s \sum_{s=n_\ell}^{n_s} \sum_{(S,L')\in \mathcal{D}_s}\prod_{v\in S}
    \Bigg(\frac{\binom{d(v,S\cup L)}{k-1}}{\binom{n}{k-1}}\Bigg) 
   \leq  n_s \sum_{s=n_\ell}^{n_s} 2^{n+2s\log_2 n} \bigg(\frac{\log_2^2 n}{n}\bigg)^{s(k_1-1)}    \\
    &\leq n_s \sum_{s=n_\ell}^{n_s} 2^{c_4(s)}\leq n_s\sum_{s=n_\ell}^{n_s} 2^{\frac{n}{12}}\leq 2^{\frac{n}{11}}.
\end{align}
{ In} the last line we used that for $s\in[n_\ell,n_s]$
\begin{align}
c_4(s)&=n+2s\log_2 n -(\log_2 n-2\log_2 \log_2 n )s(k_1-1)\\
&\leq n+n_\ell \big[ 2\log_2 n -(\log_2 n-2\log_2 \log_2 n ) (k_1-1)\big]\\
      &\leq  n+\bigg(\frac{n}{\log_2^2n}-2\log_2^2n \bigg) 
 \big(2\log_2n  -{ \log_2^2 n} +4 \log_2n \cdot \log_2\log_2 n \big)=o(n).
\end{align}
{ In} the last line we used that $\frac{n}{\log_2^2n}-2\log_2^2n \leq 2^{k_0}-2k_0^2 \leq n_\ell.$
Hence by Markov's inequality we have that $\mathbb{P}\big(|\mathcal{A}_{0}|\geq 2^{\frac{n}{10}}\big) 
\leq \mathbb{E}(|\mathcal{A}_{0}|) 2^{-\frac{n}{10}}=o(1)$.
\end{proof}
{ In} the proof of Lemma \ref{upper} we are going to use the following definition. 
\begin{defn}
For $L\subset V$, $|L|=k-1$ and $2\leq j \leq 9$, with $u_j=  \min\big\{2^{\frac{(j+1)n}{10}},2^{n-1}\big\} $, we define the sets  $\mathcal{U}^j_{1}(L)$ as follows,
$$\mathcal{U}^j_{1}(L):=\bigg\{T\subset V: T\text{ is a union of sets in } \mathcal{S}_{1}(L) \text{ and } |T|\in\big[2^{\frac{jn}{10}},u_j \big]
\bigg\}.$$
\end{defn}
Observe that Lemmas \ref{notsmall}, \ref{notmedium} imply that 
$\mathcal{S}_{1}(L)$ consists of disjoint sets of size at least $2^{\frac{n}{5}}$. 
Thus $|\mathcal{S}_{1}(L)|\leq { 2^{\frac{4n}{5}}}$.
Furthermore if $T\in \mathcal{U}^j_{1}(L)$, then $T$ is  the union of at most $2^{\frac{(j-1)n}{10}}$ sets in $\mathcal{S}_{1}(L)$. 
Therefore  
\begin{align}\label{sizeu}
|\mathcal{U}^j_{1}(L)|&\leq 
\sum_{h=1}^{2^{\frac{(j-1)n}{10}}} \binom{ |S|  }{h}
\leq \sum_{h=1}^{2^{\frac{(j-1)n}{10}}} \binom{ 2^n }{h}
\leq \sum_{h=1}^{2^{\frac{(j-1)n}{10}}}\frac{2^{nh}}{{ h!}}\leq 2^{n2^{\frac{(j-1)n}{10}}}.
\end{align}
\\{\bf{Proof of Lemma \ref{upper}.}}
In the event that $G_2$ is not $k$-connected { there is} 
a set $L$ consisting of $k-1$ vertices whose removal { partitions} the rest of the vertices into connected components. The smallest one of those components is of size at most $2^{n-1}$. Therefore
we have that there exists $T,L\subset V$ and $2\leq j\leq 9$ such that $T\in \mathcal{U}^j_{1}(L)$
 and no edge in $E\big(T,V{ \setminus} (T\cup L)\big)$ appears in $E_2'$ i.e.\@ 
 $\big\{e_k(v):v\in T\cap \mathcal{A}_{1} \big\}\cap E\big(T,V{ \setminus} (T\cup L)\big) =\emptyset.$
For a fixed such triple $T,L,j$ let $\mathcal{B}(T,L,j)$ be the event that 
$\big\{e_k(v):v\in T\cap \mathcal{A}_{1} \big\}\cap E\big(T,V{ \setminus} (T\cup L)\big) =\emptyset $. Then
\begin{align}
\mathbb{P}\big( \mathcal{B}(T,L,j) \big)&
\leq \prod_{v\in T\cap \mathcal{A}_{1}}\mathbb{P}\big[ e_k(v) \notin E\big(v,V{ \setminus} T\cup L\big) \big]  
\leq \prod_{v\in T\cap \mathcal{A}_{1}} \bigg[ 1-\frac{d(v,V{ \setminus} T\cup L)}{n-(k-1)} \bigg] \\
&\leq   \prod_{v\in T\cap \mathcal{A}_1} \exp\bigg\{-\frac{d(v,V{ \setminus} T\cup L)}{n-(k-1)}\bigg\}\leq \exp\bigg\{-\frac{1}{n}\underset{v\in T\cap \mathcal{A}_1}{\sum}d(v,V{ \setminus} T\cup L)\Bigg\} \\
&\leq \exp\bigg\{-\frac{1}{n}\bigg(\underset{v\in T}{\sum}d(v,V{ \setminus} T) -n|L|-n|T { \setminus} \mathcal{A}_{1}| \bigg)\Bigg\} 
 \\
&\leq 
\exp\bigg\{-\frac{1}{n} \big(n|T|-|T|\log|T| \big)+ k +2^{\frac{n}{10}}\Bigg\}\\
&\leq \exp\bigg\{-\frac{ 2^{\frac{jn}{10}}}{n} \bigg(n-\log_2 2^{\frac{jn}{10}}\bigg) + k +2^{\frac{n}{10}} \Bigg\}
\leq \exp\bigg\{-\frac{2^{\frac{jn}{10}}}{20n}\bigg\}.
\end{align}
To go from the third to the fourth line we used Lemma \ref{iso} and  that $ |T{ \setminus} \mathcal{A}_{1}|=|T\cap \mathcal{A}_{0}|\leq  2^{\frac{n}{10}}$ (see Lemma \ref{activek-1}).
Thereafter we used Remark \ref{iso2}. 
{ In} the last inequality we  used that $2\leq j \leq 9$. Finally we have,
\begin{align}
\mathbb{P}&\big(G_2\text{ is not k-connected}\big)= 
  \P\big( \exists L,T\subset V\text{ and } 2\leq j\leq 9: \mathcal{B}(T,L,j) \text{ occurs}\big)\\
 &\leq\sum_{j=2}^9 \sum_{L\in \binom{V}{k-1}}\sum_{T\in \mathcal{U}_{1}^{j}(L)}   
  \exp\bigg\{-\frac{2^{\frac{jn}{10}}}{20n}\bigg\}
\leq\sum_{j=2}^9 \sum_{L\in \binom{V}{k-1}} 2^{n2^{\frac{(j-1)n}{10}}}  \exp\bigg\{-\frac{2^{\frac{jn}{10}}}{20n}\bigg\}=o(1). 
\end{align}
Since $G_2$ is distributed has the same distribution with $Q^n(k)$ the statement of Lemma \ref{upper} 
follows. \QEDB
\vspace{3mm}
\\A question that now arises is the following. For $k\geq k_1$ can $Q^n(k)$ be $\ell$-connected, for some $\ell>k$? We answer this question negatively { in our next lemma} with which we close this section.
\begin{lem}
Let $1\leq k\leq n-1$. Then w.h.p.\@ $Q^n(k)$ contains a vertex of degree $k$ { and} hence $Q^n(k)$ is not $(k+1)$-connected.
\end{lem}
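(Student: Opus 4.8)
The plan is a second‑moment argument applied to $N$, the number of vertices of $Q^n(k)$ of degree exactly $k$. First I would record the structural observation that in the $k$‑out model every vertex already has degree at least $k$, and a vertex $v$ has degree exactly $k$ if and only if, among the $n-k$ edges at $v$ that $v$ itself did not select, none is selected by its other endpoint. For a fixed neighbour $w$ of $v$ the probability that $w$ selects the edge $vw$ is $\binom{n-1}{k-1}/\binom nk=k/n$, and for the $n-k$ relevant neighbours these events are mutually independent and independent of $v$'s own selection. Hence the marginal is the same for every vertex:
\begin{align}
\mathbb{P}\big(\deg_{Q^n(k)}(v)=k\big)=p_k:=\Big(\tfrac{n-k}{n}\Big)^{n-k},\qquad\text{so}\qquad \mathbb{E}[N]=2^n p_k.
\end{align}

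The second step is to check that $\mathbb{E}[N]\to\infty$, uniformly over $1\le k\le n-1$; this is the point that needs a little care. Writing $j=n-k\in\{1,\dots,n-1\}$ we have $\log(1/p_k)=j\log(n/j)$, which as a function of $j$ is maximised at $j=n/e$ with value $n/e$. Therefore $p_k\ge e^{-n/e}$ for every admissible $k$, and $\mathbb{E}[N]=2^n p_k\ge 2^{(1-1/(e\ln 2))n}=2^{\Omega(n)}$. One must notice here that $p_k$ itself can be exponentially small (for instance $p_{n/2}=2^{-n/2}$), so the divergence is not automatic; what rescues it is that the exponent of $p_k$ never reaches $1$, so the factor $2^n$ always wins.

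For concentration I would avoid a direct McDiarmid bound, whose Lipschitz constant is of order $n$ and hence too lossy once $\mathbb{E}[N]$ has exponent below $\tfrac12$, and instead use Chebyshev. The event $\{\deg(v)=k\}$ depends only on the edge‑selections made by vertices in the closed neighbourhood $N[v]$, so for $v,v'$ at Hamming distance at least $3$ the two events are independent; thus $\mathbb{E}[N^2]\le(\mathbb{E}[N])^2+\sum_{\mathrm{dist}(v,v')\le 2}\mathbb{P}(\deg v=k)$. Since there are only $2^n\cdot O(n^2)$ ordered pairs at distance at most $2$, each contributing at most $p_k$, we get $\mathrm{Var}(N)\le O(n^2)\,\mathbb{E}[N]$, and Chebyshev's inequality gives $\mathbb{P}(N=0)\le O(n^2)/\mathbb{E}[N]\to 0$ by the previous paragraph. (Equivalently one may fix a binary code $C\subseteq V$ of length $n$ and minimum distance $3$ with $|C|\ge 2^n/(1+n+\binom n2)$, note the events $\{\deg v=k\}_{v\in C}$ are mutually independent, and bound $\mathbb{P}(N=0)\le(1-p_k)^{|C|}\le e^{-p_k|C|}\to 0$.)

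Finally, given a vertex $v$ of degree exactly $k$ — which exists w.h.p.\ by the above — its $k$ neighbours in $Q^n(k)$ form a cut set whose removal isolates $v$, and since $2^n\ge k+2$ this deletion of $k<k+1$ vertices disconnects $Q^n(k)$, so $Q^n(k)$ is not $(k+1)$‑connected. The only genuine obstacle is the uniform exponential lower bound $\mathbb{E}[N]=2^{\Omega(n)}$ of the second paragraph; the remaining steps are routine.
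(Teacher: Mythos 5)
Your proposal is correct and follows essentially the same route as the paper: the same computation $p_k=(1-k/n)^{n-k}$, the same observation that this is minimised at $n-k=n/e$ so that $2^np_k\to\infty$, and the same use of independence of the degree events for vertices at pairwise Hamming distance at least $3$ — your parenthetical code-based bound $(1-p_k)^{|C|}\le e^{-p_k|C|}$ with $|C|=\Omega(2^n/n^2)$ is exactly the paper's argument, and your primary Chebyshev variant is only a cosmetic difference.
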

\begin{proof}
Let $v\in V$. In $Q^n(k)$, $v$ has degree $k$ in the event that the $(n-k)$ edges not selected by $v$  do not belong to $Q^n(k)$. That is those $(n-k)$ edges are not selected by their other endpoint either.  Each of those edges is selected by their other endpoint independently with probability $\frac{k}{n}$. 
Hence
\begin{align}
p_k=\mathbb{P}( v \text{ has degree $k$ in } Q^n(k)
=\bigg(1-\frac{k}{n}\bigg)^{n-k}=\Bigg[\bigg(1-\frac{k}{n}\bigg)^{1-\frac{k}{n}}\Bigg]^n      .
\end{align}
$p_k$ is minimized when $1-\frac{k}{n}=e^{-1}$ thus $p_k\geq e^{-\frac{n}{e}}\geq 0.6^n.$ The degrees of any set of vertices which are at distance at least three from each other are independent. We can greedily select such a set $S$, of size at least $\frac{2^n}{n^2}$, by sequentially  including a { non-deleted} vertex and then deleting all the vertices at distance at most 2 from it. For $v\in V$ let $d_k(v)$ be the degree of $v$ in $Q^n(k)$.
Therefore
\begin{align}
\P\big[Q^n(k) \text{ is $(k+1)$ connected}\big]& \leq \mathbb{P}\big(\nexists v\in S:  d_k(v)=k 
\big)=(1-p_k)^{|S|} \leq (1-p_k)^{\frac{2^n}{n^2}} \\
&\leq e^{- \frac{p_k2^n}{n^2}}\leq e^{- \frac{0.6^n\cdot 2^n}{n^2}}=o(1). \qedhere
\end{align}
\end{proof}

\section{Final Remarks}
In this paper we { have} established the connectivity threshold for the random { subgraph} of the { $n$-cube that is} generated by the $k$-out model. When $k$ is below the threshold { $k_1$} the giant components consists of all but $o(2^n)$ vertices. Furthermore a calculation similar to the one given at the proof of Lemma \ref{notmedium} give us that  when $k$ is below { this} threshold $Q^n(k)$ does not have any components of size in $\big[\frac{2n}{\log_2 n},2^{\frac{n}{5}}\big].$ Hence it would be interesting to investigate { the size of the second largest component}. 

On the other hand, when $k$ is at least { $k_1$} we showed $Q^n(k)$ is far more than just connected, it is $k$-connected. In the proof of the $k$-connectivity we used the following properties of $Q^n(k)$. Let $N=2^n$ then $Q^n$ is a graph on $N$ vertices of maximum degree $\log_2 N$ such that for any partition $S,V{ \setminus} S$ there are at least $|S|(\log_2N-\log_2|S|)$ edges crossing the partition. In addition any two vertices have at most $0.25\log_2 \log_2 N$ common neighbors.
 Therefore by repeating the arguments given in this paper we have the following. Every random subgraph of a graph { on $N$ vertices} that { satisfies} the aforementioned properties { and is} generated by the $k$-out model, where $k\geq k_1$, is $k$-connected.   
An interesting question would therefore be to state more general conditions of { a} similar flavor, such that the random subgraph of a graph that { satisfies these} conditions { and is} generated by the $k$-out model
is $k$-connected (or even { just} connected).

\bibliographystyle{abbrv}
\bibliography{K_ot}

\appendix
\section{Proof of Lemma \ref{bias}}
\begin{proof}
We start by proving that 
\begin{align}\label{appeq1}
\mathbb{P}\bigg(L_\frac{n}{5}\leq \frac{n}{20}\bigg)\leq e^{-10^{-3}n}
\end{align}
Observe that for $i\leq \frac{n}{5}$, $L_i\leq \frac{n}{5}$ hence $\mathbb{P}\big(L_{i+1}=L_i+1\big)\geq 0.8$. In the event that $L_{\frac{n}{5}}\leq \frac{n}{20}$
we have that $|\{i\leq \frac{n}{5}:L_{i+1}\neq L_i+1\}|\geq 0.5 \big(\frac{n}{5}-\frac{n}{20}\big)=\frac{3n}{40}$. Equivalently we have   $|\{i\leq \frac{n}{5}:L_{i+1}= L_i+1\}|\leq \frac{n}{5}-\frac{3n}{40}=\frac{n}{8}.$
Therefore,
\begin{align}
    \mathbb{P}\bigg(L_{\frac{n}{5}}\leq \frac{n}{20}\bigg)&\leq \mathbb{P}\bigg[Bin\bigg(\frac{n}{5},0.8\bigg)\leq \frac{n}{8}\bigg]
    \leq \exp \bigg[ \bigg(\frac{\frac{4}{25}-\frac{1}{8}}{\frac{4}{25}}\bigg)^2\frac{\frac{4}{25}n}{3} \bigg]\leq e^{-10^{-3}n}.
\end{align}
{ In} the second inequality we used Lemma \ref{con1}.
Our second step is to show that for $i\in[n^2]$
\begin{align}\label{appeq2}
\mathbb{P}\bigg(L_{i+\frac{n}{40}}\leq \frac{n}{20}\bigg\vert L_i \geq \frac{n}{20}\bigg)\leq e^{-10^{-3}n}.
\end{align}
Observe that $|L_{i+\frac{n}{40}}- L_i| \leq \frac{n}{40}$. Therefore 
\begin{align*}
    \mathbb{P}\bigg(L_{i+\frac{n}{40}}\leq \frac{n}{20}\bigg\vert L_i > \frac{n}{20}+\frac{n}{40}\bigg)=0.
\end{align*}
On the other hand if $L_i\leq \frac{n}{20}+\frac{n}{40}$ we have that $L_j\leq \frac{n}{20}+\frac{n}{40}+\frac{1n}{40}=\frac{n}{10} $ for every $j\in \big[i,i+\frac{n}{40} \big]  $ hence $\mathbb{P}\big(L_{j+1}=L_j+1\big)\geq \frac{9}{10}$. In the event that $L_{i+\frac{n}{40}}\leq \frac{n}{20}$
we have 
$$\bigg|\bigg\{j\in \bigg[i,i+\frac{n}{40} \bigg] :L_{j+1}\neq L_j+1\bigg\}\bigg|\geq 0.5\cdot \frac{n}{40} $$ 
or equivalently 
$\big|\big\{j\in \big[i,i+\frac{n}{40} \big] :L_{j+1}= L_i+1\big\}\big |\leq \frac{n}{80}.$
Therefore,
\begin{align}
\mathbb{P}\bigg(L_{i+\frac{n}{40}}\leq \frac{n}{20}\bigg\vert L_i \geq \frac{n}{20}\bigg)&\leq
\mathbb{P}\bigg(L_{i+\frac{n}{40}}\leq \frac{n}{20}\bigg\vert \frac{n}{20}\leq L_i \leq \frac{n}{20}+\frac{n}{40}\bigg)
\\&\leq \mathbb{P}\bigg[Bin\bigg(\frac{n}{40},\frac{9}{10} \bigg)\leq \frac{n}{80}\bigg]\leq \exp \bigg[ \bigg(\frac{\frac{9}{400}-\frac{1}{80}}{\frac{9}{400}}\bigg)^2\frac{\frac{9}{400}n}{3} \bigg]\leq e^{-10^{-3}n}.
\end{align}
{ In} the third inequality we once again  used Lemma \ref{con1}.
\vspace{3mm}
In the event that $L_{2i}=0$ for some $i \in \big[\frac{n}{4},n^2\big]$  we have that either $L_{\frac{n}{5}} \leq \frac{n}{20}$ or there exist $\frac{n}{5} \leq i<n^2$ such that $L_{2i-\frac{n}{40}}\geq \frac{n}{20}$ but $L_{2i}\leq \frac {n}{20}$. Hence
\begin{align}
    \mathbb{P}\big(\exists &i\in [n/4,n^2]:  {L_{2i}}=0 \big) \leq n^2\cdot e^{-10^{-3}n}=o(n^{-4}). 
    \qedhere
\end{align}
\end{proof}

\end{document}